% certificates.tex
% Selecting Algorithms for Black Box Matrices
% Author: Wayne Eberly
% Revisions: July 15, 2016

\documentclass[11pt, letterpaper, dvips]{article}
\usepackage{amsmath}
\usepackage{amssymb}
\numberwithin{equation}{section}
\usepackage{amsthm}
\usepackage{url}
\usepackage{geometry}
\allowdisplaybreaks

\newcommand{\matgrp}[3]{\ensuremath{#1^{#2 \times #3}}}
\newcommand{\matring}[2]{\matgrp{#1}{#2}{#2}}
\newcommand{\columnspace}[2]{\matgrp{#1}{#2}{1}}
\newcommand{\rowspace}[2]{\matgrp{#1}{1}{#2}}

\newcommand{\F}{\ensuremath{\textup{\textsf{F}}}}
\newcommand{\K}{\ensuremath{\textup{\textsf{K}}}}

\newcommand{\Z}{\ensuremath{\mathbb{Z}}}

\newtheorem{theorem}{Theorem}[section]
\newtheorem{lemma}[theorem]{Lemma}

\begin{document}
\title{Selecting Algorithms for Black Box Matrices:
  Checking for Matrix Properties That Can Simplify Computations}
\author{Wayne Eberly\\
       Department of Computer Science\\
       University of Calgary\\
       \url{eberly@ucalgary.ca}}
\maketitle

\begin{abstract}
Processes to automate the selection of appropriate algorithms for various matrix computations are described.
In particular, processes to check for, and certify, various matrix properties of black-box matrices are presented.
These include sparsity patterns and structural properties that allow ``superfast'' algorithms to be used in place
of black box algorithms. Matrix properties that hold generically, and allow the use of matrix preconditioning to
be reduced  or eliminated, can also be checked for and certified --- notably including in the small-field case,
where this presently has the greatest impact on the efficiency of the computation.
\end{abstract}

\section{Introduction}
\label{sec:introduction}

\newcommand{\Mult}[1]{\ensuremath{\mathcal{M}(#1)}}

Krylov-based ``black box'' algorithms for matrix computations have been used for significant applications
in computational number theory.  They also form a significant part of the C++ template library
\textsc{LinBox} for high-performance matrix computations. These are notable, in part, because of their
versatility: Any matrix representation that allows the input matrix (or, for some algorithms, its transpose) to
be multiplied by a vector can be supported.

Considerably more efficient algorithms can be used instead if the input matrix is sparse, with nonzero entries
limited to specific locations, or satisfies one of various structural properties. As described, for example, by
Golub and van Loan~\cite{golo12}, Gaussian Elimination can be used quite efficiently to solve banded systems
of linear equations. As described by Pan~\cite{pan01}, various classes of matrices (including Toeplitz-like
and  Hankel-like matrices) have various displacement structures that can be used to reduce system solving
for these matrices to polynomial arithmetic. Under these circumstances, assistance  in selecting algorithms to
be employed might be of help as the community of users of systems like \textsc{LinBox} grows and
non-expert users should be better supported.

Sections~\ref{sec:band_matrix}--\ref{sec:displacement_rank} of this report therefore concern attempts to
detect and certify matrix properties to facilitate algorithm selection. The preliminary results given here
establish that band matrices and matrices with low displacement rank --- including Toeplitz-like and
Hankel-like matrices --- are easily detected and certified. Furthermore, it is possible to convert matrix
representations, in order to allow superfast algorithms to be applied, when such matrices are discovered.

As black-box algorithms have been developed, several matrix properties have been identified that hold
generically and that be exploited --- generally by eliminating ``matrix preconditioning'' --- to simplify or
accelerate computations without sacrificing reliability. In particular, the cost of system solving can be reduced
if the input matrix has a small number of nontrivial nilpotent blocks in its Jordan normal form. Simpler
algorithms to compute the rank or characteristic polynomial of a matrix can be applied if the input
matrix has a small number of nontrivial invariant factors.
Sections~\ref{sec:additive_conditioners}--\ref{sec:invariant_factors} concern the detection and
certification of these properties. A technique of Villard~\cite{vill00} is adapted, for the small-field case, to
efficiently check for these properties at a cost linear in that needed to apply Wiedemann's algorithm to
compute the minimal polynomial of a matrix. Interactive protocols, of the type recently described by Dumas
and Kaltofen~\cite{dum14} are also provided.
\begin{table}
\centering
\begin{tabular}{|l|c|c|c|}
\hline
Property & Detection/Certification & Verification &Communication\\
\hline
Band Matrix & $\mu k$  & $nk + \mu$  & $nk$ \\
Low Displacement Rank & $nk^2 + \mu k \log n$ & $nk + n \log n + \mu$ & $nk + n \log n$ \\
Few Nilpotent Blocks & $n^2 k + \mu n$ & $nk + \mu$ & $nk$ \\
Many Nilpotent Blocks & $n^2 k + \mu n$ & $nk + \mu$ & $nk$ \\
Few Invariant Factors & $n^2 \Mult{n} + n^2 k \log n$ &
  $n\Mult{n} + nk + \mu$ & $nk + n \log n$ \\
  & $\hspace*{50pt} + \mu n \log n$ & & \\
Many Invariant Factors & $n^2 \Mult{n} + n^2 k \log n$ &
     $n \Mult{n} + nk +  \mu$ &  $nk + n \log n$ \\
     & \hspace*{50pt} $+ \mu n \log n$ & & \\
\hline
\end{tabular}
\caption{Summary of Results}
\label{fig:summary}
\end{table}

Of course, many randomized black box algorithms are Las Vegas, so that one can simply execute algorithms
without preconditioning, in hopes that desirable matrix properties are satisfied or that one ``gets lucky''. The
above results may nevertheless be of interest if one considers exchanges between a service provider and client
involving the cost of a service that is to be provided: One would hope here that the cost to the
service provider (or ``prover'') would not exceed the lower cost to carry out a computation without
preconditioning, while the cost to the client (or ``verifier') would be significantly lower than that. Furthermore,
a process to certify that preconditioning is necessary would also be of interest. Protocols to certify this are
also given.

The expected (and, in a few cases, worst-case) costs established to detect and certify these properties,
and for their verification, are linear in the expressions
shown in Table~\ref{fig:summary}. In each case, the indicated cost is the number
of field operations required to carry out the indicated operation
for a black-box matrix $A \in \matring{\F}{n}$. Here, $\mu \le n(2n+1)$ is the number of
operations required to multiply either $A$ or $A^T$ by a given vector $v \in \columnspace{\F}{n}$. It is also
assumed that $\mu \ge n$.  $\Mult{n}$ is the number of operations in~$\F$ required for arithmetic in an
extension $\text{\textsf{E}}$ of~$\F$ with logarithmic degree, so that $\Mult{n} \in O(\log_2 n \log_2 \log_2 n
\log_2 \log_2 \log_2 n)$. The ``communication'' reports the number of elements of the ground field $\F$ (or, in
some cases, bits) that must be communicated between a prover and a verifier --- excluding the initial cost to
communicate a black box matrix~$A \in \matring{\F}{n}$, parameter~$k$ and error tolerance~$\epsilon$.
In typical applications one might expect $k$ to be significantly smaller than~$n$ --- indeed, polylogarithmic.
The cost to check for band structure or low displacement rank, and return the information needed for superfast
algorithms to be applied when they can, is significantly dominated by the cost to use a black box algorithm
to complete a computation instead, in this case.

A conference report~\cite{ebe16}, including the material found in this report but that omits the proof of
Theorem~\ref{thm:probable_relationship}, is also available.

\section{Band Structure}
\label{sec:band_matrix}

Let $A \in \matring{\F}{n}$ and let $k$ be a positive constant. Let us say that $A$ is a \textbf{\emph{band
matrix}} with \textbf{\emph{band width}}~$k$ if the entry $a_{i, j}$ of~$A$ in row~$i$ and column~$j$ is
equal to~$0$ whenever $1 \le i, j \le n$ and $|i - j| > k$. Golub and van Loan~\cite{golo12} describe efficient
algorithms, based on Gaussian Elimination, for computations on such matrices.

As shown below the detection and certification of a black-box matrix that is a band matrix, and conversion to a
representation allowing other algorithms to be used, is surprisingly easy. Indeed, this is included, in part, to
provide a very simple first example.

The objective of this section is to prove the following.

\begin{theorem}
\label{thm:band_matrix_summary}
It is possible to check whether a matrix $A \in \matring{\F}{n}$ is a band matrix, with band width~$k$,  by
selecting $n$ values uniformly and independently from a finite subset~$S$ of~$\F$ and by performing
$\Theta(\mu k)$ arithmetic operations over~$\F$. If $A$ is, indeed, a band matrix, then this is confirmed with
certainty. Otherwise the probability that $A$ is mistaken for a band matrix is at most $1/|S|$.

A certificate with size $\Theta(nk)$ --- which also allows algorithms for band matrix computations to be applied
to~$A$ --- can be supplied when $A$ is a band matrix. This certificate can be verified by selecting $n$ entries
uniformly and independently from a set~$S$, as above, and using $\Theta(nk + \mu)$ arithmetic operations
over~$\F$. Once again if $A$ is, indeed, a band matrix and the certificate is correct, then it is accepted with
certainty. If the certificate is incorrect then it is accepted with probability at most $1/|S|$.
\end{theorem}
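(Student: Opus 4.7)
The plan is to build a candidate band matrix $B$ that necessarily agrees with $A$ inside the band (whenever $A$ itself is banded) and then to decide whether $A = B$ by a single randomized equality check. For the construction, I would form, for each residue $g \in \{0, 1, \ldots, 2k\}$, the $0/1$ vector $v_g \in \columnspace{\F}{n}$ whose nonzero entries are at column indices congruent to $g$ modulo $2k+1$, and apply the black box to each $v_g$. Since the band of any single row of $A$ occupies at most $2k+1$ consecutive columns, each residue class contains at most one in-band column per row, so the coordinate $(A v_g)_i$ reveals the would-be entry $B_{i,j}$ at the unique in-band column $j$ of residue $g$ (and the remaining positions of $B$ are set to zero). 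This stage uses $2k+1$ applications of the black box plus $\Theta(nk)$ additional work to assemble $B$, i.e., $\Theta(\mu k)$ operations total, and it produces a $B$ that is banded by construction and satisfies $B = A$ whenever $A$ is a band matrix.

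The second stage draws $v \in \columnspace{\F}{n}$ with entries chosen uniformly and independently from $S$, computes $Av$ by one further application of the black box and $Bv$ directly from the banded representation in $\Theta(nk)$ operations, and declares $A$ to be banded precisely when $Av = Bv$. If $A$ is indeed banded, then $B = A$ and equality holds with certainty. If $A$ is not banded, then $B \neq A$ (since $B$ is banded by construction), so some row of $A - B$ is nonzero and the corresponding coordinate of $(A - B)v$ is a nonzero linear form in the $n$ independent samples from $S$; by the Schwartz--Zippel bound this form evaluates to zero with probability at most $1/|S|$, so the claimed soundness follows. Combining the two stages gives total cost $\Theta(\mu k)$ using $n$ samples from $S$ (invoking $\mu \ge n$ to absorb the lower-order terms), matching the bound in the theorem.

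For the certificate version, the prover simply transmits $B$ in its $\Theta(nk)$-element banded representation; the verifier checks (trivially, from the representation) that $B$ really has band width $k$, draws $v \in \columnspace{\F}{n}$ with entries from $S$, and runs the same equality test $Av = Bv$ at cost $\Theta(nk + \mu)$. The soundness and completeness analysis is identical to the preceding paragraph, the only difference being that a maliciously wrong $B$ supplied by the prover is caught by the same argument as a non-band $A$. The main point requiring care is the reconstruction step, specifically the combinatorial observation that a spacing of exactly $2k+1$ ensures every in-band entry of $A$ is read off by exactly one probe $v_g$ without contamination from other in-band entries of the same row; once that is in place, the randomized check and its Schwartz--Zippel analysis are routine, so I do not expect a serious obstacle beyond book-keeping the boundary rows where the band is truncated.
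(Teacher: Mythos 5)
Your proposal is correct and follows essentially the same route as the paper: probe $A$ with the $2k+1$ residue-class indicator vectors to read off a candidate band matrix, then settle $A=\widehat{A}$ by a Frievalds-style random check, with the verifier repeating that check against the transmitted $\Theta(nk)$ band representation. Your handling of the boundary columns (simply zeroing out-of-band readings and letting the equality test catch any discrepancy) is a slight streamlining of the paper's explicit check that a consistent band matrix exists, but the argument and costs are otherwise the same.
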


\subsection{Detection and Certification}
\label{ssec:band_matrix_discovery}

Let $K = 2k+1$ and, for $1 \le i \le K$, let $\alpha_{K, i} \in \columnspace{\F}{n}$ such that, for $1 \le j \le n$,
the $j^{\text{th}}$ entry of~$\alpha_{K, i}$ is equal to one if $j \equiv i \bmod{K}$ and is zero otherwise.
If $A$ has band width~$k$ then no two (or more) of columns $i, i+K, i+2K, \dots$ of~$A$ have
nonzero entries in the same row. The nonzero entries of these columns can therefore simply be read off as
entries of the vector $A \cdot \alpha_{K, i}$ --- and all of the nonzero entries of~$A$ can be read off the
nonzero entries of each of the vectors $A \cdot \alpha_{K, 1}, A \cdot \alpha_{K, 2}, \dots, A \cdot \alpha_{K, K}$. In particular, if $1 \le j \le n$, then the $j^{\text{th}}$
column of~$A$ can only have nonzero entries in rows $s, s+1, s+2, \dots, t$,
where $s = \max(j-k, 0)$ and $t = \min(j+k, n)$ --- and these entries are the entries
of the vector $A \alpha_{K, j \bmod{K}}$ in positions $s, s+1, s+2, \dots, t$.

Consequently, $K = 2k+1$ multiplications of~$A$ by vectors suffice for the prover  to produce a representation
of $A$ as a band matrix with band width~$k$ if it indeed has this structure.

Of course, this should not be delivered to a verifier without being checked. It is possible that there is no band
matrix $\widehat{A} \in \matring{\F}{n}$, with band width~$k$, such that $A \alpha_{K, i} = \widehat{A}
\alpha_{K, i}$ for $1 \le i \le K$ --- for it may be necessary for a matrix to have off-band entries in some of
columns $k+1, k+2, \dots, K$ or $n-K, n-K+1, \dots, n-k-1$ in order for it to satisfy these equations. In particular
(when $K \le n$), this is the case if the top entry of $A \alpha_i$
is nonzero for any integer~$i$ such that $k+1 \le i \le K$, or if the bottom entry
of~$A \alpha_i$ is nonzero for any integer~$i$ such that $1 \le i \le K$ and
$i \in \{ n-K+1 \bmod{K}, n-K+2 \bmod{K}, \dots, n-k-1 \bmod{k} \}$.

Otherwise the band matrix~$\widehat{A}$ that satisfies these conditions is unique and it suffices to check that
$A = \widehat{A}$. An application of the test of Frievalds~\cite{frievalds79} suffices to check this: A
vector $x \in \columnspace{\F}{n}$, whose entries are chosen uniformly and independently from a finite
subset~$S$ of~$\F$, should be selected by the prover, and it should be checked whether $Ax = \widehat{A} x$.
If this is not the case then $A \not= \widehat{A}$ and, once again, one should stop.

On the other hand, it is easily checked that if $A \not= \widehat{A}$, and $x$ is chosen as above, then the
probability that $Ax = \widehat{A} x$ is at most $1/|S|$ --- so that, provided that $S$ is sufficiently large, the
prover should deliver a certificate so that a verification stage can proceed.

\subsection{Verification}
\label{ssec:band_matrix_verification}

The certificate provided to the verifier, at this point, should simply be a representation of~$A$ as a band
matrix --- presumably provided as an $n \times (2k+1)$ array reporting the entries within the bands of~$A$.

This can be verified using an independent repetition of the Frievalds test described above.

Since the product of a band matrix $A \in \matring{\F}{n}$ (with band width~$k$) and a vector $x \in
\columnspace{\F}{n}$ can be computed using $\Theta(nk)$ field operations and zero tests,
Theorem~\ref{thm:band_matrix_summary} is now immediate --- assuming, again, that $\mu \ge n$.

\section{Low Matrix Rank}
\label{sec:rank}

The following is less general than the protocol of Dumas and Kaltofen~\cite{dum14} to certify matrix rank
and, therefore,  inferior in at least one significant respect. However, it can be used in the special case needed
here: One is certifying that the rank of $A \in \matring{\F}{n}$ is at most~$k$, when $k$ is significantly smaller
than~$n$. It also includes the construction of an alternative representation of~$A$ as needed to support the
claims in Section~\ref{sec:displacement_rank}.

Suppose now that $A \in \matring{\F}{n}$ has positive rank $r \le k$. Then there exist permutation matrices
$P,  Q \in \matring{\F}{n}$, matrices $L \in \matgrp{\F}{(n-r)}{r}$ and $R \in \matgrp{\F}{r}{(n-r)}$, and a
nonsingular matrix $C \in \matring{\F}{r}$, such that
\begin{equation}
\label{eq:low_rank_decomposition}
A = P \times \begin{bmatrix} I_r \\ L \end{bmatrix} \times C \times \begin{bmatrix} I_r & R \end{bmatrix} \times Q.
\end{equation}

The objective of this section is to establish the following.

\begin{lemma}
\label{lem:low_rank_summary}
It is possible to check whether a matrix $A \in \matring{\F}{n}$ has rank at most~$k$, by selecting
$\Theta(nk)$ values uniformly and independently from a finite subset~$S$ of~$\F$ and performing
$(n k^2 + \mu k \log n)$ arithmetic operations in~$\F$ and $\Theta(n \log_2 n)$ operations on bits. This
process fails with probability at most \mbox{$(\min(r, k)+1)/|S|$}, where $r$ is the rank of~$A$, and only by
returning an estimate of the rank of~$A$ that is too low.

If $A$ has rank at most~$k$ then a decomposition of~$A$, as shown at
line~\eqref{eq:low_rank_decomposition}, can be computed at the above cost and returned as a certificate.
This certificate can be verified by choosing $n$ values uniformly and independently from a finite subset~$S$
of~$\F$ and performing $\Theta(nk + \mu)$ arithmetic operations in~$\F$ and $\Theta(n \log_2 n)$ operations
on bits. If the certificate is correct then it is accepted with certainty. Otherwise it is accepted with probability
at most $1/|S|$.
\end{lemma}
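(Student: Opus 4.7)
The plan has four stages: (i)~use the black box on $O(k)$ random vectors to identify, with high probability, indices of $r$ linearly independent rows of $A$; (ii)~apply the black box to standard basis vectors at those indices to recover the pivot rows of $A$ and, from them, a set of pivot columns together with the $r\times r$ nonsingular submatrix $C$ of $A$ at their intersection; (iii)~assemble the decomposition~\eqref{eq:low_rank_decomposition}; (iv)~furnish both the prover's final self-check and the verifier's protocol with a Freivalds-style matrix-vector test.

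For stage~(i) I would draw a matrix $V\in\matgrp{\F}{n}{k}$ with entries chosen uniformly and independently from $S$ and form $B=AV$ via $k$ black-box applications of $A$, at cost $\mu k$. A routine Schwartz--Zippel argument applied to the $\min(r,k)\times\min(r,k)$ minors of $B$ shows that $\rank B=\min(r,k)$ except with probability at most $\min(r,k)/|S|$, while $\rank B\le r$ holds deterministically: thus the algorithm can only \emph{underestimate} the rank, as the lemma requires. Gaussian elimination on the $n\times k$ matrix $B$ returns the estimate $r'\le k$ and pivot-row indices $i_1,\ldots,i_{r'}$ of $B$ (equivalently, of $A$) at cost $\Theta(nk^2)$ field operations plus $\Theta(n\log_2 n)$ bit operations for index bookkeeping. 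For stage~(ii), apply $A^T$ to $e_{i_1},\ldots,e_{i_{r'}}$ (cost $\mu r'$) to obtain the pivot rows of $A$ as full vectors, and do Gaussian elimination on these to identify pivot-column indices $j_1,\ldots,j_{r'}$; extract the submatrix $C$ of $A$ at the chosen rows and columns and the block $C_{12}$ of the pivot rows at non-pivot columns. Applying $A$ to $e_{j_1},\ldots,e_{j_{r'}}$ (cost $\mu r'$) yields the pivot columns and hence the block $C_{21}$.

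Stage~(iii) is purely algebraic: when $r'=r$ the matrix $C$ is nonsingular, so form $L=C_{21}C^{-1}$ and $R=C^{-1}C_{12}$, and let $P,Q$ be the permutations bringing the pivot rows and columns to the top-left of $A$. The arithmetic is dominated by the eliminations and by $C^{-1}$, staying within $nk^2+\mu k$ field operations; any $\log n$ factor in the black-box cost claimed in the lemma is absorbed by up to $O(\log n)$-fold amplification of whichever substep requires extra confidence (for example, recomputing the pivot search should the candidate $C$ turn out to be singular), without exceeding the sample budget $\Theta(nk)$. Stage~(iv) is a Freivalds check: draw $x\in S^n$ with entries from $S$, compute $Ax$ through the black box and the candidate $P\begin{bmatrix}I_{r'}\\L\end{bmatrix}C\begin{bmatrix}I_{r'}&R\end{bmatrix}Qx$ directly from the explicit data, and reject on disagreement. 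One such check is carried out by the prover before transmission and a second, independent one by the verifier on receipt, each at cost $\mu+\Theta(nk)$ field operations; acceptance when the decomposition is incorrect has probability at most $1/|S|$ per check, contributing the remaining $1/|S|$ term in the stated failure bound.

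The main obstacle is the probability analysis. The Freivalds step is essential to rule out the case $r>k$ (in which no valid decomposition of the stated form exists, but the algorithm might still produce a plausible-looking one from a rank-deficient $B$); its contribution must combine cleanly with the Schwartz--Zippel bound from stage~(i) to produce the asymmetric bound $(\min(r,k)+1)/|S|$ under which ``failure'' is confined to underestimating the rank. Once this is set out, the remaining accounting of field operations, random samples and bit operations follows directly from the construction.
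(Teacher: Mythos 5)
Your proposal is correct, but it takes a genuinely different route from the paper. The paper builds the decomposition adaptively: it maintains a growing nonsingular submatrix $C_\ell$ together with $C_\ell^{-1}$ (updated by a Schur-complement formula in $\Theta(\ell^2)$ operations), and at each stage uses a fresh random vector plus a binary search over its support --- roughly $\log_2 n$ black-box products per stage --- to locate a new column of $A$ outside the current column span; this is the source of the $\mu k \log n$ term and of the per-stage $1/|S|$ failure events that sum to $(\min(r,k)+1)/|S|$. You instead use a one-shot random compression $B=AV$ with a single Schwartz--Zippel event of probability $\min(r,k)/|S|$, recover explicit pivot rows and columns by applying $A^T$ and $A$ to unit vectors, and rely on a final Freivalds self-check; this avoids the $\log n$ factor entirely (only $O(k)$ black-box products), at the price of using the transpose black box --- which is legitimate here, since the paper's cost model charges $\mu$ for multiplication by either $A$ or $A^T$, and indeed the paper itself implicitly needs rows of $A$ to form $A_R$. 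The probability analysis you flag as the ``main obstacle'' does combine cleanly: conditioned on $\operatorname{rank}(AV)=\min(r,k)$, your construction is exact whenever $r\le k$ (so no further error), while if $r>k$ the candidate has rank $k<r$, so a Freivalds disagreement \emph{certifies} $\operatorname{rank}(A)>k$ (disagreement proves $A\neq\widehat{A}$, and $\operatorname{rank}(A)\ge k$ is already certain from the $k$ independent rows), and a missed disagreement, probability at most $1/|S|$, yields only the underestimate $k$; a union bound then gives exactly $(\min(r,k)+1)/|S|$ with failures confined to underestimating the rank. The verification clause is handled identically in both approaches (an independent Freivalds test at cost $\Theta(nk+\mu)$), so your argument establishes the lemma, and in fact with a slightly smaller detection cost than the stated $nk^2+\mu k\log n$.
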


\subsection{Detection and Certification}
\label{ssec:low_rank_detection}

Since the rank of~$A$ cannot exceed that of~$C$, no decomposition as shown at
line~\eqref{eq:low_rank_decomposition} can exist unless the rank of~$A$ is at most~$k$. A prover can
check for this condition by attempting to construct the matrices included in this decomposition, along
with~$C^{-1}$.

Let $0 \le \ell \le k$ and suppose indices  $i_1, i_2, \dots, i_{\ell}$ of rows and $j_1, j_2, \dots, j_{\ell}$
of columns of an $\ell \times \ell$ nonsingular submatrix~$C_{\ell}$ of~$A$ have been computed, along with
the matrix $C_{\ell}^{-1}$.

If $\ell = 0$ then the prover should begin by generating $n$ values uniformly and independently from a finite
subset~$S$ of~$\F$ and using these as the entries of a vector $x \in \columnspace{\F}{n}$. If $A$ is nonzero
then $A x \not= 0$ with probability at least $1 - 1/|S|$ --- so that (if $|S|$ is sufficiently large) the prover may conclude
that the rank of~$A$ is zero, if $Ax=0$, and proceed to delivery of a certificate.

Otherwise $x$ should be used to locate a nonzero column of~$A$. Suppose that $x$ has $h$~nonzero
entries. Set $x_1, x_2 \in \columnspace{\F}{m}$ such that $x_1$ has $\lceil h/2 \rceil$ nonzero entries, $x_2$
has $\lfloor h/2 \rfloor$ nonzero entries, and $x = x_1 + x_2$. One should check whether $A x_1 \not= 0$ ---
replacing~$x$ with $x_1$ if this is the case, and replacing $x$ with~$x_2$ otherwise, since
$ A x_2 = A x \not= 0$ in this second case. Iterating this process at most $\lceil \log_2 n \rceil$ times, a
vector~$x \in \columnspace{\F}{n}$ such that $Ax \not= 0$, and $x$ has a single nonzero entry in
some position~$j_1$, is obtained --- establishing that the $j_1^{\text{th}}$ column of~$A$ is nonzero. This
column has now been computed, as $Ax$, and $i_1$ can be chosen to be any integer such that
$1 \le i_1 \le n$ and the entry $\alpha$ of~$A$ in row~$i_1$ and column~$j_1$ is nonzero. Now
\[
 C_1 = \begin{bmatrix} \alpha \end{bmatrix} \quad \text{and} \quad C_1^{-1} = \begin{bmatrix} \alpha^{-1} \end{bmatrix}.
\]

If $\ell > 0$ then the prover should begin, once again, by forming the vector~$x$ as described above.
The prover should continue by computing the matrix-vector product $v = Ax$, and setting $y \in
\columnspace{\F}{\ell}$ to be the vector such that, for $1 \le h \le \ell$, the entry of~$y$ in position~$h$ is the
entry of~$v$ in position $i_h$.

The vector $z = C_{\ell}^{-1} y \in \columnspace{\F}{\ell}$ should next be computed. Let $w \in
\columnspace{\F}{n}$ be the vector such that, for $1 \le h \le \ell$, the entry of~$w$ in position~$j_h$ is the
entry of~$z$ in position~$h$, and such that all other entries of~$w$ are zero. Finally, set $u = v - A \cdot w$ ---
noting that $v$ is in the space spanned by columns $j_1, j_2, \dots, j_{\ell}$ of~$A$ if and only if $u = 0$.

If the rank of~$A$ is equal to~$\ell$ then $u$ must always be equal to zero; $u$ is nonzero
with probability at least $1 - 1/|S|$ otherwise. Consequently if $u = 0$ then the prover should proceed with the
completion of a certificate, as described below.

Otherwise, if $x$ has $h$ nonzero entries then one should once again set $x_1, x_2 \in \columnspace{\F}{n}$
such that $x_1$ has $\lceil h/2 \rceil$ entries, $x_2$ has $\lfloor h/2 \rfloor$ entries, and $x = x_1 + x_2$.
The above process should be applied to~$x_1$ (instead of~$x$) to check whether $A x_1$ is in the space
spanned by columns $j_1, j_2, \dots, j_{\ell}$ of~$A$ --- replacing $x$ with~$x_1$ if this is not the case, and
replacing $x$ with~$x_2$ otherwise. Iterating this process at most $\lceil \log_2 n \rceil$ times one eventually
obtains a vector  $x \in \columnspace{\F}{n}$ such that $Ax$ is not in the space spanned by columns
$j_1, j_2, \dots, j_{\ell}$ of~$A$ and $x$ has a single nonzero entry in some position $j_{\ell + 1}$. This
establishes that the $j_{\ell+1}^{\text{th}}$ column of~$A$ is not in the space spanned by columns $j_1, j_2,
\dots, j_{\ell}$ --- and that columns $j_1, j_2, \dots, j_{\ell+1}$ of~$A$ are linearly independent.

One should next compute the vector $u \in \columnspace{\F}{n}$, as described above, corresponding to the
final choice of the vector~$x$ --- so that $u \not= 0$. It suffices to choose $i_{\ell+1}$ such that $1 \le
i_{\ell + 1} \le n$ and the $i_{\ell+1}^{\text{th}}$ entry of~$u$ is nonzero in order to ensure that the submatrix
$C_{\ell +1}$ of~$A$, including entries in rows $i_1, i_2, \dots, i_{\ell+1}$ and columns $j_1, j_2, \dots,
j_{\ell + 1}$, is nonsingular.

Note next that
\[
 C_{\ell+1} = \begin{bmatrix} C_{\ell} & s \\ t & \alpha \end{bmatrix}
\]
for vectors $s \in \columnspace{\F}{\ell}$ and $t \in \matgrp{\F}{1}{\ell}$, and for some value $\alpha \in \F$.
Since $C_{\ell}$ is nonsingular,
\[
 C_{\ell+1} = \begin{bmatrix} I_{\ell} & 0 \\ t C_{\ell}^{-1} & 1 \end{bmatrix} \cdot
  \begin{bmatrix} C_{\ell} & 0 \\ 0 & \beta \end{bmatrix} \cdot
  \begin{bmatrix} I_{\ell} & C_{\ell}^{-1} s \\ 0 & 1 \end{bmatrix}
\]
where $\beta = \alpha - t C_{\ell}^{-1} s$. Now $\beta \not= 0$, since $C_{\ell+1}$ is also nonsingular, and
\begin{align*}
 C_{\ell+1}^{-1} &= \begin{bmatrix} I_{\ell} & -C_{\ell}^{-1} s \\ 0 & 1 \end{bmatrix} \cdot
  \begin{bmatrix} C_{\ell}^{-1} & 0 \\ 0 & \beta^{-1} \end{bmatrix} \cdot
  \begin{bmatrix} I_{\ell} & 0 \\ -t C_{\ell}^{-1} & 1 \end{bmatrix} \\
  &= \begin{bmatrix} C_{\ell}^{-1} + \left( C_{\ell}^{-1} s \right) \cdot \left( \beta^{-1} t C_{\ell}^{-1} \right) & - C_{\ell}^{-1} s \beta^{-1} \\
    - \beta^{-1} t C_{\ell}^{-1} & \beta^{-1}
  \end{bmatrix}.
\end{align*}
Since $C_{\ell}^{-1} s \in \columnspace{\F}{\ell}$ and $\beta^{-1} t C_{\ell}^{-1} \in \matgrp{\F}{1}{\ell}$, this
expression for~$C_{\ell+1}^{-1}$ can be used to compute the entries of~$C_{\ell+1}^{-1}$ using
$\Theta(\ell^2)$ operations in~$\F$. The value~$\ell$ can now be incremented and the above process
repeated.

If this process is iterated until $\ell = k+1$, then the rank of~$A$ is greater than~$k$ and one can stop.
Otherwise the rank $r \le k$ of~$A$ has been obtained, along with the matrix $C = C_r \in \matring{\F}{r}$
shown at line~\eqref{eq:low_rank_decomposition}, and the indices of the rows and columns of this matrix
in~$A$.

The permutation matrices~$P$ and~$Q$, shown at line~\eqref{eq:low_rank_decomposition}, can each be
concisely represented as an integer vector, with length~$n$, whose $i^{\text{th}}$ entry is the index of the
nonzero entry in row~$i$ of the permutation matrix. Since the first $r$~entries of this representation of~$Q$
are the indices $j_1, j_2, \dots, j_r$, it is not difficult to compute this representation of~$Q$ using $O(n)$
operations on integers whose binary representations have length~$O(\log n)$: The only operations required
are the initialization of these vectors, comparisons of integers and assignments of values. If a first array is
initially sorted and a second integer array is used to maintain the locations of each of $1, 2, \dots, n$ in the
initial array then one can reorder $1, 2, \dots, n$ in order to obtain this representation of~$Q$ using $O(r)$
exchanges of values in this array. The second array, mentioned above, is then a representation of~$Q^T$.
Since the initial entries of a representation of~$P^T$ are the indices
$i_1, i_2, \dots, i_r$, a representation of~$P^T$ can be computed in the same way using $O(n)$ operations
on integers with length in~$O(\log n)$. A representation of $(P^T)^T = P$ is also obtained as a result
of this process.

It remains only to notice that if $A_L \in \matgrp{\F}{n}{r}$ is the matrix including columns $j_1, j_2, \dots, j_r$
of~$A$, and $A_R \in \matgrp{\F}{r}{n}$ is the matrix including rows $i_1, i_2, \dots, i_r$ of~$A$, then 
\[
 \begin{bmatrix}
   I_r \\ L
 \end{bmatrix}
 = P^T \cdot A_L \cdot C^{-1}
 \qquad \text{and} \qquad
 \begin{bmatrix}
   I_r & R
 \end{bmatrix}
 = C^{-1} \cdot A_R \cdot Q^T.
\]
Since $C^{-1}$ has already been computed, $L$ and~$R$ can be computed using $O(n r^2)$ additional
arithmetic operations in~$\F$ and $\Theta(n \log_2 n)$ operations on bits.

A consideration of the above confirms that  $\Theta(nk^2 + k \mu \log n)$ arithmetic operations in~$\F$ and 
$\Theta(n \log_2 n)$ operations on bits have been used, in the worst case, to check whether the rank of~$A$
is at most~$k$, and to compute the rank and the decomposition at line~\eqref{eq:low_rank_decomposition}
if this is the case. This process can only fail due to unlucky choices of the randomly selected vectors
$x \in \columnspace{\F}{n}$, described above. Since each selection fails with probability at most $1/|S|$ and
at most $\min(r, k)+1$ such vectors must be selected if $A$ has rank~$r$, the total probability of failure is
at most $(r+1)/|S|$ if $r \le k$ and at most $(k+1)/|S|$ otherwise.

\subsection{Verification}
\label{ssec:low_rank_verification}

Once again, it suffices to apply the Frievalds test to verify that $A$ is the zero matrix, if the reported rank is
zero, or that the decomposition of~$A$, shown at line~\eqref{eq:low_rank_decomposition}, is correct
otherwise. An examination of this decomposition confirms that this test can be carried out at the cost stated
in the above lemma.

\section{Low Displacement Rank}
\label{sec:displacement_rank}

For $\alpha \in \F$, the $n \times n$ \textbf{\emph{$\boldsymbol{\alpha}$-circulant matrix $\boldsymbol{Z_{\alpha}}$}} is the matrix
\[
 Z_{\alpha} = \begin{bmatrix}
  0 &      &         &     & \alpha \\
  1 & 0 &           &     &   \\
     & 1 &           &     &   \\
     &    & \ddots &    &   \\
     &     &           & 0 &   \\
     &     &           & 1 & 0
  \end{bmatrix} \in \matring{\F}{n}
\]
whose entry in row~$i+1$ and column~$i$ is~$1$ for $1 \le i \le n-1$, whose entry in row~$1$ and column~$n$
is~$\alpha$, and all of whose other entries are zero. Consider the following linear operators on matrices
in~$\matring{\F}{n}$:
\begin{itemize}
\setlength{\itemsep}{0pt}
\item $\varphi_T(A) = Z_1 \cdot A - A \cdot Z_0$.
\item $\varphi_H(A)  = Z_1 \cdot A - A \cdot Z_0^T$.
\item $\varphi_{TH}(A) = (Z_0 + Z_0^T) \cdot A - A \cdot (Z_0 + Z_0^T)$.
\end{itemize}
A matrix $A \in \matring{\F}{n}$ is \textbf{\emph{Toeplitz-like}} (respectively, \textbf{\emph{Hankel-like}}, and
\textbf{\emph{Toeplitz+Hankel-like}}) if the rank of the matrix $\varphi_T(A)$ (respectively, $\varphi_H(A)$, and
$\varphi_{TH}(A)$) is small relative to~$n$. The matrix $\varphi_T(A)$ (respectively, $\varphi_H(A)$
or $\varphi_{TH}(A)$) is called the \textbf{\emph{operator matrix}} and rank of this matrix is said to be the
\textbf{\emph{displacement rank}} of~$A$. As described, for example, by Pan~\cite{pan01}, a variety of matrix
computations have ``superfast algorithms'' if the displacement rank of a matrix is low. Indeed, if the
displacement rank is polylogarithmic in~$n$ then the worst-case running times of these algorithms are
generally within a polylogarithmic factor of linear in~$n$.

A black box for multiplication of $\varphi_T(A)$ (respectively, $\varphi_H(A)$ or $\varphi_{TH}(A)$)  by a vector
is trivially
obtained by applying a black box for multiplication of~$A$ by a vector, twice, and performing $O(n)$ additional
operations in~$\F$. The following, is therefore, immediate from Lemma~\ref{lem:low_rank_summary}.

\begin{theorem}
\label{thm:displacement_rank_summary}
One can check whether a matrix $A \in \matring{\F}{n}$ is Toeplitz-like, Hankel-like, or
Toeplitz+Hankel-like, with displacement rank at most~$k$, and return a representation of the operator matrix
of~$A$ allowing a superfast algorithm to be applied to~$A$ if this is the case.

The cost to check for these properties, produce and return the above representation of the operator matrix,
and verify it --- and the probabilities and types of failures of these processes --- are as described in
Lemma~\ref{lem:low_rank_summary} for the detection, certification and verification of a matrix with
low rank.
\end{theorem}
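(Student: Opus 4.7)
The plan is to obtain Theorem~\ref{thm:displacement_rank_summary} as a direct consequence of Lemma~\ref{lem:low_rank_summary} applied to the operator matrix. First I would observe that for any $\alpha \in \F$, each of the matrices $Z_\alpha$ and $Z_\alpha^T$ has at most $n$ nonzero entries, so the products $Z_\alpha v$ and $Z_\alpha^T v$ can each be computed using $O(n)$ arithmetic operations in~$\F$ for any $v \in \columnspace{\F}{n}$. Consequently, for each operator $\varphi_\bullet \in \{\varphi_T, \varphi_H, \varphi_{TH}\}$, the product $\varphi_\bullet(A) \cdot v$ can be formed by performing two multiplications of~$A$ by a vector together with $O(n)$ additional operations; crucially, no access to $A^T$ is required, since the only transposed matrices that appear act on circulants ($Z_0^T$ in $\varphi_H$ and $\varphi_{TH}$) rather than on~$A$.

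Combining the existing black box for $A$ with the $O(n)$-cost action of the circulants therefore simulates a black box for the operator matrix at cost $\mu' = 2\mu + O(n) \in O(\mu)$, using the assumption $\mu \ge n$. Lemma~\ref{lem:low_rank_summary} can then be invoked directly on this simulated black box to check whether $\rank{\varphi_\bullet(A)} \le k$ and, if so, to produce a decomposition of $\varphi_\bullet(A)$ of the form shown at line~\eqref{eq:low_rank_decomposition}. Substituting $\mu'$ for the parameter~$\mu$ in the cost bounds of Lemma~\ref{lem:low_rank_summary} leaves the asymptotic costs, randomness requirements, and failure probabilities unchanged, yielding precisely the bounds claimed in Theorem~\ref{thm:displacement_rank_summary}. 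The certificate returned is the compact rank-revealing factorization of the operator matrix, which is exactly the form of input consumed by the superfast algorithms described by Pan~\cite{pan01} for Toeplitz-like, Hankel-like, and Toeplitz+Hankel-like matrices, so once the certificate is accepted the corresponding superfast algorithm can immediately be applied to~$A$. Verification itself proceeds by the Frievalds test on the claimed decomposition of $\varphi_\bullet(A)$, using the simulated black box above, at the cost $\Theta(nk + \mu)$ stated in the lemma.

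No real obstacle is expected here, since the theorem is essentially a corollary of Lemma~\ref{lem:low_rank_summary}. The only point that merits a careful look is verifying that the simulation of the operator matrix can indeed be performed using only the existing black box for multiplication of $A$ by a vector together with $O(n)$ auxiliary operations --- and that this holds uniformly for all three operators $\varphi_T$, $\varphi_H$, and $\varphi_{TH}$ --- which is immediate from the sparsity of the circulants $Z_\alpha$ and their transposes, as noted above.
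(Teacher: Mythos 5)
Your proposal is correct and follows the paper's own argument: the paper likewise observes that a black box for the operator matrix is obtained by two applications of the black box for $A$ plus $O(n)$ extra operations (the circulants being sparse, and no access to $A^T$ being needed), and then invokes Lemma~\ref{lem:low_rank_summary} directly. Nothing further is required.
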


\section{Additive Conditioners}
\label{sec:additive_conditioners}

Recall that the \textbf{\emph{invariant factors}} of a matrix $A \in \matring{\F}{n}$ are monic polynomials
\[ \varphi_1, \varphi_2, \dots, \varphi_m \in \F[z], \] each with positive degree, such that $\varphi_i$ is divisible
by~$\varphi_{i+1}$ for $1 \le i \le m-1$ and such that $A$ is similar to a block diagonal matrix with the companion
matrices of the polynomials $\varphi_1, \varphi_2, \dots, \varphi_m$ as its blocks. In this case $\varphi_1$ is
the minimal polynomial of~$A$.  An invariant factor $\varphi_i$ is a \textbf{\emph{nontrivial invariant factor}} if
$\varphi_i \not= z$ --- for its companion matrix is different from the $1 \times 1$ zero matrix in this case.
Additional (trivial) ``invariant factors" $\varphi_i = 1$ will occasionally be added, below, for $m+1 \le i \le n$,
to simplify technical statements.

The number of invariant factors divisible by~$z^2$ is of interest because this is the same as the number of
``nontrivial nilpotent blocks'' (companion matrices of polynomials $z^j$ for $j \ge 2$) in a Jordan normal form
for~$A$.

Techniques of Villard~\cite{vill00}  that were developed during the study of a black box algorithm for the
Frobenius normal form  lead to an efficient interactive protocol to bound the number of nontrivial nilpotent blocks.
In combination with a recent protocol of Dumas, Kaltofen, Thom\'{e} and Villard~\cite{dum16} for the certification
of the minimal polynomial of a matrix, these lead to an efficient protocol to bound the number of nontrivial
invariant factors of a matrix as well.

In particular, the following result of Villard~\cite[Lemma~1]{vill00} is of use here.

\begin{theorem}[Villard~\cite{vill00}]
\label{thm:certain_relationship}
Let $A, B \in \matring{\F}{n}$ such that the rank of~$B$ is at most~$k$. If $s_1, s_2, \dots s_n$ are the invariant
factors of~$A$ and $\sigma_1, \sigma_2, \dots, \sigma_n$ are the invariant factors of \mbox{$A + B$} then $s_i$
is divisible by~$\sigma_{i+k}$ in~$\F[z]$ for $1 \le i \le n-k$.
\end{theorem}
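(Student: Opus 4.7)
The plan is to reduce to the rank-one case by induction on $k$. The base case $k = 0$ is trivial, since $B = 0$ and the invariant factors of $A + B$ coincide with those of $A$. For the inductive step, write $B = u v^T + B'$ where $u, v \in \F^n$ and $\rank{B'} \le k - 1$, and set $A' = A + u v^T$ so that $A + B = A' + B'$. The inductive hypothesis applied to the pair $(A', B')$ gives $\sigma_{i + k}(A + B) \mid s_{i+1}(A')$ for $1 \le i \le n - k$, while the rank-one case applied to the pair $(A, u v^T)$ gives $s_{i+1}(A') \mid s_i(A)$ for $1 \le i \le n - 1$. Composing these divisibilities yields $\sigma_{i+k}(A + B) \mid s_i(A)$ as required.

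For the rank-one base case $B = u v^T$, I would form the $(n+1) \times (n+1)$ matrix
\[
\tilde{P} \;=\; \begin{pmatrix} zI - A & u \\ v^T & 1 \end{pmatrix}
\]
over $\F[z]$. The row operations $R_i \leftarrow R_i - u_i R_{n+1}$ for $1 \le i \le n$ replace the top-left block by $zI - A - u v^T = zI - (A+B)$ while zeroing out the entries of $u$, and the subsequent column operations $C_j \leftarrow C_j - v_j C_{n+1}$ for $1 \le j \le n$ zero out the entries of $v^T$ without disturbing the top-left block. These operations are unimodular over $\F[z]$, so $\tilde{P}$ is equivalent to the block diagonal matrix with diagonal blocks $zI - (A+B)$ and $1$. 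Its Smith invariants, in increasing order of divisibility, are therefore $1$ followed by those of $zI - (A+B)$.

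Since $zI - A$ is the top-left $n \times n$ principal submatrix of $\tilde{P}$, the rank-one claim follows once one establishes the interlacing inequality $\alpha_{\ell}(\tilde{P}) \mid \alpha_{\ell}(zI - A)$ for $1 \le \ell \le n$ in the increasing Smith convention; the identification $\alpha_{\ell}(\tilde{P}) = \alpha_{\ell - 1}(zI - (A+B))$ for $\ell \ge 2$ then translates this, in Villard's decreasing convention, into $\sigma_{i+1} \mid s_i$ for $1 \le i \le n - 1$.

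The main obstacle is precisely this interlacing inequality, a special case of the classical fact that the Smith invariants of an $(n+1) \times (n+1)$ matrix divide the corresponding invariants of any $n \times n$ principal submatrix. Divisibility at the level of determinantal divisors, $d_{\ell}(\tilde{P}) \mid d_{\ell}(zI - A)$, is immediate, because every $\ell \times \ell$ minor of $zI - A$ is also a minor of $\tilde{P}$. Lifting this to divisibility of the individual invariant factors $\alpha_{\ell} = d_{\ell}/d_{\ell-1}$ requires the monotonicity $c_{\ell - 1} \mid c_{\ell}$ of the quotients $c_{\ell} = d_{\ell}(zI - A)/d_{\ell}(\tilde{P}) \in \F[z]$; I would establish this by a Laplace expansion of the $\ell \times \ell$ minors of $\tilde{P}$ that involve the extra row or column, exploiting the unit entry at position $(n+1, n+1)$ to express each such minor as a combination of minors of $zI - A$ and of $zI - (A+B)$, and then tracking how these contributions enter $d_{\ell}(\tilde{P})$ as $\ell$ increases by one.
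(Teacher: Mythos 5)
The paper never proves this statement at all --- it is quoted directly from Villard~\cite{vill00} (his Lemma~1) --- so there is no internal proof to compare against; your argument has to be judged on its own merits and against the classical route that Villard's lemma rests on. Your overall architecture is sound and is essentially that standard route: the induction on $k$ reducing to a rank-one perturbation is bookkept correctly (with the paper's padding by trivial invariant factors), and the bordering construction works --- the row and column operations you describe are unimodular over $\F[z]$ and reduce $\tilde{P}$ to $\mathrm{diag}(zI-(A+B),1)$, so the Smith invariants of $\tilde{P}$ are those of $zI-(A+B)$ together with one extra unit, and your translation between the increasing Smith convention and the paper's decreasing convention is correct.

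The gap is precisely the step you flag as the main obstacle. The inequality $\alpha_{\ell}(\tilde{P}) \mid \alpha_{\ell}(zI-A)$ for the submatrix obtained by deleting one row and one column is the S\'a--Thompson interlacing theorem, and in your setting it carries the entire content of the rank-one case: note that the ``monotonicity'' $c_{\ell-1} \mid c_{\ell}$ you propose to prove is \emph{equivalent} to the interlacing inequality (if $c_{\ell-1} \mid c_{\ell}$ for all $\ell$ then $\alpha_{\ell}(zI-A) = \alpha_{\ell}(\tilde{P}) \cdot c_{\ell}/c_{\ell-1}$, and conversely $c_{\ell}/c_{\ell-1} = \alpha_{\ell}(zI-A)/\alpha_{\ell}(\tilde{P})$), so it is not a stepping stone but the theorem itself. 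The Laplace-expansion plan is unlikely to deliver it: expanding minors of $\tilde{P}$ along the border gives identities among individual minors, but $d_{\ell}$ is a gcd of minors, and gcd's do not pass through such expansions in a way that controls the ratios $d_{\ell}/d_{\ell-1}$ --- this is exactly why divisibility of determinantal divisors (which you correctly observe is immediate) does not lift to divisibility of invariant factors by elementary means, and why the interlacing theorem needs a genuinely different tool (multiplicativity of invariant factors under matrix products, a module-theoretic argument about quotienting by a cyclic submodule when one row/column is adjoined, or localization at each irreducible polynomial and valuation counting). So either cite the classical interlacing (or rank-one perturbation) theorem outright --- which makes your write-up a correct and standard proof, close in spirit to Villard's own reduction --- or replace the sketch by one of those arguments; as written, the crux is asserted rather than proved.
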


Villard also provided a result --- \cite[Theorem 2]{vill00} --- which is of use to confirm that $A$ \emph{does} not
have $k$ or more nontrivial nilpotent blocks, or nontrivial invariant factors, when $\F$ is sufficiently large. The
following result complements Villard's result by allowing this to be checked for, when $k$ is small, and when
$\F$ is a very small finite field --- the case where ``preconditioning'' is generally most complicated and
expensive, so that the assurance that preconditioning can be avoided might be of greatest interest.

\begin{theorem}
\label{thm:probable_relationship}
Let $A \in \matring{\F}{n}$ where $\F$ is a finite field with size~$q$. Let $B = V \cdot U$
where $U \in \matgrp{\F}{k}{n}$, $V \in \matgrp{\F}{n}{k}$, and the entries of~$U$ and~$V$ are selected
uniformly and independently from~$\F$.
\begin{enumerate}
\renewcommand{\labelenumi}{\text{\textup{(\alph{enumi})}}}
\item If $A$ has at most $k$ nontrivial nilpotent blocks then the minimal polynomial of $A + B$ is not divisible
     by~$z^2$ with probability at least
     \[
      \rho_1(q) = \frac{(q^2-2)(q^2-q-1)(q-1)}{q^4 (q+1)} = 1 - \frac{3q^4 + 2q^3 - 5q^2 + 2}{q^5 + q^4}
        \ge 1 - 3 q^{-1}.
     \]
\item If $A$ has at most $k$ nontrivial invariant factors and $f \in \F[z]$ is an irreducible polynomial with 
     degree~$d$ such that $f \not= z$, then the probability that $f$ does not divide the minimal polynomial
     of~$A + B$ is at least
     \begin{align*}
      \rho_2(q, d) &= \frac{(q^{4d} - 2)(q^{2d} - q^d - 1)}{q^{3d} (q^{3d} + q^{2d} + q^d + 1)} \\
      &= 1 - \frac{2 q^{5d} + 2 q^{4d} + q^{3d} + 2 q^{2d} - 2 q^d - 2}{q^{3d}(q^{3d} + q^{2d} + q^d + 1)}
      \ge 1 - 2 q^{-d}.
     \end{align*}
\end{enumerate}
\end{theorem}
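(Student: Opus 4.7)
The plan is to reduce the events ``$z^2 \mid \text{minpoly}(A+B)$'' (part~(a)) and ``$f \mid \text{minpoly}(A+B)$'' (part~(b)) to the singularity of a small explicit matrix built from $U$ and $V$, and then to compute directly the probability of that singularity over the small field $\F$. For part~(b) the first move is to extend scalars to $\K = \F[z]/(f)$ of size $q^d$ and set $\lambda = z \bmod f \in \K$, so the condition becomes the singularity of $(A-\lambda I) + VU$ over $\K$. For part~(a) I would instead use the characterisation that $z^2 \mid \text{minpoly}(M)$ iff $\ker M \cap \mathrm{Im}\, M \ne \{0\}$, equivalently $\mathrm{rank}(M) > \mathrm{rank}(M^2)$. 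The crucial subtlety is that, even in part~(b) where we pass to $\K$, the entries of $U$ and $V$ remain uniform over the \emph{small} field $\F$: a Schwartz--Zippel bound in $\K$ would give only $1 - O(k/q^d)$, whereas the exact rational functions $\rho_1(q)$ and $\rho_2(q,d)$ can only come from an explicit count over $\F$.

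The second ingredient is Villard's Theorem~\ref{thm:certain_relationship}, applied in both directions (to the pairs $(A,B)$ and $(A+B,-B)$), giving the two-sided divisibilities $\sigma_{i+k} \mid s_i$ and $s_{i+k} \mid \sigma_i$. Together with the hypotheses this forces all ``bad'' invariant factors of $A+B$ (those divisible by $z^2$ in part~(a), those nontrivial or divisible by $f$ in part~(b)) to occur among $\sigma_1,\dots,\sigma_{2k}$, and it lets the bad subspace be identified explicitly in terms of the columns of $V$ and the rows of $U$. I would then apply the Schur-complement identity
\[
  \det\bigl((A - \lambda I) + VU\bigr) \;=\; \det\!\begin{bmatrix} A - \lambda I & V \\ -U & I_k \end{bmatrix}
\]
(and an analogous block identity encoding ``$z^2$ versus $z$'' for part~(a)), choose bases for $\F^n$ adapted to $\ker(A - \lambda I)$ and $\ker\bigl((A - \lambda I)^T\bigr)$, and reduce the non-singularity of the full matrix to that of a single matrix of side at most $k$ whose entries are bilinear forms in the projections of $U$ and $V$ onto these small subspaces. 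Since those projections of uniform independent $\F$-vectors are themselves uniform independent $\F$-vectors, I end up with a clean probabilistic question about a small bilinear-form matrix with uniform $\F$-valued coefficients.

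The main obstacle --- and the bulk of the technical work --- is the final probability calculation. The bound $\rho_1(q) = (q^2-2)(q^2-q-1)(q-1)/\bigl(q^4(q+1)\bigr)$ and its $q^d$-analogue for $\rho_2(q,d)$ have a peculiar three-factor structure that cannot be produced by any single polynomial non-vanishing argument, and must arise instead from a careful case analysis in which the bad event is split into a small number of nearly-independent sub-events over $\F$, each counted by hand. I would expect the decomposition to correspond to something like a rank defect in a distinguished $2 \times 2$ sub-block of the reduced bilinear-form matrix combined with an alignment condition on one pair of vectors, but establishing the precise exponents --- rather than losing a constant factor as Schwartz--Zippel would --- and handling the mild dependence between the sub-events requires an explicit enumeration of the exceptional $(u,v)$ configurations in $\F$ (and in the $d$-dimensional $\F$-vector space $\K$ for part~(b)); this is the step where a generic tool is not available and the proof must grind the count out directly.
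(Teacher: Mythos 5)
Your overall skeleton points in a reasonable direction --- for part~(b) the paper does exactly what you propose at the outset (pass to $\K = \F[z]/\langle f\rangle$ with $\lambda = z \bmod f$ and study singularity of $A - \lambda I_n + VU$ over~$\K$ while $U,V$ stay uniform over~$\F$) --- but the proposal as written has a genuine gap: the entire quantitative content of the theorem is deferred to a ``final probability calculation'' that you describe only speculatively, and the specific mechanisms you guess at do not match anything that would produce these bounds. First, $\rho_1$ and $\rho_2$ are \emph{not} exact counts of a singularity probability of a small bilinear-form matrix; in the paper they are lower bounds assembled from a cascade of conditional estimates over iterated rank-one updates $A \mapsto A + v_i u_i$ (each update raises the rank with probability $(1-q^{-(n-r)})^2$, a geometric-series bound gives the factor $1 - 1/(q^{k-(n-r)+2}-1)$, and a further factor comes from a second nonsingularity event --- the correction $I + YZ$ in a Schur-type factorization when the rank defect~$t$ of~$A$ exceeds~$k$, a case your plan for part~(a) does not engage with, even though there $A+VU$ is forced to be singular and one must instead control the multiplicity of~$z$ in the characteristic polynomial versus the rank defect). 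An ``explicit enumeration of exceptional $(u,v)$ configurations'' for arbitrary~$A$ is neither needed nor plausibly obtainable, and your guess that the three-factor structure reflects ``a rank defect in a distinguished $2\times 2$ sub-block plus an alignment condition'' has no support in the reduction you set up.

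Second, two of your intermediate steps are unjustified. Villard's Theorem (the paper's Theorem~\ref{thm:certain_relationship}) applied in both directions does bound the number of invariant factors of $A+B$ divisible by $z^2$ (or by~$f$) by $2k$, but it does not ``identify the bad subspace explicitly in terms of the columns of $V$ and the rows of~$U$,'' and you give no mechanism for that identification; in the paper Villard's result is used only for the converse (completeness) direction of the protocols, not in the proof of Theorem~\ref{thm:probable_relationship} at all. More importantly, the assertion that the projections of uniform independent $\F$-vectors onto the kernel/cokernel data over~$\K$ ``are themselves uniform independent $\F$-vectors'' is precisely the point that needs proof in part~(b): for a kernel vector $x \in \columnspace{\K}{n}$ of $A - \lambda I_n$, the distribution of $u \cdot x$ depends on the $\F$-span of the entries of~$x$, and if that span is a proper $\F$-subspace of~$\K$ the vanishing probability exceeds $q^{-d}$ and the claimed bound degrades. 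The paper secures uniformity only by passing to a rational Jordan form over~$\F$ and exhibiting explicit kernel vectors of the companion blocks $C_{f^{j_i}} - \lambda I$ for which $u \cdot x = g(\lambda)$ with $g$ a uniformly random polynomial in $\F[z]$ of degree less than $d\, j_i$, so that vanishing is equivalent to $f \mid g$ and has probability exactly $q^{-d}$ (and similarly for membership of~$v$ in the column space). Without this structural computation --- and without the inductive bookkeeping that turns the single-update estimates into the stated products --- the proposal does not yield either inequality.
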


The proof of this result is, regrettably, rather long, but is also reasonably straightforward: Basic linear algebra
and probability theory suffice to establish the above result.

\begin{lemma}
\label{lem:rank_one_change}
Let $A \in \matring{\F}{n}$. 

\begin{enumerate}
\renewcommand{\labelenumi}{\text{\textup{(\alph{enumi})}}}
\item If $A$ has rank $r < n$ and the entries of vectors $u \in \rowspace{\F}{n}$ and
     $v \in \columnspace{\F}{n}$ are chosen uniformly and independently from~$\F$ then
     $A + v \cdot u$ has rank $r+1$ with  probability $(1 - |\F|^{-(n-r)})^2$, rank $r-1$ with probability
     at most $|\F|^{-2(n-r)}$, and rank~$r$, otherwise.
     
\item Let $\ell$ be a positive integer. Suppose that $A \in \matring{\F}{n}$ is nonsingular, $v \in
    \matgrp{\F}{n}{1}$,  and that $R \in \matgrp{\F}{\ell}{n}$. Then either
     \begin{enumerate}
     \renewcommand{\labelenumii}{\text{\textup{\roman{enumii}.}}}
     \item $A + v \cdot u \cdot R$ is nonsingular for every vector $u \in \matgrp{\F}{1}{\ell}$, or
     \item if the entries of a vector $u \in \matgrp{\F}{1}{\ell}$ are chosen uniformly and independently
         from~$\F$ (and independently from the entries of~$A$, $v$ and~$R$) then $A + v \cdot u \cdot R$
         is nonsingular with probability $1 - |\F|^{-1}$.
     \end{enumerate}

\item If $A$ is nonsingular and $v = 0 \in \columnspace{\F}{n}$ then $A + v \cdot u$ is nonsingular,
     as well, for every vector $u \in \rowspace{\F}{n}$. If $v$ is a nonzero vector in~$\columnspace{\F}{n}$
     and the entries of $u \in \rowspace{\F}{n}$ are chosen uniformly and independently from~$\F$
     then $A + v \cdot u$ is nonsingular with probability $1 - |\F|^{-1}$.
\end{enumerate}
\end{lemma}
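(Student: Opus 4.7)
The plan is to handle parts~(b) and~(c) by a direct application of the matrix determinant lemma, and to handle part~(a) by reducing to an explicit block form via the rank normal form of~$A$.

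For parts~(b) and~(c), the starting point is the identity
\[
 \det(A + p \cdot w) = \det(A) \cdot (1 + w \cdot A^{-1} \cdot p)
\]
for $A \in \matring{\F}{n}$ nonsingular, $p \in \columnspace{\F}{n}$, and $w \in \rowspace{\F}{n}$. For part~(c), setting $p = v$ and $w = u$ reduces nonsingularity of $A + v \cdot u$ to the condition $u \cdot (A^{-1} v) \ne -1$; when $v = 0$ this holds automatically, and when $v \ne 0$ the vector $A^{-1} v$ is nonzero, so $u \mapsto u \cdot (A^{-1} v)$ is a surjective linear functional on $\rowspace{\F}{n}$ that takes the value~$-1$ with probability exactly $|\F|^{-1}$. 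For part~(b), setting $p = v$ and $w = u \cdot R$ yields $\det(A + v \cdot u \cdot R) = \det(A) \cdot (1 + u \cdot c)$ where $c = R \cdot A^{-1} \cdot v \in \columnspace{\F}{\ell}$. If $c = 0$ then $A + v \cdot u \cdot R$ is nonsingular for every~$u$, giving case~(i); otherwise $u \mapsto u \cdot c$ is a surjective linear functional on $\rowspace{\F}{\ell}$, and $u \cdot c = -1$ with probability $|\F|^{-1}$, giving case~(ii).

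For part~(a), I would select nonsingular $P, Q \in \matring{\F}{n}$ with $P \cdot A \cdot Q = \begin{bmatrix} I_r & 0 \\ 0 & 0 \end{bmatrix}$. Since $P$ and $Q$ are invertible, $v' = P \cdot v$ and $u' = u \cdot Q$ are again independent and uniform over $\columnspace{\F}{n}$ and $\rowspace{\F}{n}$, and $\rank{A + v \cdot u} = \rank{P \cdot A \cdot Q + v' \cdot u'}$. Splitting $v' = \begin{bmatrix} v'_1 \\ v'_2 \end{bmatrix}$ and $u' = \begin{bmatrix} u'_1 & u'_2 \end{bmatrix}$ with $v'_1 \in \columnspace{\F}{r}$, $v'_2 \in \columnspace{\F}{n-r}$, $u'_1 \in \rowspace{\F}{r}$, and $u'_2 \in \rowspace{\F}{n-r}$, one obtains
\[
 P \cdot (A + v \cdot u) \cdot Q = \begin{bmatrix} I_r + v'_1 u'_1 & v'_1 u'_2 \\ v'_2 u'_1 & v'_2 u'_2 \end{bmatrix}.
\]
When $v'_2 \ne 0$ and $u'_2 \ne 0$, nonzero coordinates of $v'_2$ and $u'_2$ can be used in row and column operations to clear the three blocks outside $I_r$, reducing the matrix to block-diagonal form with rank exactly $r+1$. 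When $v'_2 = 0$ (or symmetrically $u'_2 = 0$) the last $n-r$ rows vanish and the rank equals that of $[\,I_r + v'_1 u'_1 \; | \; v'_1 u'_2\,]$; the matrix determinant lemma shows $I_r + v'_1 u'_1$ is rank-deficient precisely when $u'_1 \cdot v'_1 = -1$, and a short algebraic check --- solving $x + v'_1 (u'_1 \cdot x) = v'_1$ directly --- shows $v'_1 \notin \text{im}(I_r + v'_1 u'_1)$ in that case, so the extra columns $v'_1 u'_2$ restore the rank to~$r$ whenever $u'_2 \ne 0$. Hence the rank drops below~$r$ only when both $v'_2 = 0$ and $u'_2 = 0$, an event of probability $|\F|^{-2(n-r)}$, while the rank equals $r+1$ precisely when $v'_2 \ne 0$ and $u'_2 \ne 0$, an event of probability $(1 - |\F|^{-(n-r)})^2$; these probabilities follow directly because the coordinates of $v'$ and $u'$ are uniform and independent over~$\F$.

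The main obstacle I anticipate is the sub-case verification in part~(a): showing that when $I_r + v'_1 u'_1$ is rank-deficient, the column block $v'_1 u'_2$ restores the rank as soon as $u'_2 \ne 0$. This hinges on the explicit computation that $v'_1 \notin \text{im}(I_r + v'_1 u'_1)$ when $u'_1 \cdot v'_1 = -1$; after that is established, everything else is either a standard probability count over~$\F$ or a direct invocation of the matrix determinant lemma.
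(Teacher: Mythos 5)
Your proposal is correct, but it reaches the lemma by a somewhat different route than the paper. For part~(a) the paper never passes to the rank normal form $P A Q = \left[\begin{smallmatrix} I_r & 0 \\ 0 & 0\end{smallmatrix}\right]$: it permutes the columns of~$A$ so that the first $r$ are independent, notes that $v$ lies outside the column space with probability $1 - |\F|^{-(n-r)}$, and then counts, for each trailing column, the single bad value of the corresponding entry of~$u$ that keeps that column inside the span of the first $r$ columns of $A + v\cdot u$; the rank-$(r-1)$ bound comes from observing that a drop forces $v$ into the column space and $u$ into the row space. Your block computation packages the same dichotomy ($v'_2 \neq 0$ iff $v \notin \mathrm{col}(A)$, $u'_2 \neq 0$ iff $u \notin \mathrm{row}(A)$) into an exact characterization of when each rank occurs, which makes both probability statements immediate; the price is the extra sub-case analysis when $I_r + v'_1 u'_1$ is singular, which you handle correctly (the computation $u'_1 x\,(1 + u'_1 v'_1) = u'_1 v'_1 = -1$ shows $v'_1 \notin \mathrm{im}(I_r + v'_1 u'_1)$, and $u'_2 \neq 0$ then restores rank~$r$). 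For parts~(b) and~(c) the paper avoids the matrix determinant lemma and instead argues on the kernel: $A + v\cdot u\cdot R$ is singular iff the unique candidate $x = -A^{-1}v$ satisfies $u\,(R x) = 1$, which is the same criterion your identity $\det(A + v\cdot u\cdot R) = \det(A)\,(1 + u\,R A^{-1} v)$ produces (up to sign), with your version giving the exact probability $1 - |\F|^{-1}$ slightly more directly. Both arguments are elementary and yield the same bounds; the paper's is self-contained column/kernel reasoning, yours trades a small amount of imported machinery (rank normal form, determinant lemma) for a cleaner exhaustive case analysis.
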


\begin{proof}
Suppose first that $A$ has rank $r < n$ and the entries of vectors $u \in \rowspace{\F}{n}$
and $v \in \columnspace{\F}{n}$ are chosen uniformly and independently from~$\F$.

Since the column space of~$A$ includes $|\F|^{r}$ vectors, $v$ is not in the column space of~$A$
with probability $1 - |\F|^{-(n-r)}$. This is a necessary condition for the rank of $A + v \cdot u$ to exceed
that of~$A$, since the column space of~$A + v \cdot u$ is a subspace of the column space of~$A$,
otherwise.

With that noted, suppose that $v_1$ is not in the column space of~$A$.

Let $w_1, w_2, \dots, w_n \in \columnspace{\F}{n}$ be the columns of the matrix $A \in \matring{\F}{n}$
being considered. Permuting columns as needed we may assume without loss of generality that the first
$r$ columns of~$A$, $w_1, w_2, \dots, w_{r}$, are linearly independent.

Let $\mu_1, \mu_2, \dots, \mu_n$ be the entries of the vector $u \in \rowspace{\F}{n}$. Since columns
$w_1, w_2, \dots, w_{r}$ are linearly independent, and $v$ is not in the column space of~$A$,
it is easily checked that the first $r$ columns 
\[ w_1 + \mu_1 v, w_2 + \mu_2 v, \dots, w_{r} + \mu_{r} v \]
of the matrix $A + v \cdot u$ must be linearly independent as well --- so that the rank of~$A + v \cdot u$ is
at least $r$. On the other hand, the column space of this matrix is a subspace of the space spanned by
$w_1, w_2, \dots, w_{r}, v$, so that the rank of this matrix is also at most $r+ 1$.

Consider any choice of the first $r$ entries, $\mu_1, \mu_2, \dots, \mu_r$, of~$u$, and let $i $ be
an integer such that $r+1 \le i \le n$. Since $A$ has rank $r$ the $i^{\text{th}}$ column~$w_i$ of~$A$ must
be a linear combination of the first $r$ columns, so that there exist elements $\alpha_1, \alpha_2, \dots,
\alpha_{r} \in \F$ such that
\[
  w_i = \sum_{j=1}^{r} \alpha_j w_j.
\]
Now --- again, since $v$ is not in the column space of~$A$ --- it is easily checked that
the $i^{\text{th}}$ column $w_i + \mu_i v$ of the matrix $A + v \cdot u$ is only a linear combination of the first
$r$ columns of this matrix if
\[
 w_i + \mu_i v = \sum_{j=1}^{r} \alpha_j (w_j + \mu_j v)
\]
as well --- for the same values $\alpha_1, \alpha_2, \dots, \alpha_{r} \in \F$ as above. In this case one
can see --- by considering the multipliers for~$v$ in the above equation --- that it must also be true that
\[
 \mu_i = \sum_{j=1}^{r} \alpha_j \mu_j
\]
so that there is only one choice of~$\mu_i$ for which this condition holds. Since the values
$\mu_{r+1}, \mu_{r+2}, \dots, \mu_n$ are chosen uniformly and independently from~$\F$, it now follows
that $A + v \cdot u$ has rank $r$ (instead of $r+1$) with probability $|\F|^{-(n-r)}$ if $v_1$ is not in the
column space of~$A$. Since the entries of~$u$ and~$v$ are chosen uniformly and independently, it
follows that $A + v \cdot u$ has rank $r+1$ with probability $(1 - |\F|^{-(n-r)})^2$, as claimed.

As noted above, the rank of~$A + v \cdot u$ can only be $r-1$ if $v$ is in the column space of~$A$,
and the probability of this is $|\F|^{-(n-r)}$. Virtually the same argument establishes that the rank
of $A + v \cdot u$ can only be $r-1$ if $u$ is in the row space of~$A$, as well, and the probability of
this is also $|\F|^{-(n-r)}$. Since  the entries of~$u$ and~$v$ are chosen independently, the probability
that $A + v \cdot u$ has rank $r-1$ is at most $|\F|^{-2(n-r)}$.

Finally, since the ranks of~$A$ and~$A + v \cdot u$ can differ by at most one, the
rank of $A + v \cdot u$ is in $\{ r-1, r, r+1 \}$, as required
to complete the proof of part~(a) of the claim.

Suppose next that $A$ is nonsingular, $v \in \columnspace{\F}{n}$, $\ell$ is a positive integer, and
$R \in \matgrp{\F}{\ell}{n}$.

If $v = 0$, then $A + v \cdot u \cdot R = A$ for every vector $u \in \rowspace{\F}{\ell}$, so that $A + v \cdot u \cdot R$
is certainly nonsingular as well, and case~(i), mentioned in the claim, holds.

Otherwise $A + v \cdot u \cdot R$ is singular if and only if there is a nonzero vector $x \in \columnspace{\F}{n}$
such that $(A + v \cdot u \cdot R) x = Ax + v \cdot (u \cdot R \cdot x) = 0$. In this case $Ax$ is a nonzero scalar
multiple of~$v$. Now,  since $(A + v \cdot u \cdot R) x = 0$ if and only $(A + v \cdot R \cdot u) (\alpha x) = 0$ for
any nonzero $\alpha \in \F$, it suffices to consider the unique nonzero vector $x = -A^{-1} v$ --- in which case
$(A + v \cdot u \cdot R) x = 0$ if and only if $u \cdot (R \cdot  x) = 1$.

If $R \cdot x = 0 \in \columnspace{\F}{\ell}$, then $(A + v \cdot u \cdot R)x = Ax = -v \not= 0$ for every
vector $u \in \rowspace{\F}{\ell}$, and case~(i) holds once again.

Suppose, instead, that $R \cdot x$ is a nonzero vector in $\columnspace{\F}{\ell}$ and that the entries
of~$u \in \rowspace{\F}{\ell}$ are chosen uniformly and independently from~$\F$ (and independently of
the entries of~$A$, $v$, and~$R$). Consider an integer~$i$ such that $1 \le i \le \ell$ and the $i^{\text{th}}$
entry of~$R \cdot x$ is nonzero. After all other entries of~$u$ have been selected there is exactly one choice
of the $i^{\text{th}}$ entry of~$u$ such that $u \cdot R \cdot x = 1$. Thus $(A - v \cdot R \cdot u)x=0$ with
probability at most~$|\F|^{-1}$, so that case~(ii) holds --- establishing part~(b) of the claim.

Part~(c) of the claim is a trivial consequence of part~(b), obtained by setting $\ell = n$ and setting $R$
to be the identity matrix $I_n \in \matring{\F}{n}$.
\end{proof}

\begin{lemma}
\label{lem:making_nonsingular}
Let $A \in \matring{\F}{n}$ be a matrix with rank $r$ for a nonnegative integer~$r$, and let $k$ be a positive
integer such that $k \ge n-r$.

\begin{enumerate}
\renewcommand{\labelenumi}{\text{\textup{(\alph{enumi})}}}
\item If $k = n - r$, and the entries of matrices $U \in \matgrp{\F}{k}{n}$ and $V \in \matgrp{\F}{n}{k}$
     are chosen uniformly and
     independently from~$\F$, then $A + V \cdot U$ is nonsingular with probability at least
     \[
       \left( 1 - \frac{1}{|\F|^2-1} \right) (1 - |\F|^{-1})^2 = \frac{(|\F|^2 - 2)(|\F|-1)}{|\F|^2 (|\F|+1)}
        \ge 1 - 2 |\F|^{-1}.
     \]
     
\item Let $\ell$ be a positive integer. Suppose that $A \in \matring{\F}{n}$ is nonsingular, $V \in
     \matgrp{\F}{n}{k}$, and $R \in \matgrp{\F}{\ell}{n}$. If the entries of $U \in \matgrp{\F}{k}{\ell}$
     are chosen uniformly and independently from~$\F$ (and independently from the entries of~$A$,
     $V$ and~$R$) then $A + V \cdot U \cdot R$ is nonsingular with probability at least 
     \[
     \frac{|\F|-1}{|\F|} = 1 - |\F|^{-1}
     \]
     if $k = 1$, and with probability at least
     \[
      \left( 1 - \frac{|\F|^{-1}}{|\F|-1} \right) \cdot (1 - |\F|^{-1}) = 1 - \frac{|F|+1}{|\F|^2} > 1 - 2 |\F|^{-1}
     \]
     when $k \ge 2$.

\item If $A$ is nonsingular, $V \in \matgrp{\F}{n}{k}$, and the entries of a matrix $U \in \matgrp{\F}{k}{n}$
     are chosen uniformly and independently from~$\F$ (and independently from the entries of~$A$ and~$V$),
     then the matrix $A + V \cdot U$ is nonsingular with probability at least 
     \[
     \frac{|\F|-1}{|\F|} = 1 - |\F|^{-1}
     \]
     if $k = 1$, and with probability at least
     \[
      \left( 1 - \frac{|\F|^{-1}}{|\F|-1} \right) \cdot (1 - |\F|^{-1}) = 1 - \frac{|F|+1}{|\F|^2} > 1 - 2 |\F|^{-1}
     \]
     when $k \ge 2$.
     
\item If $n-r < k$ and $A$ has rank $r < n$, and the entries of matrices $U \in \matgrp{\F}{k}{n}$
     and $V \in \matgrp{\F}{n}{k}$ are chosen uniformly and independently from~$\F$, then $A + V \cdot U$
     is nonsingular with probability at least
     \[
        \frac{(|\F|^4 - 2)(|\F|^2 - |\F| - 1)}{|\F|^3 (|\F|^3 + |\F|^2 + |\F| + 1)}
         = 1 - \frac{2 |\F|^5 + 2 |\F|^4 + |\F|^3 + 2 |\F|^2 - 2|\F| - 2}{|\F|^3 (|\F|^3 + |\F|^2 + |\F| + 1)}
         \ge 1 - 2 |\F|^{-1}.
     \]
\end{enumerate}
\end{lemma}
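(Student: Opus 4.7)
The overall plan is to reduce each part to a question about when a single, smaller random matrix is nonsingular by invoking Schur complement / Sylvester-type determinant identities, and then to count using the standard formula for the number of invertible matrices over $\F$. Throughout, write $q = |\F|$.

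For part~(a), I would first normalize $A$ to $\operatorname{diag}(I_r, 0_k)$ with $k = n-r$: writing $A = X \cdot \operatorname{diag}(I_r, 0) \cdot Y$ with $X, Y$ invertible and absorbing $X^{-1}, Y^{-1}$ into $V$ and $U$ preserves their uniform distribution. Partitioning $V = \bigl(\begin{smallmatrix} V_1 \\ V_2 \end{smallmatrix}\bigr)$ and $U = \bigl(U_1 \; U_2\bigr)$ conformally with $V_2, U_2 \in \matring{\F}{k}$, a direct block multiplication yields the factorization
\[
A + V \cdot U \;=\; \begin{pmatrix} I_r & V_1 \\ 0 & V_2 \end{pmatrix} \begin{pmatrix} I_r & 0 \\ U_1 & U_2 \end{pmatrix},
\]
whose two factors are block triangular, so $\det(A + V \cdot U) = \det(V_2) \cdot \det(U_2)$. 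Hence $A + V \cdot U$ is nonsingular exactly when both of the independent uniform $k \times k$ matrices $V_2$ and $U_2$ are nonsingular, and the probability of this joint event equals $\bigl(\prod_{i=1}^{k}(1 - q^{-i})\bigr)^2$. The explicit closed-form lower bound announced in the statement then follows by the elementary estimate $\prod_{i=2}^{k}(1 - q^{-i}) \geq 1 - \sum_{i=2}^{k} q^{-i} \geq 1 - q^{-1}/(q-1)$.

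For part~(b), since $A$ is nonsingular, Sylvester's determinant identity gives $\det(A + V \cdot U \cdot R) = \det(A) \cdot \det(I_k + U \cdot R \cdot A^{-1} \cdot V)$. Letting $S = R \cdot A^{-1} \cdot V \in \matgrp{\F}{\ell}{k}$ (a fixed matrix) and taking a rank factorization $S = S_1 \cdot S_2$ with $s = \operatorname{rank}(S)$, a second application of the identity reduces the question to the nonsingularity of $I_s + S_2 \cdot U \cdot S_1 \in \matring{\F}{s}$. Because $S_1$ has full column rank and $S_2$ has full row rank, the map $U \mapsto S_2 \cdot U \cdot S_1$ is surjective onto $\matring{\F}{s}$, so $S_2 \cdot U \cdot S_1$ is uniformly distributed there. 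Thus $\Pr[A + V \cdot U \cdot R \text{ is nonsingular}] = \prod_{i=1}^{s}(1 - q^{-i}) \geq \prod_{i=1}^{k}(1 - q^{-i})$, and the stated bounds for $k = 1$ and $k \geq 2$ follow as in part~(a). Part~(c) is then the special case $\ell = n$, $R = I_n$ of part~(b).

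For part~(d), with $k > n - r$, I would split $V = (V', V'')$ and $U = \bigl(\begin{smallmatrix} U' \\ U'' \end{smallmatrix}\bigr)$, where $V'$ collects the first $n - r$ columns of $V$ and $U'$ the first $n - r$ rows of $U$. Writing $A + V \cdot U = (A + V' \cdot U') + V'' \cdot U''$, part~(a) gives that $A + V' \cdot U'$ is nonsingular with probability at least the bound $\rho_{(a)}$ established there; conditional on that event, part~(c) applied to the independent perturbation $V'' \cdot U''$ (of rank at most $k - (n - r)$) shows that the sum remains nonsingular with probability at least the corresponding bound $\rho_{(c)}$. Multiplying these two probabilities and simplifying yields the bound announced in the statement. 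The main obstacle will be matching the closed-form bounds in parts~(a) and~(d) exactly as written, since the elementary product estimates $\bigl(\prod_{i=1}^{k}(1 - q^{-i})\bigr)^2$ and $\rho_{(a)} \cdot \rho_{(c)}$ are not obviously equal to the displayed algebraic expressions, and some careful manipulation or a sharper intermediate combination will be required to put them into the precise form stated in the lemma.
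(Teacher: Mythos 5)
Your route is genuinely different from the paper's: the paper proves all four parts by an induction on rank-one updates, using Lemma~\ref{lem:rank_one_change} together with the recursion $\rho_{k,r} = \bigl(1-(2|\F|^{r-n}-|\F|^{2(r-n)})^k\bigr)\rho_{k-1,r+1}$, whereas you normalize $A$ and compute with exact determinant identities. For parts (b) and (c) your argument is correct and complete: Sylvester's identity plus the rank factorization shows the success probability is exactly $\prod_{i=1}^{s}(1-q^{-i})$ with $q=|\F|$ and $s=\rank{R A^{-1} V}\le k$, and the estimate $\prod_{i=2}^{k}(1-q^{-i})\ge 1-\tfrac{q^{-1}}{q-1}$ recovers precisely the stated bounds; this is the same quantity the paper reaches by its induction, obtained more transparently.

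The ``obstacle'' you flag for parts (a) and (d) is not, however, a matter of more careful manipulation. Your block factorization in (a) is correct and gives the \emph{exact} probability $\bigl(\prod_{i=1}^{k}(1-q^{-i})\bigr)^2$, and for every $q$ this is strictly \emph{smaller} than the bound asserted in part (a) as soon as $k=n-r\ge 2$: at $k=2$ one would need $(1-q^{-2})^2\ge 1-\tfrac{1}{q^2-1}$, which fails for all $q\ge 2$; concretely, for $n=2$, $r=0$, $q=2$ (so $A=0$ and $A+V\cdot U=V\cdot U$) the probability is $(6/16)^2=9/64\approx 0.141$, below the claimed $1/6$. So your computation in fact refutes part (a) as stated rather than proving it; the paper's own induction overshoots at the step where it conditions on ``there exists $i$ with $\rank{A+v_i\cdot u_i}=r+1$'' and then still treats the remaining $k-1$ pairs as fresh --- conditioning on a fixed index would give the factor $(1-q^{-(n-r)})^2$ per stage and reproduce exactly your product. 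A correct replacement constant would be, e.g., $(1-q^{-1})^2\bigl(1-\tfrac{q^{-1}}{q-1}\bigr)^2$. For part (d), composing (a) with (c) is sound as far as it goes, but it only yields $\bigl(\prod_{i=1}^{n-r}(1-q^{-i})\bigr)^2\cdot\bigl(1-\tfrac{q+1}{q^2}\bigr)$, which for $q=2$ and large $n-r$ is below $0.05$, far under the stated constant $7/60\approx 0.117$; the paper instead bounds the probability that \emph{some} $(n-r)$-subset of the rank-one terms already restores full rank and multiplies by a conditional bound, which is how it gets a much larger constant (built, unfortunately, on the same flawed recursion). In short: (b) and (c) are proved; (a) and (d) are not provable by your estimates as written, and for (a) the stated inequality itself is incorrect for $k\ge 2$, so the constants must be weakened rather than matched.
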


\begin{proof}
Suppose first that $A \in \matring{\F}{n}$ is an arbitrarily chosen matrix with rank~$r$
such that $k \ge n-r$. Note that if a matrix $U \in \matgrp{\F}{k}{n}$ has rows
$u_1, u_2, \dots, u_k \in \matgrp{\F}{1}{n}$ (from top to bottom) and $V \in \matgrp{\F}{n}{k}$ has
columns $v_1, v_2, \dots, v_k \in \matgrp{\F}{n}{1}$ (from left to right) then
\[
  A + V \cdot U = A + \sum_{i=1}^k v_i \cdot u_i.
\]
With that noted --- supposing, as above, that the entries of~$U \in \matgrp{\F}{k}{n}$ and
$V \in \matgrp{\F}{n}{k}$ are chosen uniformly and independently from~$\F$ --- let $\rho_{k, r}$
be the probability that there exist integers $j_1, j_2, \dots, j_{n-r}$ such that
\[
 1 \le j_1 < j_2 < \dots < j_{n-r} \le k
\]
and such that the matrix
\[
  A + \sum_{h=1}^{n-r} v_{j_h} \cdot u_{j_h}
\]
is nonsingular. It trivially follows that $\rho_{k, n} = 1$ for every integer~$k$ such that $0 \le k \le n$.

Suppose next that $r < n$. Since the entries of $U \in \matgrp{\F}{k}{n}$ and $V \in \matgrp{\F}{n}{k}$
are chosen uniformly and independently from~$\F$, it follows by a straightforward application of
part~(a) of Lemma~\ref{lem:rank_one_change} that the probability that $A + v_i \cdot u_i$ has rank
at most~$r$, for every integer~$i$ such that $1 \le i \le k$, is $(2 |\F|^{r-n} - |\F|^{2(r-n)})^k$.

Suppose, instead, that there exists an integer~$i$ such that $1 \le i \le k$ and $A + v_i \cdot u_i$
has rank~$r+1$. Permuting the rows of~$U$ and columns of~$V$ as needed --- without changing
the distributions used to generate these matrices --- we may assume without loss of generality that
$i = k$. Now let
\[
  \widehat{A} = A + v_k \cdot u_k \in \matring{\F}{n},
\]
a matrix with rank $r+1$. Since the entries of the vectors $u_1, u_2, \dots, u_{k-1} \in \matgrp{\F}{1}{n}$
and the vectors $v_1, v_2, \dots v_{k-1} \in \matgrp{\F}{n}{1}$ are chosen uniformly and independently
from~$\F$ ---and independently of either the entries of $u_k \in \matgrp{\F}{1}{n}$ and
$v_k \in \matgrp{\F}{n}{1}$, or of the entries of the above matrix~$\widehat{A}$ --- one can now consider
$\widehat{A}$ instead of~$A$ to conclude that
\[
  \rho_{k, r} = \left( 1 - \left(2 |\F|^{r-n} - |\F|^{2(r-n)}\right)^k \right) \cdot \rho_{k-1, r+1}.
\]

It follows, by induction on~$n-r$, that
\begin{align*}
 \rho_{k, r} &= \left( 1 - \left(2 |\F|^{r-n} - |\F|^{2(r-n)}\right)^k \right) \cdot \rho_{k-1, r+1} \\
   &= \prod_{i=0}^{n-r-1} \left( 1 - \left(2 |\F|^{r-n+i} - |\F|^{2(r-n+i)}\right)^{k-i} \right).
\end{align*}

If $0 \le i \le n-r-2$ then
\begin{align*}
 1 - \left(2 |\F|^{r-n+i} - |\F|^{2(r-n+i)}\right)^{k-i}
  &= 1 - |\F|^{(r-n+i)(k-i)} \left(2 - |\F|^{r-n+i}\right)^{k-i} \\
  &\ge 1 - (2|\F|^{r-n+i})^{k-i} \\
  &\ge 1 - (|\F|^{k-i})^{r-n+i+1}. \\
\end{align*}
If $i = n-r-1$ then
\[
 1 - \left(2|\F|^{r-n+i} - |\F|^{2 (r-n+i)}\right)^{k-i} = 1 - (2 |\F|^{-1} - |\F|^{-2})^{k-(n-r) + 1}.
\]

Thus
\begin{align}
 \rho_{k, r} &\ge \left( \prod_{i=0}^{n-r-2} 1 - (|\F|^{k-i})^{r-n+i+1} \right) \cdot 
   (1 - (2|\F|^{-1} - |\F|^{-2})^{k-(n-r)+1}) \notag \\
  &= \left( \prod_{i=0}^{n-r-2} 1 - (|\F|^{i-k})^{n-r-i-1}\right) \cdot 
    (1 - (2|\F|^{-1} - |\F|^{-2})^{k-(n-r)+1}) \notag \\
  &\ge \left( \prod_{i=0}^{n-r-2} 1 - (|\F|^{(n-r) - k - 2})^{n-r - i - 1} \right) \cdot 
    (1 - (2|\F|^{-1} - |\F|^{-2})^{k-(n-r)+1}) \tag{since $(n-r) - k - 2 \ge i-k$ when $0 \le i \le n-r-2$} \\
  &\ge \left( 1 - \sum_{i=0}^{n-r-2} (|\F|^{(n-r) - k - 2})^{r-n - i - 1} \right) \cdot
    (1 - (2|\F|^{-1} - |\F|^{-2})^{k-(n-r)+1})  \notag \\
  &\ge \left( 1 - \sum_{j \ge 1} (|\F|^{(n-r) - k - 2})^{j} \right) \cdot
    (1 - (2|\F|^{-1} - |\F|^{-2})^{k-(n-r)+1}) \notag \\
  \label{eq:probability_of_first_event}
  &= \left( 1 - \frac{1}{|\F|^{k - (n-r) + 2} - 1} \right) \cdot 
    (1 - (2|\F|^{-1} - |\F|^{-2})^{k-(n-r)+1}) 
\end{align}

Suppose, now, that $k = n-r$, so that $r < n$ since $k$ is a positive integer. Then
\[
 1 - (2|\F|^{-1} - |\F|^{-2})^{k-(n-r)+1} = (1 - |\F|^{-1})^2.
\]
It is easily checked (setting $k = n-r$) that this establishes part~(a) of the claim.

Part~(b) follows by a similar argument: Suppose, now, that $A$ is nonsingular, let $k \ge 0$,
$V \in \matgrp{\F}{n}{k}$, $\ell \ge 1$, $R \in \matgrp{\F}{\ell}{n}$,  and suppose that the entries
of~$U \in \matgrp{\F}{k}{\ell}$ are chosen uniformly and independently from~$\F$ (and independently
of the entries of~$A$, $V$ and~$R$). Let $\mu_k$ be the probability that the matrix $A + V \cdot U \cdot R$
is also nonsingular. It trivially follows that $\mu_0 = 1$.

Suppose next that $k \ge 1$. The probability that $A + V \cdot U \cdot R$ is nonsingular can be
under-approximated by the probability that both this is the case and there exists an integer~$i$
such that $1 \le i \le k$ and $A + v_i \cdot u_i \cdot R$ is nonsingular where $u_1, u_2, \dots, u_k
\in \rowspace{\F}{\ell}$ are the rows of~$U$ (from top to bottom) and $v_1, v_2, \dots, v_k
\in \columnspace{\F}{n}$ are the columns of~$v$ (from left to right).

Since the entries of $U \in \matgrp{\F}{k}{\ell}$ are chosen uniformly and independently from~$\F$ (and
independently of the entries of~$A$, $V$ and~$R$), it follows by a straightforward application of part~(b) of
Lemma~\ref{lem:rank_one_change} that $A + v_i \cdot u_i \cdot R$ is singular, for every integer $i$ such that
$1 \le i \le k$, with probability at most $|\F|^{-k}$.

Suppose now that $A + v_i \cdot u_i \cdot R$ is nonsingular for at least one integer~$i$ such that $1 \le i \le k$.
Once again, permuting the rows of~$U$ and columns of~$V$ as needed --- without changing the
distribution to generate these matrices --- we may assume without loss of generality that $i = k$. Let
\[
 \widehat{A} = A + v_k \cdot u_k \cdot R,
\]
a nonsingular matrix in~$\matring{\F}{n}$. Since the entries of the vectors $u_1, u_2, \dots, u_{k-1}
\in \matgrp{\F}{1}{\ell}$ and vectors $v_1, v_2, \dots,v_{k-1} \in \matgrp{\F}{n}{1}$ are chosen uniformly
and independently from~$\F$ --- and independently of either the entries of the vectors
$u_k \in \matgrp{\F}{1}{n}$ and $v_k \in \matgrp{\F}{n}{1}$ or the entries of the above matrix~$\widehat{A}$ ---
we now have that
\[
  \mu_k \ge \left( 1 - |\F|^{-k} \right) \cdot \mu_{k-1}.
\]
Thus $\mu_1 \ge 1 - |\F|^{-1}$ and it is easily established by induction on~$k$ that if $k \ge 2$ then
\begin{align*}
 \mu_k
  &\ge \prod_{i=0}^{k-1} \left( 1 - |\F|^{i-k} \right) \\
  &\ge \left( \prod_{j \ge 2} \left( 1 - |\F|^{-j} \right) \right) \cdot (1 - |\F|^{-1}) \\
  &\ge \left( 1 - \sum_{j \ge 2} |\F|^{-j} \right) \cdot (1 - |\F|^{-1}) \\
  &+ \left( 1 - \frac{|\F|^{-1}}{|\F|-1} \right) \cdot (1 - |\F|^{-1}) \\
  &= 1 - \frac{|\F|+1}{|\F|^2} \\
  &\ge 1 - 2 |\F|^{-1},
\end{align*}
as needed to establish part~(b) of the claim.

Part~(c) of the claim is a trivial corollary of part~(b), obtained by setting $\ell = n$ and setting
$R$ to be the identity matrix $I_n \in \matring{\F}{n}$.

Essentially the same argument (with $\ell = n$ and $R = I_n$) establishes, for $k > n-r$, that if
$A \in \matring{\F}{n}$ has rank~$r$,
then the conditional probability that $A + V \cdot U$ is nonsingular, given that there exist integers
$j_1, j_2, \dots, j_{n-r}$ such that
\[
  1 \le j_1 < j_2 < \dots < j_{n-r} \le k
\]
and the matrix
\[
  A + \sum_{h=1}^{n-r} v_{j_h} \cdot u_{j_h}
\]
is nonsingular, is at least
\begin{equation}
\label{eq:simple_conditional_probability}
1 - |\F|^{-1}
\end{equation}
if $k = n-r+1$, and at least
\begin{equation}
\label{eq:general_conditional_probability}
1 - \frac{|\F|+1}{|\F|^2} \ge 1 - 2 |\F|^{-1}
\end{equation}
if $k \ge n-r+2$.

One can now under-approximate the probability that $A + V \cdot U$ is nonsingular, when
$A$ has rank $r < n$ and $k > n-r$,  by the probability that both this is the case and there exist
integers $j_1, j_2, \dots, j_{n-r}$ such that
\[
  1 \le j_1 < j_2 < \dots < j_{n-r} \le k
\]
and the matrix $A + \displaystyle{\sum_{h=1}^{n-r}} v_{j_h} \cdot u_{j_h}$ is nonsingular. It follows
by the bounds at lines~\eqref{eq:probability_of_first_event} and~\eqref{eq:simple_conditional_probability}
that this is at least $f_1(|\F|)$, where
\begin{align*}
 f_1(z) &= \left( 1 - \frac{1}{z^3-1}\right) \cdot \left( 1 - (2z^{-1}-z^{-2})^2\right) \cdot (1 - z^{-1}) \\
   &= \frac{(z^3-2)}{z(z^2+z+1)} \cdot (1 - (2 z^{-1} - z^{-2})^2)
\end{align*}
if $k = n-r+1$, and --- by the inequalities at lines~\eqref{eq:probability_of_first_event}
and~\eqref{eq:general_conditional_probability} ---  at least $f_2(|\F|)$, where
\begin{align*}
 \left( 1 - \frac{1}{z^4-1} \right) &\cdot \left( 1 - (2z^{-1} - z^{-2})^3 \right) \cdot
    \left( \frac{z^2 - z - 1}{z^2} \right) \\
  &\ge \left( 1 - \frac{1}{z^4-1} \right) \cdot (1 - z^{-1}) \cdot \left( \frac{z^2-z-1}{z^2}\right) \\
  &= \frac{(z^4-2)(z^2-z-1)}{z^3(z^3+z^2+z+1)} = f_2(z)
\end{align*}
if $k \ge n-r+2$.

Suppose first that $|\F|=2$. Then $f_1(|\F|) = \frac{3}{16}$ and $f_2(|\F|) = \frac{7}{60}$, so that
$f_2(|\F|) \le f_1(|\F|)$ in this case.

On the other hand, if $z = |\F| \ge 3$ then
\begin{align*}
f_1(z) &= \frac{(z^3-2)}{z (z^2 + z + 1)} \cdot (1 - (2z^{-1} - z^{-2})^2) \\
 &\ge \frac{(z^3-2)}{z (z^2+z+1)} \cdot (1 - z^{-1}) \\
 &= \frac{(z^3-2)(z-1)}{z^2 (z^2 + z + 1)} = \widehat{f}_1(z).
\end{align*}

Supposing, again, that $z \ge 3$, consider the polynomial
\begin{align*}
 F(z) &= z^3 (z^2+z+1)(z^3+z^2+z+1) \cdot (\widehat{f}_1(z) - f_2(z)) \\
   &= z^6 + 2 z^4 - 2z^2 -2z-2 \in \Z[z].
\end{align*}
Notice that the leading coefficient, $6$, of
\[
 F'(z) = 6 z^5 + 8 z^3 - 4z - 2,
\]
is equal to the sum of the absolute values of all negative coefficients. Since this polynomial includes
a second term with a positive coefficient, $F'(z) > 0$ whenever $z \ge 1$. It therefore suffices to
confirm that $F(3) = 865 > 0$ to confirm that $\widehat{f}_1(|\F|) \ge f_2(|\F|)$ whenever $|\F| \ge 3$.
Thus the desired probability is always at least $f_2(|\F|)$, as needed to establish part~(d) of the claim.
\end{proof}

\begin{lemma}
\label{lem:property_of_charpoly}
If $n$ and~$t$ are positive integers, and $A \in \matring{\F}{n}$ is a matrix with rank $n-t$ such that the
characteristic polynomial of~$A$ is $z^t \varphi(z)$ for a polynomial $\varphi \in \F[z]$ such that
$\varphi(0) \not= 0$, then the minimal polynomial of~$A$ is not divisible by~$z^2$.
\end{lemma}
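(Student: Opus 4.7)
The plan is to exploit the fact that, under these hypotheses, the geometric and algebraic multiplicities of the eigenvalue~$0$ coincide. This forces the Jordan blocks of~$A$ corresponding to the eigenvalue~$0$ to all have size~$1$, making $z$ appear with exponent at most one in the minimal polynomial of~$A$.

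First I would compute the two multiplicities separately. From the factorization of the characteristic polynomial as $z^t \varphi(z)$ with $\varphi(0)\neq 0$, the algebraic multiplicity of~$0$ is exactly~$t$, and by the standard decomposition of $\F^n$ into generalized eigenspaces this yields $\dim \ker(A^n) = t$. From the rank hypothesis together with rank-nullity, $\dim \ker(A) = n - (n-t) = t$ as well. Next I would consider the ascending chain
\[
\ker(A) \;\subseteq\; \ker(A^2) \;\subseteq\; \cdots \;\subseteq\; \ker(A^n),
\]
whose first and last terms have the same dimension~$t$, so that equality must hold throughout. In particular, $\ker(A^2) = \ker(A)$.

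Finally, I would invoke the standard fact that the multiplicity of $z$ in the minimal polynomial of~$A$ equals the least $j \ge 0$ for which $\ker(A^j) = \ker(A^{j+1})$. Since $\ker(A) = \ker(A^2)$, this multiplicity is at most~$1$, which is exactly the claim that $z^2$ does not divide the minimal polynomial. No step should pose a real obstacle: the proof is a short combination of rank-nullity, the equality of algebraic multiplicity with the dimension of the generalized eigenspace, and the characterization of the $z$-exponent in the minimal polynomial via stabilization of the kernel chain. The only care needed is to cite these standard facts cleanly, since the statement must be treated as a lemma about invariant factors in general (not only over an algebraically closed field).
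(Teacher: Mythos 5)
Your argument is correct, but it follows a different route from the paper's. You work with the kernel filtration: rank--nullity gives $\dim\ker(A)=t$, the hypothesis on the characteristic polynomial gives $\dim\ker(A^{n})=t$ (the generalized eigenspace for $0$ has dimension equal to the algebraic multiplicity, which holds over any field via primary decomposition since $\gcd(z^{t},\varphi)=1$), so the chain $\ker(A)\subseteq\ker(A^{2})\subseteq\cdots\subseteq\ker(A^{n})$ is constant and in particular $\ker(A)=\ker(A^{2})$; the characterization of the exponent of $z$ in the minimal polynomial as the point where this chain stabilizes then finishes the proof. The paper instead argues by contradiction through invariant factors: rank $n-t$ forces exactly $t$ invariant factors divisible by $z$, divisibility of the minimal polynomial by $z^{2}$ forces the first of these to be divisible by $z^{2}$, and since the characteristic polynomial is the product of the invariant factors its $z$-adic valuation would be at least $t+1$, contradicting the hypothesis. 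The two mechanisms are equivalent in content (both compare the $z$-primary structure of $A$ against the valuation $t$ of the characteristic polynomial), but yours is a direct argument using only kernels of powers and standard multiplicity facts, whereas the paper's phrasing in terms of invariant factors matches the language used throughout the surrounding sections (where counts of invariant factors divisible by $z$ or $z^{2}$ are the objects of interest). Either proof is acceptable; just make sure, as you note, to cite the generalized-eigenspace dimension fact and the kernel-stabilization characterization in a form valid over an arbitrary field, which they are since the eigenvalue in question is $0\in\F$.
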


\begin{proof}
Suppose, to the contrary, that the minimal polynomial of~$A$ is divisible by $z^2$. Then, since $A$ has
rank $n-t$, the $i^{\text{th}}$ invariant factor must be divisible by $z^{j_i}$ for an integer~$j_i$ and for
$1 \le i \le t$, where
\[
 j_1 \ge j_2 \ge \dots \ge j_t \ge 1.
\]
Furthermore $j_1 \ge 2$ since the minimal polynomial of~$A$ is divisible by $z^2$.

Since the characteristic polynomial of~$A$ is the product of the invariant factors of~$A$, it follows that
the characteristic polynomial of~$A$ must be divisible by $\prod_{i=1}^t z^{j_i} = z^{\sum_{i-1}^t j_i}$ and,
since $\sum_{i=1}^t j_i \ge t+1$, the characteristic polynomial of~$A$ cannot be as described in the claim.
\end{proof}

\begin{theorem}
\label{thm:condition_for_nilpotent_blocks}
Let $k$ be a positive integer and let $A \in \matring{\F}{n}$, for a positive integer~$n$, such that $A$ has at most
$k$ nontrivial nilpotent blocks. If the entries of $U \in \matgrp{\F}{k}{n}$ and $V \in \matgrp{\F}{n}{k}$ are
chosen uniformly and independently from~$\F$ then the probability that the minimal polynomial
of $A + V \cdot U$ is not divisible by $z^2$ is at least
\begin{equation}
\label{eq:expression_with_c}
\frac{(|\F|^2-2)(|\F|^2 - |\F| - 1)(|\F|-1)}{|\F|^4 (|\F|+1)}
   = 1 - \frac{3 |\F|^4 + 2 |\F|^3 - 5 |\F|^2 + 2}{|\F|^5 + |\F|^4}
   \ge 1 - 3 |\F|^{-1}
 \end{equation}
\end{theorem}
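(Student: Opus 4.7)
The goal is to show that with probability at least the bound shown at~\eqref{eq:expression_with_c}, the matrix $M = A + V \cdot U$ has only trivial Jordan blocks at eigenvalue~$0$; by Lemma~\ref{lem:property_of_charpoly}, this suffices to conclude that $z^2$ does not divide the minimal polynomial of~$M$. Conjugation $A \mapsto PAP^{-1}$ transforms $V \mapsto PV$ and $U \mapsto UP^{-1}$ while preserving both the hypothesis on the number of nontrivial nilpotent blocks and the uniform joint distribution of the entries of $V$ and~$U$, so we may assume without loss of generality that $A$ is presented in Fitting decomposition form $A = \operatorname{diag}(N, R)$, where $N \in \matring{\F}{m}$ is nilpotent (with at most $k$ Jordan blocks of size $\ge 2$) and $R \in \matring{\F}{n-m}$ is invertible. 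Partition $V = \binom{V_1}{V_2}$ and $U = (U_1\;U_2)$ conformably.

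The factorization of the target bound as
\[
 \frac{(|\F|^2-2)(|\F|-1)}{|\F|^2(|\F|+1)} \cdot \frac{|\F|^2 - |\F| - 1}{|\F|^2},
\]
with the first factor matching the bound of Lemma~\ref{lem:making_nonsingular}(a) (for $k = n-r$) and the second matching the bound of Lemma~\ref{lem:making_nonsingular}(c) for $k \ge 2$ (the $k = 1$ bound $1 - |\F|^{-1}$ is even larger), directly suggests a two-event strategy. First, condition on the event $E$ that $T := R + V_2 U_2$ is invertible; by Lemma~\ref{lem:making_nonsingular}(c) applied to the invertible matrix $R$, $\Pr[E]$ is at least the second factor. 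On $E$, a Sherman--Morrison--Woodbury computation shows that $W := I_k - U_2 T^{-1} V_2 = (I_k + U_2 R^{-1} V_2)^{-1}$ is invertible, and the Schur complement of the $(2,2)$-block of $M$ equals $S = N + (V_1 W) U_1$. Since $V_1$ is independent of $(V_2, U_2)$ and $W$ is an invertible $k \times k$ matrix depending only on $(V_2, U_2)$, the conditional distribution of $V_1 W$ is uniform on $\matgrp{\F}{m}{k}$ and independent of $U_1$; thus, conditioned on $E$, $S$ is a genuine rank-at-most-$k$ uniform random perturbation of the nilpotent matrix~$N$.

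On event $E$, the characteristic polynomial factors as $\chi_M(z) = \chi_T(z) \cdot \det(zI - \widetilde S(z))$, where $\widetilde S(z) = S + zG + O(z^2)$ for an appropriate matrix $G$ (essentially $-V_1 U_2 T^{-2} V_2 U_1$); combined with the rank identity $\dim \ker M = \dim \ker S$, this reduces the question of whether $M$ has no nontrivial $0$-Jordan block to an analogous question for the matrix pencil $-S + z(I-G) + O(z^2)$ at $z = 0$, governed by whether $(I - G)$ maps $\ker S$ into a complement of $\operatorname{Im}(S)$. When the nullity of $N$ is at most~$k$, an application of Lemma~\ref{lem:making_nonsingular}(a) (or (d) when the nullity is strictly less than~$k$) to $S$ yields $S$ nonsingular with conditional probability at least the first factor, in which case $M$ itself is nonsingular and $\minpol{M}$ is trivially coprime to~$z$; the product of the two conditional probabilities yields the target bound. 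The main obstacle is the complementary case where the nullity of $N$ exceeds~$k$: then $S$ is necessarily singular, and one must decompose $N = \operatorname{diag}(N', 0_{q_N})$ into its purely-nontrivial-nilpotent part $N'$ (of nullity exactly $p \le k$) and its trivial-nilpotent block $0_{q_N}$, apply Lemma~\ref{lem:making_nonsingular}(a) to the $N'$-sub-block of $S$ to obtain the first factor, and then verify via the pencil criterion above that the uniform randomness in the remaining entries of $S$ (together with the first-order correction~$G$) is enough to preclude the formation of any $0$-Jordan block of $M$ of size $\ge 2$.
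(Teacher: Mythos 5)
There is a genuine gap, and it sits exactly where the theorem is hard. Your reduction via the Fitting form $A=\operatorname{diag}(N,R)$ and the Schur complement $S=N+(V_1W)U_1$ of $T=R+V_2U_2$ is fine as far as it goes, but the hypothesis only bounds the number of \emph{nontrivial} nilpotent blocks: the nullity $t$ of $N$ can greatly exceed $k$, and in that case $S$ is forced to be singular and $M$ is singular, so everything hinges on showing that, with the stated probability, $0$ contributes only $1\times 1$ Jordan blocks to $M$. Your proposal reduces this to an unstated ``pencil criterion'' (whether $I-G$ maps $\ker S$ into a complement of $\operatorname{Im}(S)$, with $G$ built from $V_1U_2T^{-2}V_2U_1$) and then says one must ``verify'' that the remaining randomness suffices --- but that verification \emph{is} the theorem in this case, and it is not carried out; moreover $W$, $G$ and $S$ all depend jointly on $U_2,V_2$, so the independence you would need is not in place. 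The paper handles this case differently: it keeps $k$ of the \emph{trivial} nilpotent blocks together with the nontrivial part, so that the retained block $\widehat{A}\in\matring{\F}{(n-t+k)}$ has nullity exactly $k$ (making Lemma~\ref{lem:making_nonsingular}(a) applicable with the bound $f_1$), shows that $A+V\cdot U$ is similar to a block matrix whose characteristic polynomial is $z^{t-k}$ times that of $(I+YZ)A_{1,1}$, invokes Lemma~\ref{lem:property_of_charpoly}, and controls $I+YZ=I+\widehat{V}\,\widehat{U}\,\widehat{R}$ by Lemma~\ref{lem:making_nonsingular}(b), crucially using that $\widehat{U}=U_R$ is independent of $A_{1,1}$, $V_T$, $V_B$ and $U_L$. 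Nothing of comparable force appears in your sketch, and your decomposition $N=\operatorname{diag}(N',0_{q_N})$, which strips out \emph{all} trivial blocks, does not by itself recover it.

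There is also a quantitative error in the case you do treat. When the nullity of $N$ is strictly less than $k$, Lemma~\ref{lem:making_nonsingular}(d) gives only $f_2(q)=\frac{(q^4-2)(q^2-q-1)}{q^3(q^3+q^2+q+1)}$, which is strictly smaller than the first factor $f_1(q)=\frac{(q^2-2)(q-1)}{q^2(q+1)}$ you claim (this is the content of the paper's polynomial $F_1$), so your product $f_2(q)\cdot\frac{q^2-q-1}{q^2}$ falls strictly below the target $f_3(q)=f_1(q)\cdot\frac{q^2-q-1}{q^2}$. The conditioning on $T$ is not needed there --- one can apply Lemma~\ref{lem:making_nonsingular}(a)/(d) directly to $A+V\cdot U$, as the paper does --- but then one still has to prove $f_2(q)\ge f_3(q)$ for all $q\ge 2$ (the paper's $F_2$ comparison), which your proposal never does. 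So both the hard case and the final comparison to the stated bound are missing.
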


\begin{proof}
Let $\ell$ be the number of nontrivial nilpotent blocks of~$A$, so that $\ell \le k$. Suppose that $A$ has
rank $r= n - t$ for an integer $t$. Then $\ell \le t \le n$. The cases that $t \le k$ and $t > k$ are
considered, separately, below.

Suppose first that $t \le k$. It follows by parts~(a) and~(d) of Lemma~\ref{lem:making_nonsingular} that
$A + V \cdot U$ is nonsingular with probability at least $\min(f_1(|\F|), f_2(|\F|)$, where
\[
  f_1(z) = \frac{(z^2-2)(z-1)}{z^2(z+1)}
\] 
and
\[
 f_2(z) = \frac{(z^4-2)(z^2-z-1)}{z^3 (z^3+z^2+z+1)}.
\]
The minimal polynomial of $A + V \cdot U$ cannot be divisible by either~$z$ or~$z^2$ if this is the
case.

Suppose next that $t > k$, so that the rank of~$A$ is $n - t < n-k$. In this case --- since the number of
nilpotent blocks with size one in a (rational) Jordan normal form for~$A$ is $t - \ell \ge t - k$ --- $A$ is similar
to a matrix
\begin{equation}
\label{eq:similar_matrix}
 \widetilde{A} = \begin{bmatrix}
   \widehat{A} & 0_{(n-t+k) \times (t-k)} \\
   0_{(t-k) \times (n-t+k)} & 0_{(t-k) \times (t-k)}
 \end{bmatrix},
\end{equation}
where $\widehat{A} \in \matring{\F}{(n-t+k)}$, so that $A = X^{-1} \widetilde{A} X$ for a nonsingular matrix
$X \in \matring{\F}{n}$.

Now, since $A$ and~$\widetilde{A}$ are similar, these matrices have the same rank, invariant factors, and
the same number of nontrivial nilpotent blocks. Furthermore, if the entries of matrices $U \in \matgrp{\F}{k}{n}$
and $V \in \matgrp{\F}{n}{k}$ are chosen uniformly and independently from~$\F$ then so are the entries of the
matrices $\widetilde{U} = U \cdot X^{-1} \in \matgrp{\F}{k}{n}$, and $\widetilde{V} = X \cdot V \in
\matgrp{\F}{n}{k}$. Multiplying by $X$ on the left and by~$X^{-1}$ we may therefore replace~$A$
with~$\widetilde{A}$ --- effectively assuming without loss of generality that $A = \widetilde{A}$ as shown
at line~\eqref{eq:similar_matrix}.

Let $U_L \in \matgrp{\F}{k}{(n-t+k)}$ and $U_R \in \matgrp{\F}{k}{(t-k)}$ be the left and right submatrices
of~$U$, and let $V_T \in \matgrp{\F}{(n-t+k)}{k}$ and $V_B \in \matgrp{\F}{(t-k)}{k}$ be the top and bottom
submatrices of~$V$, so that
\[
 A + V \cdot U = \begin{bmatrix}
   \widehat{A} & 0 \\ 0 & 0 \end{bmatrix} + \begin{bmatrix} V_T \\ V_B \end{bmatrix} \cdot
      \begin{bmatrix} U_L & U_R \end{bmatrix}
   = \begin{bmatrix} A_{1, 1} & A_{1, 2} \\ A_{2, 1} & A_{2, 2} \end{bmatrix}
\]
where
\[ A_{1, 1} = \widehat{A} + V_T \cdot U_L \in \matring{\F}{(n-t+k)},\]
\[ A_{1, 2} = V_T \cdot U_R \in \matgrp{\F}{(n-t+k)}{(t-k)}, \]
\[ A_{2, 1} = V_B \cdot U_L \in \matgrp{\F}{(t-k)}{(n-t+k)}, \]
and
\[ A_{2, 2}  = V_B \cdot U_R \in \matring{\F}{(t-k)}. \]

Since the entries of~$U_L$ and~$V_T$ are chosen uniformly and independently from~$\F$, it follows by
part~(a) of Lemma~\ref{lem:making_nonsingular} (replacing $n$ with $n-t+k$ and replacing $r$ with $n-t$)
that the matrix $A_{1, 1} = \widehat{A} + V_T \cdot U_L$
is nonsingular with probability at least
\[
  \left( 1 - \frac{1}{|\F|^2-1} \right) \cdot (1 - |\F|^{-1})^2 = \frac{(|\F|^2-2)(|\F|-1)}{|\F|^2 (|\F|+1)}.
\]

With that noted, consider (for the rest of this argument) the case that $A_{1,1 }$ is, indeed, nonsingular.

In this case, since the rank of $A + V \cdot U$ cannot exceed $n-t+k$, it follows that there exist matrices
$Y \in \matgrp{\F}{(n-t+k)}{(t-k)}$ and $Z \in \matgrp{\F}{(t-k)}{(n-t+k)}$ such that $A_{1, 2} = A_{1, 1} \cdot Y$,
and $A_{2, 1} = Z \cdot A_{1, 1}$ and --- since $A + V \cdot U$ and $A_{1, 1}$ have the same rank --- $A_{2, 2}
= Z \cdot A_{1, 1} \cdot Y$. Thus
\begin{align*}
 A + V \cdot U
   &= \begin{bmatrix} A_{1, 1} & A_{1, 1} \cdot Y \\ Z \cdot A_{1, 1} & Z \cdot A_{1, 1} \cdot Y \end{bmatrix} \\
   &= \begin{bmatrix} I_{n-t+k} & 0 \\ Z & I_{k-t} \end{bmatrix} \cdot
     \begin{bmatrix} A_{1, 1} & 0 \\ 0 & 0_{k-t} \end{bmatrix} \cdot
     \begin{bmatrix} I_{n-t+k} & Y \\ 0 & I_{k-t} \end{bmatrix}.
\end{align*}
Since the rightmost matrix shown in the above line is nonsingular, $A + V \cdot U$ is similar to the matrix
\begin{align*}
&\begin{bmatrix} I_{n-t+k} & Y \\ 0 & I_{k-t} \end{bmatrix} \cdot
 \begin{bmatrix} I_{n-t+k} & 0 \\ Z & I_{k-t} \end{bmatrix} \cdot
 \begin{bmatrix} A_{1, 1} & 0 \\ 0 & 0_{k-t} \end{bmatrix} \\
 &\hspace*{0.25 true in} = \begin{bmatrix} I_{n-t+k} & Y \\ 0 & I_{k-t} \end{bmatrix} \cdot
   \begin{bmatrix} A_{1, 1} & 0 \\ Z A_{1, 1} & 0_{k-t} \end{bmatrix} \\
 &\hspace*{0.25 true in}
    = \begin{bmatrix} (I_{n-t+k} + YZ) \cdot A_{1, 1} & 0 \\ Z A_{1, 1} & 0_{k-t} \end{bmatrix}.
\end{align*}
The characteristic polynomial of~$A+UV$ is, therefore, the product of $z^{k-t}$ and the
characteristic polynomial
of the matrix $(I_{n-t+k} + YZ) \cdot A_{1, 1}$. Since $A_{1, 1}$ is nonsingular, the matrix
$(I_{n-t+k} + Y Z) A_{1, 1}$  is also nonsingular if $I_{n-t+k} + YZ$ is --- and it would then follow by
Lemma~\ref{lem:property_of_charpoly} (with $t$ replaced by~$t-k$) that the minimal polynomial of
$A + V \cdot U$ is not divisible by~$z^2$.

It now suffices to note that, since $Y = A_{1, 1}^{-1} \cdot A_{1, 2} =
A_{1, 1}^{-1} \cdot V_T \cdot U_R$ and $Z = A_{2, 1} \cdot A_{1, 1}^{-1} = V_B \cdot U_L \cdot
A_{1, 1}^{-1}$, $ I_{n-t+k} + YZ = I_{n-t+k} + \widehat{V} \cdot \widehat{U}\cdot \widehat{R}$ for the matrices
\[
 \widehat{V} = A_{1, 1}^{-1} V_T \in \matgrp{\F}{(n-t+k)}{k},
\]
\[
 \widehat{U} = U_R \in \matgrp{\F}{k}{(t-k)},
\]
and
\[
 \widehat{R} = V_B \cdot U_L \cdot A_{1, 1}^{-1} \in \matgrp{\F}{(t-k)}{(n-t+k)}.
\]
Since the entries of $\widehat{U} = U_R$ are chosen uniformly from~$\F$, and independently from the
entries of~$A$, $U_L$, $V_T$ and~$V_B$, the entries of~$\widehat{U}$ are also chosen independently
from those of~$\widehat{V}$ and~$\widehat{R}$. Since $I_{n-t+k}$ is a fixed nonsingular matrix, it follows
by part~(b) of Lemma~\ref{lem:making_nonsingular}  that the conditional probability that
$I_{n-t+k} + \widehat{V} \cdot \widehat{U} \cdot \widehat{R}$ is nonsingular,
given that $A_{1, 1}$ is, is at least
\[
 \left( 1 - \frac{|\F|^{-1}}{|\F|-1} \right) \cdot (1 - |\F|^{-1}) = \frac{\F|^2 - |\F|-1}{|\F|^2},
\]
so that the minimal polynomial of $A + V \cdot U$ is not divisible by~$z^2$, in this case, with probability
at least $f_3(|\F|)$ where
\[
 f_3(z) = \frac{(z^2 - 2) (z^2 - z - 1) (z-1)}{z^4 (z+1)}.
\]
Now consider the polynomial
\[
 F_1(z) = z^3 \cdot (z+1) \cdot (z^3 + z^2 + z + 1) \cdot (f_1(z) - f_2(z)) = z^4 + z^3 - 2z - 2 \in \Z[z]
\]
and its derivative,
\[
 F'_1(z) = 4 z^3 + 3 z^2 - 2.
\]
Since the leading coefficient, $4$, of~$F'_1$ is greater than the sum of the absolute values of the
negative coefficients of this polynomial, $F'_1(z) > 0$ whenever $z \ge 1$. Since $F_1(2) = 18 > 0$,
it follows that $F_1(z) > 0$ when $z \ge 2$, and that $f_1(z) \ge f_2(z)$ when $z \ge 2$ as well.

Consider as well the polynomial
\begin{align*}
 F_2(z) &= z^4 \cdot (z^3 + z^2 + z + 1) \cdot (z+1) \cdot (f_2(z) - f_3(z)) \\
   &= z^7 + z^6 - 3 z^5 - 3z^4 - z^3 + z^2 + 4z + 2
\end{align*}
and its derivatives
\[
 F_2'(z) = 7 z^6 + 6 z^5 - 15 z^4 - 12 z^3 - 3 z^2 + 2z + 4,
\]
\[
 F_2''(z) = 42 z^5 + 30 z^4 - 60 z^3 - 36 z^2 - 6z + 2,
\]
and
\[
 F_2'''(z) = 210 z^4 + 120 z^3 - 180 z^2 - 72 z - 6.
\]
The leading pair of coefficients of~$F_2'''$ are both positive and their sum, $360$, exceeds the sum
of the absolute values of the negative coefficients of this polynomial. It follows that $F_2'''(z) > 0$
when $z \ge 1$.

Since $F_2''(2) = 1190 > 0$, it now follows that $F_2''(z) > 0$ when $z \ge 2$. Since $F_2'(2) = 300 > 0$,
it also follows that $F_2'(z) > 0$ when $z \ge 2$. Finally, since $F_2(2) =54 > 0$, it follows that
$F_2(z) > 0$ --- and $f_2(z) \ge f_3(z)$ --- whenever $z \ge 2$ as well.

Thus
\begin{align*}
 f_3(|\F|) &= \frac{(|\F|^2 - 2) (|\F|^2 - |\F| - 1)(|\F|-1)}{|\F|^4 (|\F|+1)} \\
   &= 1 - \frac{3 |\F|^4 + 2 |\F|^3 - 5 |\F|^2 + 2}{|\F|^5 + |\F|^4} \\
   &\ge 1 - \frac{3 |\F|^4 + 2 |\F|^3}{|\F|^5 + |\F|^4} \\
   &\ge 1 - 3 |\F|^{-1}
\end{align*}
is a lower bound for the probability that the minimal polynomial of~$A + V \cdot U$ is not divisible
by~$z^2$, in all cases, as needed to establish the claim.
\end{proof}

The next lemma and theorem generalize parts~(a) and~(c) of Lemmas~\ref{lem:rank_one_change},
as well as parts~(a), (c) and~(d) of \ref{lem:making_nonsingular}, as needed to establish
the second part of Theorem~\ref{thm:probable_relationship}.

\begin{lemma}
\label{lem:extended_rank_one_change}
Let $A \in \matring{\F}{n}$ and let $f \in \F[z]$ be a monic irreducible polynomial with degree~$d$. Suppose
the entries of vectors $v \in \columnspace{\F}{n}$ and $u \in \rowspace{\F}{n}$ are chosen uniformly and independently from~$\F$.
\begin{enumerate}
\renewcommand{\labelenumi}{\text{\textup{(\alph{enumi})}}}
\item If $\ell$ of the invariant factors of~$A$ are divisible by~$f$, for a positive integer~$\ell$, then exactly
     $\ell-1$ of the invariant factors of~$A + v \cdot u$ are divisible by~$f$ with probability at least
     $(1 - |\F|^{-\ell d})^2$, and exactly $\ell+1$ of the invariant factors of~$A + v \cdot u$ are divisible by~$f$
     with probability at most $|\F|^{-2 \ell d}$. Exactly $\ell$ of the invariant factors of~$A$ are divisible
     by~$f$, otherwise.
\item If the minimal polynomial of~$A$ is not divisible by~$f$ then the minimal polynomial of $A + v \cdot u$
      is also not divisible by~$f$ with probability at least $1 - |\F|^d$.
\end{enumerate}
\end{lemma}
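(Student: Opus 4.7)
The overall strategy is to work over the extension field $\K = \F[z]/(f)$ and let $\alpha \in \K$ denote the image of $z$. For any matrix $B \in \matring{\F}{n}$, the number of invariant factors of $B$ divisible by $f$ equals $n - \dim_\K \ker(B - \alpha I)$, so both parts reduce to tracking the $\K$-rank of $M + v \cdot u$, where $M = A - \alpha I \in \matring{\K}{n}$ has $\K$-rank $n - \ell$ in part~(a) and is invertible (the $\ell = 0$ case) in part~(b).

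For part~(a), the $\K$-rank of a rank-one update $M + v \cdot u$ (with $v, u \in \F^n$ but computed over $\K$) is $n - \ell + 1$ precisely when $v \notin \mathrm{col}_\K(M)$ and $u \notin \mathrm{row}_\K(M)$, and drops to $n - \ell - 1$ only when both $v \in \mathrm{col}_\K(M)$ and $u \in \mathrm{row}_\K(M)$. The key computation is $\Pr_{v \in \F^n}[v \in \mathrm{col}_\K(M)] = |\F|^{-\ell d}$. To establish this, let $L \subseteq \K^n$ be the left kernel of $M$ (of $\K$-dimension $\ell$), let $W = \sum_s \pi_s(L) \subseteq \F^n$ where $\pi_s$ extracts the $\beta^s$-component under the $\F$-basis $1, \beta, \ldots, \beta^{d-1}$ of $\K$, and observe that $W \otimes_\F \K$ is Galois-stable and so contains every conjugate $\sigma^j(L) = \ker(A - \sigma^j(\alpha) I)$, which is the left eigenspace of $A$ for the eigenvalue $\sigma^j(\alpha)$. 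Since the $d$ Galois conjugates $\sigma^j(\alpha)$ are the distinct roots of $f$ and hence distinct eigenvalues of $A$, these eigenspaces are $\K$-linearly independent, forcing $\dim_\K(W \otimes_\F \K) \geq d \ell$ and therefore $\dim_\F W = d \ell$ (the reverse inequality is immediate from the spanning set, and $d \ell \leq n$ holds automatically since the $f$-divisible invariant factors contribute at least $d \ell$ to the degree of the characteristic polynomial). An analogous argument handles rows, and the independence of the $u$ and $v$ draws then gives the claimed $(1 - |\F|^{-\ell d})^2$ for rank-up and $|\F|^{-2 \ell d}$ for rank-down.

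For part~(b), the assumption that $f$ does not divide the minimal polynomial of $A$ makes $M$ invertible over $\K$. By the matrix determinant lemma, $M + v \cdot u$ is singular if and only if $u \cdot x_0 = -1$, where $x_0 = M^{-1} v \in \K^n$. Expanding $x_0 = \sum_{s=0}^{d-1} \beta^s x_0^{(s)}$ with $x_0^{(s)} \in \F^n$ and taking $\alpha = \beta$, the identity $(A - \alpha I) x_0 = v$ yields the recurrence $x_0^{(s-1)} = A x_0^{(s)} + a_s x_0^{(d-1)}$ (where $f(z) = z^d + a_{d-1} z^{d-1} + \cdots + a_0$) together with $f(A) x_0^{(d-1)} = v$. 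Setting $w = f(A)^{-1} v$ and solving the recurrence top-down gives $x_0^{(s)} = P_s(A) w$ with $P_s(z) = z^{d-1-s} + a_{d-1} z^{d-2-s} + \cdots + a_{s+1}$; in particular $\{P_0, \ldots, P_{d-1}\}$ is an $\F$-basis of polynomials of degree less than $d$, and $P_0$ is the unique member with a $z^{d-1}$ term. The equation $u \cdot x_0 = -1$ then becomes the $d$ $\F$-linear conditions $u \cdot x_0^{(s)} = -\delta_{s, 0}$.

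The argument concludes with a case analysis on $\kappa := \min(d, \deg \minpol{A}(w))$, which equals the $\F$-dimension of the span of the $x_0^{(s)}$. When $\kappa = d$ the constraints are $\F$-linearly independent and consistent, giving probability exactly $|\F|^{-d}$ over uniform $u$; when $\kappa < d$ every $\F$-dependence $\sum_s y_s x_0^{(s)} = 0$ translates to $Q := \sum_s y_s P_s$ being a multiple of $\minpol{A}(w)$, and taking $R = z^{d-1-\kappa}$ produces $Q = \minpol{A}(w) \cdot R$ of degree exactly $d - 1$, so the coefficient of $z^{d-1}$ (which is $y_0$) is nonzero, showing that the right-hand side $(-\delta_{s, 0})$ lies outside the row space of the constraint matrix and the system is inconsistent with probability $0$. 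Combining the two cases bounds the singularity probability by $|\F|^{-d}$, yielding the claimed $1 - |\F|^{-d}$. The main obstacle is precisely this inconsistency argument, because the naive $\F$-independence bound fails whenever $\deg \minpol{A}(w) < d$ and only the structural fact that $P_0$ alone carries the leading $z^{d-1}$ term rescues the bound.
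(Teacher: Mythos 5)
Your proof is correct, and it is worth separating the two parts when comparing it with the paper. For part~(b) you follow essentially the paper's own route: both arguments reduce singularity of $A - \alpha I_n + v \cdot u$ over $\K = \F[z]/\langle f \rangle$ to $d$ $\F$-linear conditions on~$u$ against the component vectors of $(A-\alpha I_n)^{-1}v$, identify those components as $P_s(A)w$ for $w = f(A)^{-1}v$ (your $w$ is exactly the paper's $B_{d-1}v$), and split on whether the minimal polynomial of~$A$ relative to~$w$ has degree at least~$d$ (solution set an affine subspace of codimension~$d$, probability $|\F|^{-d}$) or degree less than~$d$ (system inconsistent --- which you justify by the leading-coefficient/$P_0$ observation and the paper justifies by noting that $A^{d-1}w$ is then a combination of the lower Krylov vectors). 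For part~(a), however, your argument is genuinely different: the paper passes to a rational Jordan form, computes explicitly when a random $v$ meets the column space of each companion block $C_{f^{j_i}} - \lambda I$, and exhibits explicit kernel vectors $\widehat{x}_i$ to bound the conditional probability concerning~$u$; you avoid normal forms entirely by using the exact characterization of a rank-one update over~$\K$ (rank increases iff $v \notin \mathrm{col}_\K$ and $u \notin \mathrm{row}_\K$) together with a Galois-descent dimension count showing that the set of $v \in \columnspace{\F}{n}$ lying in $\mathrm{col}_\K(A - \alpha I_n)$ is an $\F$-subspace of codimension exactly $\ell d$: the space $W$ of $\F$-components of the left kernel $L$ satisfies $\dim_\F W \ge \ell d$ because $W \otimes_\F \K$ is Galois-stable and so contains the $\K$-independent conjugate eigenspaces $\sigma^j(L)$, while the components of a $\K$-basis of~$L$ give the matching upper bound. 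Your route yields exact probabilities, treats $d = 1$ uniformly (the paper handles it separately via Lemma~\ref{lem:rank_one_change}), and needs no canonical form; the paper's computation is more elementary but longer. Two cosmetic points: your opening sentence says the number of invariant factors divisible by~$f$ equals ``$n - \dim_\K \ker(B - \alpha I)$'' --- it is $\dim_\K \ker(B - \alpha I) = n - \mathrm{rank}_\K(B - \alpha I)$, and your subsequent use is consistent with the correct identity; also, the bound ``$1 - |\F|^{d}$'' in part~(b) of the statement is a typo for $1 - |\F|^{-d}$, which is what both you and the paper actually prove.
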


\begin{proof}
Suppose first that $d = 1$; then $f = z - \lambda$ for some element~$\lambda$ of~$\F$. In this case,
for $\ell \ge 1$, exactly $\ell$ of the invariant factors of~$A$ (respectively, $A + v \cdot u$) are divisible
by~$f$ if and only $A - \lambda I_n$ (respectively, $A - \lambda I_n + v \cdot u$) has rank $r = n - \ell$.
Furthermore, the minimal polynomial of~$A$ (respectively, $A + v \cdot u$) is not divisible by~$f$ if and only if
$A - \lambda I_n$ (respectively, $A - \lambda I_n + v \cdot u$) is nonsingular. The above claims are, therefore,
consequences of parts~(a) and~(c) of Lemma~\ref{lem:rank_one_change} in this case.

Suppose next that $d \ge 2$. Let $\K = \F[\lambda] \cong \F[x]/\langle f \rangle$, where $\lambda \in K$ is a root
of~$f$ in~$\K$. Suppose, as well, that
\[
 f = z^d - \gamma_{d-1} z^{d-1} - \gamma_{d-1} z^{d-2} - \dots - \gamma_1 z - \gamma_0
\]
for $\gamma_{d-1}, \gamma_{d-2}, \dots, \gamma_1, \gamma_0 \in \F$.

In order to establish part~(a) of the claim in this case, recall that every matrix $A \in \matring{\F}{n}$ is similar
to a matrix in ``rational Jordan form'' --- that is, a matrix whose blocks are the companion matrices of powers
of irreducible polynomials in~$\F[z]$. In particular, $A$ is similar to such a matrix, where the first $\ell$ blocks
are the companion matrices of polynomials $f^{j_1}, f^{j_2}, \dots, f^{j_{\ell}}$, for integers
$j_1, j_2, \dots j_{\ell}$ such that
\[
 j_1 \ge j_2 \ge \dots \ge j_{\ell} \ge 1,
\]
and whose remaining blocks are the companion matrices of polynomials that are relatively prime with~$f$.

Now, if $X \in \matring{\F}{n}$ is a nonsingular matrix then the entries of $u \cdot X \in \rowspace{\F}{n}$ and
$X^{-1} \cdot v \in \columnspace{\F}{n}$ are chosen uniformly and independently from~$\F$ if the entries of
$u \in \rowspace{\F}{n}$ and $v \in \columnspace{\F}{n}$ are. Applying a similarity transformation we may
therefore assume without loss of generality that $A$ is in rational Jordan form and, in particular, has the
form described above.

Consider a uniformly selected vector $v \in \columnspace{\F}{n}$ --- noting that this can be written as
\begin{equation}
\label{eq:decomposition_of_v}
 v = \begin{bmatrix}
   v_1 \\ v_2 \\ \vdots \\ v_{\ell} \\ v_{\ell + 1}
  \end{bmatrix}
\end{equation}
where $v_i \in \columnspace{\F}{d \cdot {j_i}}$ for $1 \le i \le \ell$ and where $v_{\ell + 1} \in
\columnspace{\F}{m}$ for $m = n - d \cdot \sum_{i=1}^{\ell} j_i$. In this case, the entries of the vectors
$v_1, v_2, \dots, v_{\ell + 1}$ are selected uniformly and independently from~$\F$.

Each of the first $\ell$ blocks, $C_{f^{j_i}}- \lambda I_{d \cdot j_i}$ of $A - \lambda I_n$ (for $1 \le i \le \ell$)
has nullity one, while the remaining blocks of this matrix are nonsingular.

Let $i$ be an integer such that $1 \le i \le \ell$ and suppose that
\[
 v_i = \begin{bmatrix} \alpha_0 \\ \alpha_1 \\ \vdots \\ \alpha_{d \cdot j_i - 1} \end{bmatrix} \in
 \matgrp{\F}{d \cdot j_i}{1}.
\]
The first  $d \cdot j_i - 1$ columns of $C_{f^{j_i}} - \lambda I_{d \cdot j_i}$ are linearly independent --- indeed,
the $h^{\text{th}}$ column has the nonzero entry $-\lambda$ in position~$h$, $1$ in position $h+1$, and
zeroes everywhere else. The above vector~$v_i$ is therefore in the column space of
$C_{f^{j_i}} - \lambda I_{d \cdot j_i}$  if and only if it is a $\K$-linear combination of these columns. Applying
Gaussian Elimination (and considering entries of this vector from bottom to top) one can confirm
that this is the case if and only if
\[
 \alpha_{d \cdot j_i - 1} + \alpha_{d_{j_i} - 2} \lambda^{-1} + \dots + \alpha_1 \lambda^{-(d \cdot j_i - 2)} +
  \alpha_0 \cdot \lambda^{-(d_i \cdot j_i - 1)} = 0,
\]
which is the case if and only if $g(\lambda) = 0$ for the polynomial
\[
 g = \sum_{h=0}^{d \cdot j_i - 1} \alpha_h z^h \in \F[z].
\]
This is the case if and only if $g$ is divisible by~$f$. Since the coefficients of~$g$ are chosen uniformly
and independently from~$\F$, the probability of this is at most $|\F|^{-d}$.

Since $A - \lambda I_n$ is a block diagonal matrix with
$C_{f^{j_1}} - \lambda \cdot I_{d \cdot j_1}, C_{f^{j_2}} - \lambda \cdot  I_{d \cdot j_2}, \dots,
C_{f_{d_{j_{\ell}}} - \lambda \cdot I_{d \cdot j_{\ell}}}$ as the initial $\ell$ blocks on its diagonal, and the entries
of $v_1, v_2, \dots, v_{\ell+1}$ (and corresponding components of the vector~$u$) are chosen uniformly
and independently from~$\F$, it now follows that $v$ is in the column space of $A - \lambda I_n$ with probability at
most $|\F|^{-\ell d}$.

The matrices $A$ and~$A^T$ have the same invariant factors, and rational Jordan form.
Applying the above argument to~$A^T$, one can see that the probability that $u$ is in the row space
of $A - \lambda I_n$ is at most $|\F|^{-\ell d}$ as well.

As argued in the proof of Lemma~\ref{lem:rank_one_change}, it is necessary for $v$ to be in the column
space of $A - \lambda I_n$ and for $u$ to be in the row space of $A - \lambda I_n$ in order for the rank of
$A - \lambda I_n + v \cdot u$ to be less than that of~$A - \lambda I_n$, and the rank of $A - \lambda I_n
+ v \cdot u$ cannot be less than $n - \ell -1$.  The number of invariant factors of~$A$ therefore divisible by~$f$ is $\ell+1$
with probability at most~$|\F|^{-2 \ell d}$.
There are never more than $\ell +1$ invariant factors of this matrix that are divisible by~$f$.

Suppose, once again, that $v$ is not in the column space of $A - \lambda I_n$, so that the null space of
$A - \lambda I_n + v \cdot u$ is a subspace of the null space of $A - \lambda I_n$. It follows that the null space
of $A - \lambda I_n + v \cdot u$ is a proper subset of the null space of~$A - \lambda I_n$, so that $A + v \cdot u$
has at most $\ell - 1$ invariant factors divisible by~$f$, if and only if there exists a vector $x \in
\columnspace{\F}{n}$ such that $(A - \lambda I_n) x = 0 \not= (A - \lambda I_n + v \cdot u) x = v \cdot (u \cdot x)$.
This is the case if and only if $u \cdot x \not= 0$.

If $A$ is as described above, then it suffices to consider (as~$x$) $\ell$~vectors
\begin{equation}
\label{eq:nullspace_vectors}
\widehat{x}_1 =  \begin{bmatrix}
  x_1 \\ 0 \\ 0 \\ \vdots \\ 0 \\ 0
 \end{bmatrix}, \thinspace
\widehat{x}_2 =  \begin{bmatrix}
  0 \\ x_2 \\ 0 \\ \vdots \\ 0 \\ 0
 \end{bmatrix}, \thinspace
\widehat{x}_3 =  \begin{bmatrix}
  0 \\ 0 \\ x_3 \\ \vdots \\ 0 \\ 0
 \end{bmatrix},
 \dots, \thinspace
 \widehat{x}_{\ell} = \begin{bmatrix}
  0 \\ 0 \\ 0 \\ \vdots \\ x_{\ell} \\ 0
 \end{bmatrix}
\end{equation}
where $x_i$ is a nonzero vector in $\columnspace{\K}{d \cdot j_i}$ such that $(C_{f^{j_i}} - \lambda I_{d \cdot j_i}) x_i = 0$,
for $1 \le i \le \ell$.

Now suppose that $1 \le i \le \ell$ and that
\[
 f^{j_i} = z^{d \cdot j_i} - \zeta_{d \cdot j_i - 1} \lambda^{d \cdot j_i - 1} - \zeta_{d \cdot j_i - 2}
  \lambda^{d \cdot j_i - 2} - \dots - \zeta_1 \lambda - \zeta_0.
\]
Since $f$ is irreducible with degree $d \ge 2$, $z$ does not divide either~$f$ or $f^{j_i}$, so that
$\zeta_0 \not= 0$.

Now
\[
 (C_{f^{j_i}} - \lambda I_{d \cdot j_i}) x_i =
  \begin{bmatrix}
   -\lambda        &   & & \dots & 0 & \zeta_0 \\
   1 & -\lambda  &   & \dots & 0 & \zeta_1 \\
   0 & 1 & -\lambda & \dots & 0 & \zeta_2 \\
   \vdots & \vdots & \vdots & \ddots & \vdots & \vdots \\
   0 &  0 & 0 & \dots & -\lambda & \zeta_{d \cdot j_i - 2} \\
   0 &  0 & 0 & \dots & 1 & \zeta_{d \cdot j_i - 1} - \lambda
  \end{bmatrix}
  \cdot x_i = 0.
\]
The top left submatrix of $C_{f^{j_i}} - \lambda I_n$ with order $d \cdot j_i -1$ is nonsingular --- it is lower
triangular with the nonzero value $-\lambda$ at each diagonal position. The bottom entry of~$x_i$ must
therefore be nonzero. It now suffices to consider a vector
\[
 x_i = \begin{bmatrix}
  \lambda^{d \cdot j_i - 2} \alpha_0 \\
  \lambda^{d \cdot j_i - 3} \alpha_1 \\
  \vdots \\
  \lambda  \alpha_{d \cdot j_i - 3} \\
  \alpha_{d \cdot j_i - 2} \\
  \lambda^{d \cdot j_i - 1}
 \end{bmatrix}
\]
for $\alpha_0, \alpha_1, \dots, \alpha_{d \cdot j_i - 2} \in \K$. Examining the bottom entries in the above
vector, it is now possible to prove inductively that
\begin{align*}
\alpha_i &= \lambda^{d \cdot j_i} - \zeta_{d \cdot j_i - 1} \lambda^{d \cdot j_i - 1}
 - \zeta^{d \cdot j_i - 2} \lambda^{d \cdot j_i - 2}
 - \dots - \zeta_{i+2} \lambda^{i+2} - \zeta_{i+1} \lambda^{i+1} \\
  &= \zeta^i \lambda^i + \zeta^{i-1} \lambda^{i-1} + \dots + \zeta_1 \lambda + \zeta_0.
\end{align*}
Consequently
\[
 x_i =
 \begin{bmatrix}
   \lambda^{d \cdot j_i - 2} \zeta_0 \\
   \lambda^{d \cdot j_i - 2} \zeta_1 + \lambda^{d \cdot j_i - 3} \zeta_0 \\
  \vdots \\
   \lambda^{d \cdot j_i - 2} \zeta_{d \cdot j_i - 2}  + \dots + \lambda \zeta_1 + \zeta_0 \\
  \lambda^{d \cdot j_i - 1}
 \end{bmatrix}
\]
It follows (considering the powers of~$\lambda$ with coefficient~$\zeta_0$, above) that if the entries of
the vector $u \in \rowspace{\F}{n}$ are chosen uniformly and independently from~$\F$ then
$u \cdot \widehat{x}_i = g(\lambda)$ where $g$ is a uniformly chosen polynomial in~$\F[z]$ with degree
at most $d \cdot j_i - 1$. Consequently, since $g(\lambda) = 0$ only if $g$ is divisible by~$f$,
$(A - \lambda I_n + v \cdot u) \widehat{x}_i = 0$ with probability at most $|\F|^{-d}$.
Furthermore,
the events that $(A - \lambda I_n + v \cdot u) \widehat{x}_1 = 0, (A - \lambda I_n + v \cdot u) \widehat{x}_2 = 0,
\dots, (A - \lambda I_n + v \cdot u) \widehat{x}_{\ell} = 0$ are mutually independent, since these involve
pairwise disjoint subsets of the entries of~$u$.

It follows that the conditional probability that $A + v \cdot u$
has exactly $\ell - 1$ invariant factors divisible by~$f$, given that $v$ is not in the column space
of~$A - \lambda I_n$, is at least $(1 - |\F|^{-\ell d})$. Thus $A + v \cdot u$ has exactly $\ell - 1$ invariant factors
divisible by~$f$ with probability at least $(1 - |\F|^{-\ell d})^2$, as needed to complete the proof of part~(a)
of the claim.

In order to prove part~(b), suppose that the minimal polynomial of~$A$ is not divisible by~$f$, so that the
matrix $A - \lambda I_n$ is a nonsingular matrix in~$\matring{\K}{n}$. In this case there exist matrices
$B_0, B_1, \dots, B_{d-1} \in \matring{\F}{n}$ such that
\begin{equation}
\label{eq:A_inverse}
 (A - \lambda I_n)^{-1} = B_0 + \lambda B_1 + \lambda^2 B_2 + \dots + \lambda^{d-1} B_{d-1}.
\end{equation}
It follows that
\begin{align*}
I_n &= (A - \lambda I_n) (B_0 + \lambda B_1 + \lambda^2 B_2 + \dots + \lambda^{d-1} B_{d-1}) \\
 &= A B_0 + \gamma_0 B_{d-1} + \sum_{i=1}^{d-1} (A B_i - B_{i-1} + \gamma_i B_{d-1}) \lambda^i,
\end{align*}
so that $A B_0 + \gamma_0 B_{d-1} = I_n$ and $B_{i-1} = A B_i + \gamma_i B_{d-1}$ for $1 \le i \le d-1$. It
can now be proved inductively, using the above equations, that
\begin{equation}
\label{eq:B_(d-i)}
B_{d-i} =  A^{i-1} B_{d-1} +\sum_{j=d-i+1}^{d-1} \gamma_j A^{j-d+i-1} B_{d-1}
\end{equation}
for every integer~$i$ such that $1 \le i \le d$.

Now let $v \in \columnspace{\F}{n}$. Suppose that $f$ divides the minimal polynomial of $A + v \cdot u$, so
that the matrix $A - \lambda I_n + v \cdot u$ is singular in~$\matring{\K}{n}$.  There must exist a nonzero
vector $x \in \columnspace{\K}{n}$such that $(A - \lambda I_n + v \cdot u) x = 0$. In this case $(A - \lambda I_n) x =
-v \cdot (u \cdot x)$, so that $(A - \lambda I_n) x$ is a $\K$-linear multiple of~$v$. Since $(A - \lambda I_n
+ v \cdot u)x = 0$ if and only if $(A - \lambda I_n + v \cdot u)(\alpha x) = 0$ for any nonzero element $\alpha$
of~$\K$, it now suffices to consider the vector $x = - (A - \lambda I_n)^{-1} v$ --- so that
$u (A - \lambda I_n)^{-1} v = 1$.

Since $(A - \lambda I_n)^{-1}$ is as shown at line~\eqref{eq:A_inverse}, above, it now follows that
$u_i B_0 v = 1$ and $u_i B_i v = 0$ for $1 \le i \le d-1$. It now follows by the equation at line~\eqref{eq:B_(d-i)}
that $u A^j B_{d-1} v = 0$ for $0 \le j \le d-2$ and that $u A^{d-1} B_{d-1} v = 1$.

Consider the minimal polynomial of the matrix~$A$ and the vector $B_{d-1} v$, for $B_{d-1}$ as above --- that
is, the monic polynomial $g \in F[z]$ with least degree such that $g(A) B_{d-1} = 0 \in \columnspace{\F}{n}$.
If the degree of this polynomial is less than~$d$ then there is no vector $u \in \rowspace{\F}{n}$ such the
above conditions are satisfied --- for $A^{d-1} B_{d-1} v$ is a linear combination of $B_{d-1} v, A B_{d-1} v,
\dots, A^{d-2} B_{d-1} v$ in this case, and $u A^{d-1} B_{d-1} v = 0$ if $A^i B_{d-1} v = 0$ for $0 \le i \le d-2$.

On the other hand, if the degree of this minimal polynomial is at least~$d$ then the vectors $B_{d-1} v,
A B_{d-1} v, \dots, A^{d-1} B_{d-1} v$ are linearly independent, so that the matrix $C \in \matgrp{\F}{n}{d}$
with these vectors as columns has maximal rank~$d$. It now suffices to note that the vector
$u \in \rowspace{\F}{n}$ only satisfies the condition required above if
\[
 u C = \begin{bmatrix} 0 & 0 & \dots & 0 & 1 \end{bmatrix} \in \rowspace{\F}{d}.
\]
Since the entries of~$u$ are chosen uniformly and independently from those of~$v$ it now follows that the
probability that $(A - \lambda I_n + v \cdot u) x = 0$ --- and that $A - \lambda I_n + v \cdot u$ is singular --- is
at most $|\F|^{-d}$, establishing part~(b) of the claim.
\end{proof}

\begin{theorem}
\label{thm:condition_for_invariant_factors}
Let $A \in \matring{\F}{n}$ and let $f \in \F[z]$ be a monic irreducible polynomial with degree~$d$ such that at most $k$~invariant
factors of~$A$ are divisible by~$f$ for some positive integer~$k$.
Suppose the entries of matrices $U \in \matgrp{\F}{k}{n}$ and $V \in \matgrp{\F}{n}{k}$ are chosen
uniformly and independently from~$\F$.
\begin{enumerate}
\renewcommand{\labelenumi}{\text{\textup{(\alph{enumi})}}}
\item If exactly $k$ of the invariant factors of~$A$ are divisible by~$f$ then the probability that the minimal
      polynomial of~$A + V \cdot U$ is not divisible by~$f$ is at least
      \[
        \left( 1 - \frac{1}{|\F|^{2d} - 1} \right) (1 - |\F|^{-d})^2 = \frac{(|\F|^{2d} - 2)(|\F|^d-1)}{|\F|^{2d}(|\F|^d+1)}
        \ge 1 - 2 |\F|^{-d}.
      \] 
  
\item If the minimal polynomial of~$A$ is not divisible by~$f$ then the probability that the minimal polynomial
     of~$A + V \cdot U$ is divisible by~$f$ is at least
      \[
       \frac{|\F|^d-1}{|\F|^d} = 1 - |\F|^{-d}
       \]
       if $k = 1$, and with probability at least
      \[
         \left( 1 - \frac{|\F|^{-d}}{|\F|^d-1} \right) (1 - |\F|^{-d}) = 1 - \frac{|\F|^d+1}{|\F|^{2d}}
         > 1 - 2|\F|^{-d}
      \] 
      if $k \ge 2$.
 
 \item The minimal polynomial of $A + V \cdot U$ is not divisible by~$f$ with probability at least
  \begin{align*}
    \frac{(|\F|^{4d} - 2)(|\F|^{2d} - |\F|^d - 1)}{|\F|^{3d} (|\F|^{3d} + |\F|^{2d} + |\F|^d + 1)}
      &= 1 - \frac{2|\F|^{5d} + 2|\F|^{4d} + |\F|^{3d} + 2|\F|^{2d} - 2|\F|^d - 2}
         {|\F|^{3d}(|\F|^{3d} + |\F|^{2d} + |\F|^d + 1)} \\
      &\ge 1 - 2|\F|^{-d}.
  \end{align*}
\end{enumerate}
\end{theorem}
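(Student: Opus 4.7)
The plan is to mirror the proof of Lemma~\ref{lem:making_nonsingular} step by step, substituting the count of invariant factors of~$A$ divisible by~$f$ in place of the ``rank deficiency'' $n-r$, and invoking Lemma~\ref{lem:extended_rank_one_change} in place of Lemma~\ref{lem:rank_one_change}. The key numerical substitution is $|\F| \mapsto |\F|^d$: under the identification $\F[z]/\langle f \rangle \cong \K$ with $|\K| = |\F|^d$, the ``bad event'' that a uniformly random vector in $\columnspace{\F}{n}$ fails to separate one additional invariant factor divisible by~$f$ occurs with frequency $|\F|^{-d}$, exactly as an unwanted coincidence in a field of size $|\F|^d$ would. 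Under this substitution, every rank-vs-update tradeoff from Lemma~\ref{lem:making_nonsingular} should carry over with no structural change.

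For part~(a), write $V \cdot U = \sum_{i=1}^k v_i u_i$ where the $v_i$ and $u_i$ are the columns of~$V$ and the rows of~$U$, and define $\rho_{k,\ell}$, analogously to the proof of Lemma~\ref{lem:making_nonsingular}(a), as the probability that some nested subfamily of the rank-one summands successively reduces the count of invariant factors divisible by~$f$ from $\ell$ down to~$0$. By part~(a) of Lemma~\ref{lem:extended_rank_one_change}, a single update fails to decrement the count with probability at most $2|\F|^{-\ell d} - |\F|^{-2\ell d}$. Induction on~$\ell$ then yields the same telescoping product as before, now with $|\F|^{-(n-r)}$ replaced throughout by $|\F|^{-\ell d}$; specializing to $\ell = k$ and simplifying exactly as at line~\eqref{eq:probability_of_first_event} gives the bound claimed.

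Part~(b) is the analogue of Lemma~\ref{lem:making_nonsingular}(c): assuming the minimal polynomial of~$A$ is not divisible by~$f$, apply Lemma~\ref{lem:extended_rank_one_change}(b) to each successive rank-one update; at each step the nondivisibility is preserved with probability at least $1 - |\F|^{-d}$, and bounding the product $\prod_{i=1}^{k}(1 - |\F|^{-id})$ below exactly as was done with $|\F|^{-1}$ before yields $1 - |\F|^{-d}$ when $k=1$ and $1 - (|\F|^d+1)/|\F|^{2d}$ when $k \ge 2$.

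Part~(c) is then obtained by combining (a) and (b): if $\ell \le k$ is the actual number of invariant factors of~$A$ divisible by~$f$, condition on a subfamily of $\ell$ of the rank-one updates reducing the count to zero (the estimate from~(a) contributes a factor $1 - 1/(|\F|^{4d}-1)$ when $k > \ell$), and then apply part~(b) to the remaining $k - \ell$ updates. I expect the main obstacle to be the routine but fussy verification of the resulting closed-form polynomial inequality --- the analogue of the $F_1(z)$ and $F_2(z)$ derivative-sign computations that closed out Lemma~\ref{lem:making_nonsingular}(d) --- establishing that the two case bounds arising from $k = \ell$ versus $k > \ell$ are both dominated by the single expression stated in~(c). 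Since $|\F|^d \ge 2$ for every finite field~$\F$, the same strategy of showing that a relevant polynomial $F(z)$ in $z = |\F|^d$ has $F'(z) > 0$ for $z \ge 1$ and $F(2) > 0$ should complete the argument without new ideas.
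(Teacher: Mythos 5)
Your proposal is essentially the paper's own proof: the paper establishes this theorem by repeating the argument of Lemma~\ref{lem:making_nonsingular} verbatim, with the number of invariant factors divisible by~$f$ playing the role of the rank deficiency, Lemma~\ref{lem:extended_rank_one_change} replacing Lemma~\ref{lem:rank_one_change}, and $|\F|$ replaced throughout by~$|\F|^d$ --- exactly the substitution you describe, including the final polynomial-inequality check showing the single bound in part~(c) dominates both cases since $|\F|^d \ge 2$.
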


\begin{proof}
The proof of this result is virtually identical to the proof of Lemma~\ref{lem:making_nonsingular}. Rather
than considering the rank of a sequence of matrices, one should consider the number of invariant factors
of a sequence of matrices that are divisible by the polynomial~$f$.
Lemma~\ref{lem:extended_rank_one_change} replaces Lemma~\ref{lem:rank_one_change} in the argument,
so that $|\F|$ is consistently replaced by~$|\F|^d$ in the bounds that are being applied and derived.
\end{proof}

Theorem~\ref{thm:probable_relationship} now follows by
Theorem~\ref{thm:condition_for_nilpotent_blocks} and part~(c)
of~Theorem~\ref{thm:condition_for_invariant_factors}.

\section{Nontrivial Nilpotent Blocks}
\label{sec:nilpotent_blocks}

Recall that a nilpotent block in the Jordan form of a matrix $A \in \matring{\F}{n}$ is \textbf{\emph{nontrivial}}
if has order at least two --- so that the minimal polynomial of this block is $z^j$ for $j \ge 2$. The purpose of this
section is to establish the following.

\begin{theorem}
\label{thm:nilpotent_blocks_summary}
It is possible to decide whether a matrix $A \in \matring{\F}{n}$ has at most $k$ nontrivial nilpotent blocks,
in such a way that the incorrect decision is reached with probability at most~$\epsilon$ for any positive
constant~$\epsilon$. The cost to do so includes the selection of $\Theta(nk)$ values uniformly and
independently from~$\F$ and $\Theta(n^2 k + \mu n)$ arithmetic operations in~$\F$.

It is also possible both to certify that $A$ has at most $k$ nontrivial nilpotent blocks and to certify that $A$ has
more than $k$ nontrivial nilpotent blocks. In both cases the verifier is guaranteed to accept if the prover's
information is correct. The verifier accepts with probability at most~$\epsilon$ if the prover's information is
incorrect.

For both protocols, the expected cost for the prover to complete the protocol is dominated by the worst-case
cost for the initial decision stage, as given above. The cost for the verifier, when confirming that $A$ has at
most $k$ nontrivial nilpotent blocks, includes the selection of $\Theta(n)$ values, uniformly and independently
from~$\F$, and $\Theta(nk  + \mu)$ arithmetic operations in~$\F$. The cost for the verifier, when confirming
that $A$ has more than $k$ nontrivial nilpotent blocks, includes the selection of $\Theta(nk)$ values, uniformly
and independently from~$\F$, and $\Theta(nk + \mu)$ arithmetic operations in~$\F$.
\end{theorem}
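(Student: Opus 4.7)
The plan is to reduce all three parts of the theorem to a single random construction: sample $V \in \matgrp{\F}{n}{k}$ and $U \in \matgrp{\F}{k}{n}$ with entries drawn uniformly and independently from $\F$, form $A + V \cdot U$, and decide based on whether $z^2$ divides its minimal polynomial. The correctness of this reduction rests on combining Villard's Theorem~\ref{thm:certain_relationship} with Theorem~\ref{thm:probable_relationship}(a). Applying Villard's theorem a second time to the pair $(A + V \cdot U,\, -V \cdot U)$ (which also has rank at most~$k$) yields the symmetric relation $s_{i+k} \mid \sigma_i$, where $s_i$ and $\sigma_i$ denote the invariant factors of~$A$ and of $A + V \cdot U$ respectively. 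Thus if $A$ has at least $k+1$ invariant factors divisible by $z^2$ (equivalently, more than $k$ nontrivial nilpotent blocks), then $z^2 \mid s_{k+1} \mid \sigma_1$, forcing $z^2$ to divide the minimal polynomial of $A + V \cdot U$ for \emph{every} choice of $V$ and~$U$. Conversely, if $A$ has at most $k$ nontrivial nilpotent blocks, Theorem~\ref{thm:probable_relationship}(a) gives $z^2 \nmid \sigma_1$ with probability at least $\rho_1(q) \ge 1 - 3 q^{-1}$ over uniformly random $V$ and~$U$.

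The detection algorithm then samples $V$ and $U$, computes the minimal polynomial $p$ of $A + V \cdot U$ using Wiedemann's algorithm (which performs $\Theta(n)$ applications of $A + V \cdot U$ to vectors, each of cost $\mu + \Theta(nk)$, plus a length-$2n$ Berlekamp--Massey step), and answers ``at most $k$'' exactly when $z^2 \nmid p$. This matches the claimed $\Theta(n^2 k + \mu n)$ arithmetic cost and $\Theta(nk)$ random selections.

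For the two certification protocols I would layer the recent minimal-polynomial certification protocol of Dumas, Kaltofen, Thom\'e and Villard~\cite{dum16} on top of this reduction. To certify ``at most $k$'', the prover sends $V$, $U$, and the claimed minimal polynomial $p$ of $A + V \cdot U$ together with a DKTV certificate; the verifier checks that $z^2 \nmid p$ and runs the DKTV subprotocol. If $A$ actually has more than $k$ nontrivial nilpotent blocks, Villard's theorem forces every correct $p$ to be divisible by $z^2$, so a cheating prover must lie about $p$ and is caught with probability at least $1 - \epsilon$. To certify ``more than $k$'', the \emph{verifier} samples $V$ and $U$ (accounting for the $\Theta(nk)$ verifier random draws), the prover returns $p$ together with a DKTV certificate, and the verifier checks that $z^2 \mid p$ and runs the DKTV check; completeness is immediate from Villard, while soundness uses Theorem~\ref{thm:probable_relationship}(a) again, because if $A$ has at most $k$ nontrivial blocks then with probability at least $\rho_1(q)$ the true minimal polynomial is not divisible by $z^2$, forcing a cheating prover to submit an incorrect~$p$. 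In both protocols the verifier cost decomposes into a constant number of applications of $A + V \cdot U$ to vectors (for the DKTV subprotocol) plus an $O(n)$ divisibility check on $p$, giving the claimed $\Theta(nk + \mu)$ arithmetic operations.

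The main obstacle is the small-field regime, where the lower bound $\rho_1(q) \ge 1 - 3 q^{-1}$ is weak or vacuous when $q$ is very small. I would handle this by working throughout in an extension $\K / \F$ of constant degree $d$, chosen so that $\rho_1(q^d) \ge 1 - \epsilon/2$, or equivalently by repeating the protocol $\Theta(\log(1/\epsilon))$ times with fresh randomness and taking a majority vote. The number of nontrivial nilpotent blocks is preserved under base change (each companion block of $z^j$ over $\F$ remains a single companion block of $z^j$ over $\K$), so the test over $\K$ answers the same question, and the constant-degree extension only multiplies the running time by a constant that can be absorbed in the stated asymptotic bounds.
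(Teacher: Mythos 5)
Your reduction for the decision stage is essentially the paper's: combine Theorem~\ref{thm:certain_relationship} (if $A$ has more than $k$ nontrivial nilpotent blocks then $z^2$ divides the minimal polynomial of $A+V\cdot U$ for \emph{every} $V,U$ of rank at most $k$) with Theorem~\ref{thm:probable_relationship}(a), and decide according to whether $z^2$ divides the minimal polynomial computed by Wiedemann's algorithm. Two caveats, though. First, your ``majority vote'' amplification is wrong in exactly the regime you are trying to repair: for $q=2$ one has $\rho_1(2)=1/24$, so even when $A$ has at most $k$ nontrivial blocks the \emph{majority} of trials may well produce a minimal polynomial divisible by $z^2$. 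Because the test is one-sided (divisibility is forced in the ``more than $k$'' case), the correct rule is the paper's: run $\tau$ independent trials and answer ``at most $k$'' as soon as a single trial exhibits a minimal polynomial not divisible by $z^2$, with $\tau$ chosen via the constants $\sigma_1(q)$ so that $(1-\rho_1(q))^{\sigma_1(q)}\le 1/2$. Your alternative fix (a constant-degree extension, using that invariant factors are unchanged under base change) is workable, but the repetition route as you stated it is not. The paper also amplifies over several projection pairs $u_{i,j},v_{i,j}$ so that a $z^2$ factor of the matrix minimal polynomial is not missed by the scalar sequences; your single-run description of Wiedemann glosses over this, but it is a minor point.

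The genuine gap is in the certification protocols. You certify the full minimal polynomial of $A+V\cdot U$ with the protocol of Dumas, Kaltofen, Thom\'e and Villard, but that protocol costs the verifier $O(n\,\Mult{n}+\mu)$ field operations and $O(n\log_2 n)$ random selections, so your verifier performs $O(n\,\Mult{n}+nk+\mu)$ operations --- asymptotically more than the $\Theta(nk+\mu)$ operations and $\Theta(n)$ (respectively $\Theta(nk)$) random selections claimed in Theorem~\ref{thm:nilpotent_blocks_summary}; those larger bounds are exactly what the paper accepts only later, for the invariant-factor certificates of Theorem~\ref{thm:invariant_factors_summary}. For nilpotent blocks the paper avoids certifying the minimal polynomial altogether: to certify ``more than $k$'' the prover simply exhibits, for each verifier-chosen pair $(U_i,V_i)$, a vector $x_i$ with $(A+V_i\cdot U_i)x_i\neq 0=(A+V_i\cdot U_i)^2x_i$, which the verifier checks with a constant number of black-box products; to certify ``at most $k$'' it uses the consistency trick --- if $z^2$ does not divide the minimal polynomial of $B=A+V\cdot U$ then $B^2x=Bc$ is solvable for every $c$, while if $z^2$ divides it then for uniformly random $c$ this system is solvable with probability at most $|\F|^{-1}$ --- so the verifier only sends $b_i=Bc_i$ for random $c_i$ and checks the prover's solutions of $B^2x_i=b_i$. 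Both checks cost the verifier only $\Theta(nk+\mu)$ operations, which is what the theorem asserts. (Your soundness argument for ``more than $k$'' with a single verifier-chosen pair also needs either the field-extension device or $\tau$ verifier-chosen pairs, again because $\rho_1(q)$ is far from $1$ for small $q$.)
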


\subsection{Detection}
\label{ssec:nilpotent_block_detection}

Let $\sigma_1(2) = 17$, $\sigma_1(3) = 3$, $\sigma_1(4) = \sigma_1(5) = 2$, $\sigma_1(q) = 1$ when
$q=7$ and $\sigma_1(q) = 2$ when $q \ge 8$. It is easily checked that if $\rho_1(q)$ is as given in
Theorem~\ref{thm:probable_relationship}, for every prime power~$q$, then $(1 - \rho_1(q))^{\sigma_1(q)} \le
1/2$ when $2 \le q \le 7$ and $(1 - \rho_1(q))^{\sigma_1(q)} \le q^{-1}$ when $q \ge 8$.

In order to check whether a black-box matrix $A \in \matring{\F}{n}$ has at most $k$ nontrivial
invariant factors, when $\F = \F_q$, and to ensure that the probability of an incorrect decision is at most a
positive constant~$\epsilon$, the prover should select $\tau$ pairs of matrices $U_i \in \matgrp{\F}{k}{n}$
and $V_i \in \matgrp{\F}{n}{k}$, for $1 \le i \le \tau$, by choosing the entries of these matrices uniformly
and independently from~$\F$ --- where $\tau = \lceil \log_2 (2 \cdot \epsilon^{-1}) \rceil \cdot \sigma_1(q)$ if
$2 \le q \le 7$ and $\tau = \lceil \log_q (2 \cdot \epsilon^{-1}) \rceil \cdot \sigma_1(q)$ if $q \ge 8$.

Suppose first that $A$ has more than $k$ nontrivial nilpotent blocks, so that the $k+1^{\text{st}}$ invariant factor of~$A$
is divisible by~$z^2$. Then it follows by Theorem~\ref{thm:certain_relationship}, above, that the minimal
polynomial of $A + V_i \cdot U_i$ is divisible by~$z^2$ for all~$i$.

For every such matrix it is easily checked, in this case, that if $u_{i, j}, v_{i, j} \in \columnspace{\F}{n}$ are
chosen uniformly and independently from $\columnspace{\F}{n}$ then the minimal polynomial of the linearly
recurrent sequence
\begin{equation}
\label{eq:Krylov_sequences}
 u_{i, j}^T v_{i, j},\, u_{i, j}^T (A + V_i \cdot U_i) v_{i, j},\, u_{i, j}^T (A +V_i \cdot U_i)^2 v_{i, j},\,  \dots
\end{equation}
is also divisible by~$z^2$ with probability at least $(1 - 1/q)^2$ --- which is equal to $1/4$ if $q = 2$, and
greater than $1 - 2/q$ if $q \ge 3$. Consequently, if $\lambda = \lceil \log_{4/3} (2  \cdot \tau \cdot \epsilon^{-1})
\rceil$ when $q = 2$, and $\lambda = \lceil \log_{q/2} (2 \cdot \tau \cdot \epsilon^{-1}) \rceil$ when $q \ge 3$, and
pairs of vectors $u_{i, j}$ and~$v_{i, j}$ are chosen uniformly and independently from~$\columnspace{\F}{n}$, 
for $1 \le i \le \tau$ and $1 \le j \le \lambda$, then, for each~$i$, the probability there is no integer~$j$ such that
$1 \le j \le \lambda$, and the minimal polynomial of the linear recurrence at line~\eqref{eq:Krylov_sequences}
is divisible by~$z^2$, is at most $\epsilon/(2\tau)$. The probability that it has not been confirmed that the
minimal polynomial of $A + V_i \cdot U_i$ is divisible by~$z^2$, for all~$i$ such that $1 \le i \le \tau$, is
therefore certainly at most~$\epsilon/2 < \epsilon$ in this case.

It follows that --- for fixed~$q$ and~$\epsilon$ --- the number of applications of Wiedemann's algorithm needed
to compute  the minimal polynomials of sequences as above and confirm the above condition, with the desired
reliability, is a constant.

On the other hand, a straightforward calculation (involving $\tau$, as given above) and an application of
Theorem~\ref{thm:probable_relationship}(a) establishes that if $A$ has at most $k$ nontrivial invariant factors then
the minimal polynomial of at least one matrix $A + V_i \cdot U_i$, such that $1 \le i \le \tau$, is \emph{not}
divisible by~$z^2$ with probability at least $1 - \epsilon/2$.

One should again try to compute the minimal polynomial of each matrix $A + V_i \cdot U_i$ by
computing the minimal polynomials of linearly recurrent sequences of the form shown at
line~\eqref{eq:Krylov_sequences} for $\lambda$ uniformly and independently pairs of vectors
$u_{i, j}, v_{i, j} \in \columnspace{F}{n}$.

If $1 \le i \le \tau$ and the minimal polynomial of $A + V_i \cdot U_i$ is
not divisible by~$z^2$, then the minimal polynomial of the linear recurrent sequence shown at
line~\eqref{eq:Krylov_sequences} is not divisible by~$z^2$, either, for any~$j$ that $1 \le j \le \lambda$.

On the other hand, it follows by the above analysis that there will exist an integer~$j$ such that $1 \le j \le
\lambda$ and the minimal polynomial of the above linearly recurrent sequence is divisible by~$z^2$, for
every integer~$i$ such that $1 \le i \le \tau$ and the minimal polynomial of $A + V_i \cdot U_i$ is divisible
by~$z^2$, with probability at least $1 - \epsilon/2$.

In this case a pair of matrices $U \in
\matgrp{\F}{k}{n}$ and~$V \in \matgrp{\F}{n}{k}$, such that the minimal polynomial of~$A + V \cdot U$ is not
divisible by~$z^2$, can be selected by choosing any one of the pairs of matrices $U_i$ and~$V_i$
that have not been eliminated using the above process. The probability that either this case has not been
correctly identified, or a pair of matrices~$U$ and~$V$ as described above has not been correctly
selected, is at most~$\epsilon$.

Since the cost to multiply $A + V \cdot U$ by a vector~$v \in \columnspace{\F}{n}$, includes the cost, $\mu$,
to multiply~$A$ by a vector, along with $\Theta(nk)$ additional operations, it is straightforward to modify the
analysis of Wiedemann's algorithm in order to conclude that the cost of the above process includes the uniform
and independent selection of $\Theta(nk)$ values from~$\F$, along with $\Theta(n^2 k + \mu n)$ arithmetic
operations in~$\F$, as claimed.

\subsection{Few Nilpotent Blocks: Certification and Verification}
\label{ssec:few_nilpotent_blocks}

If the prover has determined that $A$ has at most $k$ nontrivial nilpotent blocks, as described above,
then the prover should \textbf{\emph{commit}} by sending matrices $U \in \matgrp{\F}{k}{n}$ and $V \in
\matgrp{\F}{n}{k}$, such that the minimal polynomial of $A + V \cdot U$ is not divisible by~$z^2$, to the verifier:
These have now been obtained.

It is easily checked, by consideration of a rational Jordan form, that if the minimal polynomial of a matrix $B 
\in \matring{\F}{n}$ is not divisible by~$z^2$, then a system $Bx = b$ is consistent (for a given vector $b \in
\columnspace{\F}{n}$), if and only if the system $B^2 x = b$ is consistent as well. On the other hand, if the
minimal polynomial of~$B$ is divisible by~$z^2$ and a vector $c \in \matring{\F}{n}$ is selected uniformly
and independently,  then the probability that the system $B^2 x = Bc$ is consistent is at most $|\F|^{-1}$.

The verifier may therefore form a \textbf{\emph{challenge}} by selecting $\gamma = \lceil \log_q \epsilon^{-1}\rceil$ vectors $c_1, c_2, \dots, c_\gamma$ uniformly and independently
from~$\columnspace{\F}{n}$ (for $q = |\F|)$, computing $b_i = (A + V \cdot U) c_i$ for $1 \le i \le \gamma$,
and sending~$b_1, b_2, \dots, b_{\gamma}$ to the prover. 

The prover should compute vectors $x_1, x_2, \dots, x_{\gamma} \in \columnspace{\F}{n}$ such that
$(A + V \cdot U)^2 x_i = b_i$ (possibly by applying Wiedemann's algorithm twice, for each~$i$) and send
these to the verifier. Finally, the verifier should check whether the required equalities are satisfied ---
\textbf{\emph{accepting}} if they are, and \textbf{\emph{rejecting}} otherwise.

If the prover's information is correct then the verifier accepts with certainty; otherwise the verifier accepts,
incorrectly, with probability at most $\epsilon$. The cost to the prover, to complete this protocol, is dominated
by the cost of the ``detection'' stage described above. Since the verifier must only choose $\Theta(n)$~values
uniformly and independently from~$\F$ and multiply a constant number of vectors by $A + V \cdot U$, the
number of operations used by the verifier is as claimed.

\subsection{Many Nilpotent Blocks: Certification and Verification}
\label{ssec:many_nilpotent_blocks}

If the prover has determined, instead, that $A$ has more than $k$ nontrivial nilpotent blocks, then the prover
should \textbf{\emph{commit}} by advising the verifier of this.

The verifier should then select matrices $U_i \in \matgrp{\F}{k}{n}$ and $V_i \in \matgrp{\F}{n}{k}$ for $1 \le i
\le \tau$, for $\tau$ as above, by selecting the entries of these matrices uniformly and independently from~$\F$.
These matrices should then be sent to the prover as a \textbf{\emph{challenge}}.

In \textbf{\emph{response}} the prover should return vectors $x_i \in \columnspace{\F}{n}$ such that
$(A + V_i \cdot U_i) x_i \not= 0 = (A + V_i \cdot U_i)^2 x_i = 0$ for $1 \le i \le \tau$. The verifier should then
\textbf{\emph{accept}} if these conditions are all satisfied and \textbf{\emph{reject}} otherwise.

Once again, Theorem~\ref{thm:certain_relationship} can be used to establish that this protocol is perfectly
complete --- the verifier accepts with certainty if the prover's information is correct.
Theorem~\ref{thm:probable_relationship} and a straightforward calculation establishes that it is also sound:
If the prover's information is incorrect then the probability that the verifier accepts is at most~$\epsilon$.

In order to see that the additional cost for the prover is low, recall that the \textbf{\emph{minimal polynomial}} of
a matrix~$B \in \matring{\F}{n}$ and vector~$v \in \columnspace{\F}{n}$ is the monic polynomial $g \in \F[z]$
with least degree such that $g(B) v = 0$. Wiedemann's algorithm can be used to compute the minimal
polynomial of~$A + V_i \cdot U_i$ and a vector~$v$, as the least common multiple of a number of linear
recurrent sequences as shown at line~\eqref{eq:Krylov_sequences}, with $v = v_{i, j}$ and uniform and
independent choices of the vector~$u_{i, j}$. A small number of choices of~$u_{i, j}$ suffice to ensure that the
minimal polynomial of~$(A + V_i \cdot U_i) v$ has been discovered with high probability. Furthermore, if
vectors $v_{i, j} \in \columnspace{\F}{n}$ are chosen uniformly and independently from~$\columnspace{\F}{n}$,
for $j = 1, 2, \dots$, and the minimal polynomial of $A + V_i \cdot U_i$ is divisible by~$z^2$,
then the expected number of vectors~$v_{i, j}$ that must be considered, before a vector~$v = v_{i, j}$ is found
such that the minimal polynomial of~$A + V_i \cdot U_i$ and~$v$ is also divisible by~$z^2$, is at most two.

Suppose now that a vector~$v \in \columnspace{\F}{n}$ has been discovered and it has been confirmed that
the minimal polynomial of $A + V_i \cdot U_i$ and~$v$ is $z^2 g$ for some polynomial $g \in \F[z]$. It suffices
to compute and return the vector $x_i = g(A + V_i) v$ in order to satisfy the requirements given above.

The expected cost for the prover to complete this protocol is, once again, dominated by the worst-case cost
of the ``detection'' stage. The cost for the verifier includes the selection of $\Theta(nk)$ values uniformly and
independently from~$\F$ and a constant number of multiplications of $A + V_i \cdot U_i$ by vectors, for
matrices $U_i \in \matgrp{\F}{k}{n}$ and $V_i \in \matgrp{\F}{n}{k}$ --- establishing the above claim.

\section{Nontrivial Invariant Factors}
\label{sec:invariant_factors}

The purpose of this section is to establish the following.

\begin{theorem}
\label{thm:invariant_factors_summary}
One can decide whether a matrix $A \in \matring{\F}{n}$ has at most $k$ nontrivial invariant factors, such 
that the incorrect decision is made with probability at most~$\epsilon$ for any positive constant~$\epsilon$.
The expected cost of this includes the selection of $\Theta(nk)$ values uniformly and independently from~$\F$
and $\Theta(n^2 k + \mu n)$ arithmetic operations in~$\F$.

It is also possible both to certify that $A$ has at most $k$ nontrivial invariant factors and to certify that $A$ has
more than $k$ nontrivial invariant factors. In both cases the verifier is guaranteed to accept if the prover's
information is correct. The verifier accepts with probability at most~$\epsilon$ if the prover's information is
incorrect.

For both protocols, the expected cost for the prover to complete the protocol is in \[ O(n^2 \Mult{n} +
n^2 k \log_2 n + \mu n \log_2 n ), \] where $\Mult{n}$ is the number of operations in~$\F$ required for an
arithmetic operation in a extension~$\text{\textup{\textsf{E}}}$ with degree in~$O(\log_2 n)$ over~$\F$.
When certifying that $A$ has at most $k$ nontrivial invariant factors,  the verifier selects $O(n \log_2 n)$ values
uniformly and independently from~$\F$ and performs $\Theta(n \Mult{n} + nk + \mu)$ arithmetic operations
in~$\F$. When certifying that $A$ has more than $k$ nontrivial invariant factors the verifier selects
$O(n \log_2 n + nk)$ values uniformly and independently from~$\F$ and performs $O(n \Mult{n} + nk + \mu)$
arithmetic operations in~$\F$.
\end{theorem}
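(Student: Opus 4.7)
The plan is to mirror Section~\ref{sec:nilpotent_blocks}, with Theorem~\ref{thm:probable_relationship}(b) playing the role of part~(a), and with the Dumas--Kaltofen--Thom\'e--Villard (DKTV) protocol for certifying the minimal polynomial of a black-box matrix as the main new computational ingredient.  DKTV accounts for the $\Mult{n}$ factors in the stated cost and motivates the use of a degree-$\Theta(\log_q n)$ extension~$\textsf{E}$ of~$\F$ from which random irreducible polynomials can be drawn.

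For detection, the prover samples $\tau$ independent pairs $(V_i, U_i) \in \matgrp{\F}{n}{k} \times \matgrp{\F}{k}{n}$ with entries uniform in~$\F$ and, for each~$i$, applies Wiedemann's algorithm to the black box $A + V_i \cdot U_i$ (each matrix-vector product costing $\mu + \Theta(nk)$ operations) to compute a candidate minimal polynomial~$\mu_i$.  If $A$ has more than~$k$ nontrivial invariant factors, its $(k+1)$-st invariant factor~$s_{k+1}$ is neither~$1$ nor~$z$; hence either $z^2 \mid s_{k+1}$, or some irreducible $f^\star \neq z$ divides~$s_{k+1}$.  Applying Theorem~\ref{thm:certain_relationship} with the roles of~$A$ and~$A + V_i \cdot U_i$ interchanged (so that $s_{i+k} \mid \sigma_i$) shows that the corresponding divisor ($z^2$ or~$f^\star$) must divide every~$\mu_i$ deterministically.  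Conversely, if $A$ has at most~$k$ nontrivial invariant factors, Theorem~\ref{thm:probable_relationship} bounds the per-trial failure probability by $2 q^{-1}$ (the $z^2$ check) and~$2 q^{-d}$ (a fixed non-$z$ irreducible~$f$ of degree~$d$).  Combining the $z^2$-divisibility test of Section~\ref{ssec:nilpotent_block_detection} with a non-$z$ test based on a persistent non-$z$ factor common to several~$\mu_i$, and choosing~$\tau$ analogously to Section~\ref{ssec:nilpotent_block_detection}, drives the total failure probability below~$\epsilon$.

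Certification for ``at most~$k$'' has the prover commit to~$(V, U)$ together with the claimed~$\mu$; the verifier runs DKTV to certify~$\mu$ and then checks the divisibility conditions used during detection.  Certification for ``more than~$k$'' mirrors Section~\ref{ssec:many_nilpotent_blocks}: the verifier issues random $(V_i, U_i)$ as challenges, and for each~$i$ the prover returns a DKTV-certified~$\mu_i$ along with either a nilpotent-block style witness exhibiting $z^2 \mid \mu_i$, or an explicit non-$z$ irreducible~$f^\star$ together with a Krylov-type vector witnessing $f^\star \mid \mu_i$.  Standard analyses of Wiedemann's algorithm, the DKTV protocol, and the rank-$k$ updates to the black box then deliver all cost bounds in the statement.

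The principal obstacle will be the non-$z$ component of the test: Theorem~\ref{thm:probable_relationship}(b) controls only a single fixed irreducible~$f$, while the ``bad'' factor~$f^\star$ depends on~$A$ and is not known to the verifier in advance.  The plan is for the prover to extract~$f^\star$ from a non-$z$ common divisor of several observed~$\mu_i$ --- which must exist whenever~$A$ has more than~$k$ nontrivial invariant factors of the non-$z$ type --- and forward~$f^\star$ along with the certificate, so that Theorem~\ref{thm:probable_relationship}(b) can be invoked with this explicit~$f^\star$; tuning~$\tau$ to accommodate all possible degrees $d = \deg f^\star \le n$ then completes the argument and yields the stated probabilistic guarantees.
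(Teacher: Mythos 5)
Your overall architecture matches the paper's: reduce the $z^2$ part to the nilpotent-block test, use Theorem~\ref{thm:certain_relationship} with the roles of $A$ and $A+V_iU_i$ interchanged so that $\varphi_{k+1}$ divides every perturbed minimal polynomial deterministically, use Theorem~\ref{thm:probable_relationship}(b) for the converse direction, and use the Dumas--Kaltofen--Thom\'e--Villard protocol to certify minimal polynomials. But there are two genuine gaps. First, the obstacle you flag at the end --- that Theorem~\ref{thm:probable_relationship}(b) controls one \emph{fixed} irreducible $f$ while the bad factor is data-dependent --- is not resolved by ``tuning $\tau$ to accommodate all possible degrees.'' The event to be excluded is that \emph{some} irreducible $f\neq z$ not dividing $\varphi_{k+1}$ survives in the observed minimal polynomials, and a union bound over the roughly $q^d/d$ monic irreducibles of degree $d$ against the per-polynomial bound $2q^{-d}$ gives $\sum_d (q^d/d)\cdot 2q^{-d}$, which does not converge: a single perturbation per trial fails no matter how many independent trials you amplify with, because each trial's minimal polynomial will typically carry spurious factors. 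The paper's fix is structural, not a parameter choice: each trial takes the \emph{gcd of the minimal polynomials of $c$ independent perturbations}, so the per-degree contribution becomes $(q^d/d)(2q^{-d})^c$, summable for $c\ge 2$; the constant $c$ must grow as $q$ shrinks ($c=4$ for $q=3$, $c=3$ for $4\le q\le 7$, $c=2$ for $q\ge 8$), and for $q=2$ even this fails for degrees one and two, so $z+1$ and $z^2+z+1$ are handled by separate iterated sub-protocols in the style of Section~\ref{ssec:nilpotent_block_detection} and only degrees $\ge 3$ by the union bound. None of this bookkeeping is optional; it is what keeps the detection cost at $\Theta(n^2k+\mu n)$ for constant $\epsilon$.

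Second, your ``few invariant factors'' certificate --- commit one pair $(V,U)$ with a DKTV-certified $\mu$ and ``check the divisibility conditions used during detection'' --- cannot work: since $\varphi_{k+1}$ divides $\mathrm{minpol}(A+VU)$ for \emph{every} choice of $V,U$, a single certified minimal polynomial carries no information distinguishing $\varphi_{k+1}\in\{1,z\}$ from any other divisor, and the verifier cannot rerun the detection-stage gcd analysis cheaply. The paper's certificate consists of $c+1$ committed pairs, their minimal polynomials $f_0,\dots,f_c$, \emph{and} B\'ezout cofactors $g_0,\dots,g_c$ with $g_0f_0+\dots+g_cf_c=\varphi_{k+1}\in\{1,z\}$ (equation~\eqref{eq:gcd}); the verifier checks this polynomial identity and checks the $f_i$ against DKTV-certified minimal polynomials of randomly projected sequences, and soundness follows because each true $f_i$ is a multiple of $\varphi_{k+1}$, forcing $\varphi_{k+1}\mid z$. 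Your ``many invariant factors'' protocol, by contrast, is essentially the paper's (the prover forwards the explicit factor $\chi=f^\star$, the verifier issues random $(U_i,V_i)$ challenges, the prover exhibits projected sequences witnessing $\chi\mid\mathrm{minpol}(A+V_iU_i)$, certified via DKTV), so that part of your plan is on track.
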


\subsection{Detection}
\label{ssec:invariant_factors_detection}

The $k+1^{\text{st}}$ invariant factor~$\varphi_{k+1}$ of~$A$ is divisible by~$z^2$ if and only if~$A$ has more
than $k$ nontrivial nilpotent blocks. The process described in Subsection~\ref{ssec:nilpotent_block_detection}
should be applied to check this, with parameters chosen to ensure that the probability of failure is at most
$\epsilon/2$.

If $\varphi_{k+1}$ is not divisible by~$z^2$ then a pair of matrices $U_0 \in \matgrp{\F}{k}{n}$ and $V_0 \in
\matgrp{\F}{n}{k}$ have been found such that the minimal polynomial~$f_0 \in \F[z]$ of $A + V_0 \cdot U_0$ is
not divisible by~$z^2$ --- and $f_0$ has been correctly computed --- with probability at least $1- \epsilon/2$.
The detection stage should proceed with an attempt to compute a factor $\chi \not= z$ of~$\varphi_{k+1}$ with
positive degree, along with a certificate of this factor --- or to determine that no such factor exists.

Suppose first that $q = |\F| = 2$. In this case a straightforward variant of the protocol described in
Subsection~\ref{ssec:nilpotent_block_detection} can be used either to conclude that $\varphi_{k+1}$ is divisible
by~$z+1$ or to obtain matrices~$U_1 \in \matgrp{\F}{k}{n}$ and $V_1 \in \matgrp{\F}{n}{k}$ such that
the minimal polynomial~$f_1 \in\F[z]$ of $A + V_1 \cdot U_1$ is not divisible by~$z+1$. Suppose that this is
carried out in such a way that the probability of failure is at most~$\epsilon/6$ --- so that $U_1$, $V_1$
and~$f_1$ have also been obtained with probability at least $1 - \epsilon/6$ as well in the second case. If
$\sigma_2(2, 1) = 6$ and $\rho_2(q, d)$ is as shown in part~(b) of Theorem~\ref{thm:probable_relationship}
then $(1 - \rho_2(2, 1))^{\sigma_2(2, 1)} < \frac{1}{2}$. It therefore suffices to choose $\tau_2(2, 1) =
\lceil \log_2 (12/\epsilon) \rceil \cdot \sigma_2(2, 1)$ pairs of matrices $\widehat{U}_i \in \matgrp{\F}{k}{n}$
and $\widehat{V}_i \in \matgrp{\F}{n}{k}$, for $1 \le i \le \tau_2(2, 1)$, in order to ensure that the minimal
polynomial of $A + \widehat{V}_i \cdot \widehat{U}_i$ is not divisible by~$z+1$, for at least one of these pairs
of matrices~$\widehat{U}_i$ and~$\widehat{V}_i$, with probability at least $1 - \epsilon/12$, if $\varphi_{k+1}$
is not divisible by~$z+1$. If $\lambda_2(2, 1) = \lceil \log_{4/3} (12 \cdot \tau_2(2, 1) \cdot \epsilon^{-1}) \rceil$
then a consideration of the minimal polynomials of $\lambda_2(2, 1)$ linearly recurrent sequences with the form
shown at line~\eqref{eq:Krylov_sequences}, for $1 \le i \le \tau_2(2, 1)$, suffice to ensure that it has been
correctly discovered whether $z+1$ divides the minimal polynomial of $A + \widehat{V}_i \cdot \widehat{U}_i$,
for all of these matrices, with probability at least $1 - \epsilon/12$ --- as is necessary and sufficient here.

If it has been determined that $\varphi_{k+1}$ is not divisible by~$z+1$, then a variant of the above protocol
should be applied, once again, either to conclude that $\varphi_{k+1}$ is divisible by $z^2 + z + 1$ (the only
monic irreducible polynomial in~$\F[z]$ with degree two)  or to obtain matrices $U_2 \in \matgrp{\F}{k}{n}$ and
$V_2 \in \matgrp{\F}{n}{k}$ such that the minimal polynomial~$f_2 \in \F[x]$  of $A + V_2 \cdot U_2$ is not
divisible by \mbox{$z^2 + z +1$}. Suppose, as above, that this is carried out in such a way that the probability of
failure (including failure to compute $U_2$, $V_2$ and~$f_2$) is at most $\epsilon/6$. Now, if
$\rho_2(q, d)$ is as given in part~(b) of Theorem~\ref{thm:probable_relationship} then
$1 - \rho_2(2, 2) < \frac{1}{2}$. It therefore suffices to choose $\tau_2(2, 2) = \lceil \log_2 (12/\epsilon) \rceil$
pairs of matrices $\widehat{U}_i \in \matgrp{\F}{k}{n}$ and $\widehat{V}_i \in \matgrp{\F}{n}{k}$, for
$1 \le i \le \tau_2(2, 2)$, in order to ensure that the minimal polynomial of $A + \widehat{V}_i \cdot \widehat{U}_i$
is not divisible by~$z^2 + z + 1$, for at least one of these pairs of matrices $\widehat{U}_i$ and~$\widehat{V}_i$,
with probability at least $1 - \epsilon/12$ if $\varphi_{k+1}$ is not divisible by~$z^2 + z + 1$.
 
Suppose now that $\widehat{U}_i \in \matgrp{\F}{k}{n}$ and $\widehat{V}_i \in \matgrp{\F}{n}{k}$ such that the
minimal polynomial of \mbox{$A + V_i \cdot U_i$} is divisible by~$z^2 + z + 1$. Then, if vectors $u_{i, j}, v_{i, j}$
are chosen uniformly and independently from~$\columnspace{\F}{n}$, then the minimal polynomial of the linear
recurrence resembling that shown at line~\eqref{eq:Krylov_sequences} (with $\widehat{U}_i$
and~$\widehat{V}_i$ replacing $U_i$ and~$V_i$, respectively) is divisible by~$z^2 + z + 1$ with probability at
least $(1 - \frac{1}{4})^2 = \frac{9}{16}$. Consequently, if $\lambda_2(2, 2) = \lceil \log_{16/7} (12 \cdot
\tau_2(2, 2) \cdot \epsilon^{-1}) \rceil$, then a consideration of $\lambda_2(2, 2)$ linearly recurrent sequences
resembling the one at line~\eqref{eq:Krylov_sequences}, for $1 \le i \le \tau_2(2, 2)$, suffices to ensure that
it has been correctly discovered whether $z^2 + z + 1$ divides the minimal polynomial of~$A + \widehat{V} \cdot
\widehat{U}$, for all of these matrices, with probability at least $1 - \epsilon/12$ --- as is necessary and
sufficient, here, once again.

Suppose, now, that it has been determined that $\varphi_{k+1}$ is not divisible by $z^2 + z + 1$ either.
Suppose that an additional two pairs of matrices $U_i \in \matgrp{\F}{k}{n}$ and $V_i \in \matgrp{\F}{n}{k}$ are
selected uniformly and independently, for $3 \le i \le  4$. Then, since $1 - \rho_2(2, d) \le 2^{1-d}$ for $d \ge 3$,
and there are only two monic irreducible polynomials in~$\F[z]$ with degree three, while there are at most
$q^d/d = 2^d/d$  monic irreducible polynomials with degree~$d$ in~$\F[z]$ when $d \ge 4$, it follows that if
$f_i$ is the minimal polynomial of~$A + V_i \cdot U_i$, for $3 \le i \le 4$, then the probability that
$\gcd(f_3, f_4)$ has a monic irreducible factor, with degree at least three, that is not also a factor
of~$\varphi_{k+1}$, is at most
\begin{align*}
 2 &\times (1 -\rho_2(2, 3))^2 + \sum_{d \ge 4} \left( \frac{2^d}{d} \right) \cdot (1 - \rho_2(2, d))^2 \\
  &\le 2 \times \left( \frac{1}{4} \right)^2 + \sum_{d \ge 4} \left( \frac{2^d}{d}\right) \cdot \left( \frac{2}{2^d} \right)^2
   \tag{by the bounds in part~(b) of Theorem~\ref{thm:probable_relationship}} \\
   &\le \frac{1}{8} + \sum_{d \ge 4} \left( \frac{2^d}{4} \right) \cdot \left( \frac{2}{2^d} \right)^2 \\
    &= \frac{1}{8} + \sum_{d \ge 4} 2^{-d} \\
    &= \frac{1}{8} + \frac{1}{8} \\
    &= \frac{1}{4}.
\end{align*}

Consequently  if one chooses matrices $U_i \in \matgrp{\F}{k}{n}$ and $V_i \in \matgrp{\F}{n}{k}$ uniformly
and independently, for $3 \le i \le \tau + 2$, instead,  where $\tau = 2 \lceil \log_4 (12 \epsilon^{-1} \rceil$, and
$f_i$ is the minimal polynomial of $A + V_i \cdot U_i$ for all such~$i$, then $\gcd(f_3, f_4, \dots,
f_{\tau + 2})$ has an irreducible factor, with degree at most three, that is not also a factor of~$\varphi_{k+1}$,
with probability at most $\epsilon/12$. Indeed, two pairs of matrices $U_i \in \matgrp{\F}{k}{n}$ and
$V_i \in \matgrp{\F}{n}{k}$ will have been identified, with probability at least $1 - \epsilon/12$, such that if $f_i$ is
the minimal polynomial of~$U + V_i \cdot U_i$, for $3 \le i \le 4$ and $f_0$, $f_1$ and~$f_2$ are as above, then
the squarefree part of $\gcd(f_0, f_1, f_2, f_3, f_4)$ is a divisor of~$\varphi_{k+1}$.

As discussed in Subsection~\ref{ssec:nilpotent_block_detection}, and above, it is possible to ensure that each
of the above minimal polynomials $f_i$ of $A = V_i \cdot U_i$ is computed in such a way that the probability of
failure here is also at most $\epsilon/12$, by computing the minimal polynomials of $\Theta(\log_2
(\epsilon^{-1}))$ linearly recurrent sequences as shown at line~\eqref{eq:Krylov_sequences}. At this point,
either a divisor of~$\varphi_{k+1}$ with positive degree that is different from~$z$ has been identified, or
matrices~$U_i \in \matgrp{\F}{k}{n}$, $V_i \in \matgrp{\F}{n}{k}$, and the minimal polynomials $f_i \in \F[z]$ of
$A + V_i \cdot U_i$ have been identified, for $0 \le i \le 4$, such that $\gcd(f_0, f_1, f_2, f_3, f_4) =
\varphi_{k+1} \in \{1, z\}$. The probability of failure of this process is at most~$\epsilon$, and (for
fixed~$\epsilon$) the prover has selected $\Theta(nk)$ values uniformly and independently from~$\F$, and
performed $\Theta(n^2 k + \mu n)$ arithmetic operations in~$\F$.

Suppose next that $q \ge 3$. In this case one should begin, once again, by applying the process described in
Section~\ref{sec:nilpotent_blocks} to determine whether $A$ has more than $k$ nontrivial nilpotent blocks, in such a way
that this process fails with probability at most $\epsilon/2$ --- and in such a way that a pair of matrices $U_0 \in
\matgrp{\F}{k}{n}$ and $V_0 \in \matgrp{\F}{n}{k}$ has been discovered such that the minimal polynomial~$f_0$
of $A + V_0 \cdot U_0$ is not divisible by~$z^2$ --- and $f_0$ has been computed --- if the process has not
failed and $A$ has at most $k$ nontrivial nilpotent blocks.

Suppose now that $A$ has at most $k$ nontrivial nilpotent blocks, so that it is necessary to check whether
$\varphi_{k+1}$ has a monic irreducible factor in~$\F[z]$ that is different from~$z$. Let $c$ be a positive
integer, greater than or equal to two, and suppose that $c$ pairs of matrices $U_i \in \matring{\F}{k}{n}$ and
$V_i \in \matring{\F}{n}{k}$ are chosen uniformly and independently, for $1 \le i \le c$.

Since there are $q-1$ monic irreducible polynomials with degree in~$\F[z]$ with degree one, it follows by
part~(b) of Theorem~\ref{thm:probable_relationship} that $\gcd(f_1, f_2, \dots, f_c)$ has a monic irreducible
factor that is not also a factor of~$\varphi_{k+1}$ with probability at most
\[
 g_1(q, c) = (q-1) \cdot (1 - \rho_2(q, 1))^c.
\]

Since there are at most $\frac{q^d}{d}$ monic irreducible polynomials with degree~$d$ in~$\F[x]$, for $d \ge 2$, it
also follows that $\gcd(f_1, f_2, \dots, f_c)$ has a monic quadratic irreducible factor in~$\F[z]$ that is not also
a factor of~$\varphi_{k+1}$ with probability at most
\[
 g_2(q, c) = \frac{q^2}{2} \cdot (1 - \rho_2(q, 2))^c,
\]
and a monic cubic irreducible factor in~$\F[z]$ that is not a factor of~$\varphi_{k+1}$ with probability at most
\[
 g_3(q, c) = \frac{q^3}{3} \cdot (1 - \rho_2(q, 3))^c.
\]

Finally, the probability that $\gcd(f_1, f_2, \dots, f_c)$ has a monic irreducible factor in~$\F[z]$ with degree at least four,
that is not also a factor of~$\varphi_{k+1}$, is at most
\begin{align*}
 \sum_{d \ge 4} \frac{q^d}{d} \cdot (1 - \rho_2(q, d))^c
  &\le \sum_{d \ge 4} \frac{q^d}{4} \cdot (1 - \rho_2(q, d))^c \\
  &\le \sum_{d \ge 4} \frac{q^d}{4} \cdot \left( \frac{2}{q^d} \right)^c \\
  &= 2^{c-2} \sum_{d \ge 4} (q^{1-c})^d \\
  &= 2^{c-2} \frac{q^{4(1-c)}}{1 - q^{1-c}} \\
  &= 2^{c-2} \frac{q^{3(1-c)}}{q^{c-1}-1} = g_4(q, c).
\end{align*}

Consequently $\gcd(f_1, f_2, \dots, f_c)$ has a monic irreducible factor in~$\F[z]$, different from~$z$, that is not
also a factor of~$\varphi_{k+1}$, is at most
\[
 F(q, c) = g_1(q, c) + g_2(q, c) + g_3(q, c) + g_4(q, c)
\]
for the functions $g_1$, $g_2$, $g_3$ and~$g_4$ that are given above.

Straightforward calculations, aided by the use of a computer algebra system, confirm that $F(3, 4) < \frac{1}{2}
< F(3, 3)$, so that one can set $c=4$ when $q=3$ in order to ensure that the above process succeeds with
probability at least $\frac{1}{2}$ when $q=3$. Similarly, $F(q, 3) < \frac{1}{2} < F(q, 2)$, so that one can set
$c=3$, when $4 \le q \le 7$. Finally, $F(q, 2) < \frac{1}{2}$, so that one can set $c = 2$, when $q \ge 8$.

While it seems necessary to set $c=2$ for large field sizes as well, the probability of failure drops (for $c = 2$)
as $q = |\F|$ increases. Indeed, using the fact that $(1 - \rho_2(q, d)) \le 2 q^{-d}$ for $d \ge 1$, it is
straightforward to establish that $F(q, 2) \le 4 q^{-1}$ when $q \ge 5$.

As for the case that $q = 2$, $\Theta(\log_2 (\epsilon^{-1})$ independent trials should be used, in order to
ensure that a set of~$c$ pairs of matrices and corresponding minimal polynomials $f_1, f_2, \dots, f_c$ have
been found, so that the squarefree part of $\gcd(f_0, f_1, \dots, f_c)$ is a divisor of~$\varphi_{k+1}$ with
probability at least $1 - \epsilon/4$. Sufficiently many linear recurrences of the form shown at
line~\eqref{eq:Krylov_sequences} should be considered to ensure that all minimal polynomials of matrices
$A + V_i \cdot U_i$ have been correctly computed, with probability at least $1 - \epsilon/4$
as well, in order to ensure that this ``detection'' process fails with probability at most~$\epsilon$.

% Additional information is useful if it has been decided that $\varphi_{k+1} \in \{ 1, z \}$. First, it is useful to have
% a pair of vectors $u_i, v_i \in \columnspace{\F}{n}$, such that the minimal polynomial of the linearly recurrent
% sequence
% \begin{equation}
% \label{eq:new_Krylov_sequence}
%  u_i^T v_i,\, u_i^T (A + V_i \cdot U_i) v_i,\, u_i^T (A + V_i \cdot U_i)^2 v_i,\, \dots
% \end{equation}
% is equal to the minimal polynomial~$f_i$ of~$A + V_i \cdot U_i$, for all~$i$ as above. These vectors can be
% generated as~$f_i$ is being computed, with an expected  cost dominated by the cost to complete the above
% process --- see, for example, Eberly~\cite{ebe00} for details.

If it has been determined that $A$ has at most $k$ nontrivial invariant factors, so that $c+1$ pairs of matrices
$U_i \in \matgrp{\F}{k}{n}$ and $V_I \in \matgrp{\F}{n}{k}$, and corresponding minimal polynomials $f_i \in \F[z]$
have been accumulated, such that $\gcd(f_0, f_1, \dots, f_c) = \varphi_{k+1} \in \{1, z\}$, then it will also be
useful to compute polynomials $g_0, g_1, \dots, g_c \in \F[z]$ such that
\begin{equation}
\label{eq:gcd}
 g_0 f_0 + g_1 f_1 + \dots + g_c f_c = \varphi_{k+1}.
\end{equation}
Since $c$ is a constant, for any field size~$q$, the extended Euclidean algorithm can be applied to compute
these polynomials, such that each polynomial~$g_i$ has degree in~$O(n)$, at a cost that is dominated by
the cost of the rest of this process.

\subsection{Few Invariant Factors: Certification and Verification}

In order to \textbf{\emph{commit}} that $A \in \matring{\F}{n}$ has at most $k$ invariant factors, the prover
should send a sequence of $c+1$ pairs of matrices $U_i \in \matgrp{\F}{k}{n}$ and $V_i \in \matgrp{\F}{n}{k}$,
for $0 \le i \le c$, along with
\begin{itemize}
\item the minimal polynomial $f_i \in \F[z]$ of $A + V_i \cdot U_i$,
% \item vectors $u_i, v_i \in \columnspace{\F}{n}$ such that $f_i$ is also the minimal polynomial of the
%      linearly recurrent sequence shown at line~\eqref{eq:new_Krylov_sequence}, and
\item polynomials $g_i \in \F[z]$, for $0 \le i \le c$, each with degree in~$O(n)$, such that the
     equation at line~\eqref{eq:gcd} is also satisfied, where $\varphi_{k+1} \in \{ 1, z \}$.
\end{itemize}

As a \textbf{\emph{challenge}}, the verifier should select
\(
    \tau_3 = \lceil \log_{(2q^{-1}- q^{-2})^{-1}} (2(c+1) \tau_3 \epsilon^{-1}) \rceil
\)
pairs of vectors $u_i, v_i$ uniformly and independently from~$\columnspace{\F}{n}$, for $1 \le i \le \tau_3$,
and send these to the prover. The prover should then compute the minimal polynomial of each linearly recurrent
sequence
\begin{equation}
\label{eq:more_minimal_polynomials}
 u_i^T \cdot v_i,\, u_i^T (A + V_j \cdot U_j) \cdot v_i,\, u_i^T (A + V_j \cdot U_j)^2 v_i,\, \dots
\end{equation}
for $1 \le i \le \tau_3$ and $0 \le j \le c$, and should interact with the verifier to certify each of these
minimal polynomials --- choosing parameters in order to ensure that the verifier would discover an
incorrect minimal polynomial with probability at least $1 - \epsilon/(2 (c+1) \tau_3)$ if the prover tried to
provide one.

If the verifier can confirm that the minimal polynomial of each of the sequences shown at
line~\eqref{eq:more_minimal_polynomials} is divisible by the minimal polynomial~$f_i$ supplied by
the prover, for $0 \le i \le c$ and $1 \le j \le \tau_3$ and, furthermore, can confirm that
\[
 g_0 f_0 + g_1 f_1 + \dots g_c f_c \in \{ z, 1 \},
\]
then the verifier should \textbf{\emph{accept}}. Otherwise, the verifier should \textbf{\emph{reject}}.

Dumas, Kaltofen, Thom\'{e} and Villard~\cite{dum16} are primarily concerned with certification of the minimal
polynomial of a matrix over a large field, namely a field~$\F$ such that $|\F| \in \Omega(n)$. However their
observations about  small field computations can be extended, in a straightforward way, to establish the
following.

\begin{theorem}[Dumas, et. al.] It is possible for a prover to compute the minimal polynomial of a given
matrix $A \in \matring{\F}{n}$ and certify it using an interactive protocol that is perfectly complete and sound:
An incorrect minimal polynomial is accepted with probability at most~$\epsilon$ for any desired positive
constant~$\epsilon$. The prover selects $\Theta(n \log_2 n)$ values uniformly and independently from~$\F$
and performs $\Theta(n^2 \Mult{n} + \mu n \log_2 n)$ additional operations in~$\F$ while participating in this
process, where $\Mult{n}$ is the number of operations in~$\F$ required for an arithmetic operation in a
field extension whose degree over~$\F$ is logarithmic in~$n$. The verifier selects $O(n \log_2 n)$ values
uniformly and independently from~$\F$ and performs $O(n \Mult{n} + \mu)$ additional operations in~$\F$.
\end{theorem}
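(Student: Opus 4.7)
The plan is to adapt the interactive minimal-polynomial certification protocol of Dumas, Kaltofen, Thom\'{e} and Villard~\cite{dum16}, which was designed for ground fields of size $\Omega(n)$, to the small-field case by relocating the whole protocol into a field extension $\mathsf{E}$ of $\F$ whose degree $d$ over $\F$ lies in $\Theta(\log_2 n)$, chosen so that $|\mathsf{E}| = |\F|^d \ge n^c$ for a constant~$c$ large enough that a soundness error of $O(|\mathsf{E}|^{-1})$ per round can be amplified to any desired $\epsilon$ by a constant number of independent trials. Since $A \in \matring{\F}{n}$ has entries in $\F \subseteq \mathsf{E}$, the minimal polynomial of~$A$ is the same whether $A$ is regarded as a matrix over~$\F$ or as a matrix over~$\mathsf{E}$, so it suffices to certify the minimal polynomial of~$A$ over~$\mathsf{E}$.

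My first step is computation and commitment. The prover runs Wiedemann's algorithm over $\mathsf{E}$: sample $u, v \in \columnspace{\mathsf{E}}{n}$ uniformly, form the scalar Krylov sequence $u^T A^i v$ for $0 \le i < 2n$ using $O(n)$ black-box products of~$A$ with $\mathsf{E}$-vectors, and apply Berlekamp--Massey. Because $|\mathsf{E}|$ is polynomially large in~$n$, the usual Wiedemann analysis gives success probability $1 - O(n/|\mathsf{E}|)$ per trial, so a constant number of independent trials recovers a candidate minimal polynomial $f$ with probability at least $1 - \epsilon/3$. The prover then observes that $f \in \F[z]$ and commits to~$f$, together with a witness pair $(u_0, v_0) \in \rowspace{\mathsf{E}}{n} \times \columnspace{\mathsf{E}}{n}$ whose Krylov sequence $u_0^T A^i v_0$ has $f$ as its minimal polynomial.

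The verification has two halves. The \emph{divisibility check} certifies $f(A) = 0$, and the \emph{minimality check} certifies that no proper divisor of~$f$ annihilates~$A$. Both reduce, via the DKTV machinery, to a constant number of interactive certifications of the minimal polynomial of a scalar linearly recurrent sequence; the minimality half uses the sequence $u_0^T A^i v_0$, whose minimal polynomial coinciding with~$f$ forces the minimal polynomial of~$A$ to be a multiple of~$f$, while the divisibility half forces it to be a divisor of~$f$. The soundness analysis of~\cite{dum16} depends only on the size of the randomness space, so drawing all verifier randomness from $\mathsf{E}$ gives each round a failure probability of $O(|\mathsf{E}|^{-1}) = O(n^{-c})$, and a constant number of repetitions reduces this to~$\epsilon$.

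Two routine bookkeeping observations then yield the claimed costs. A prover-side black-box product $A w$ with $w \in \columnspace{\mathsf{E}}{n}$ expands along a basis of~$\mathsf{E}$ over~$\F$ into $d$ products of~$A$ with $\F$-vectors, at a total cost of $\mu d \in O(\mu \log_2 n)$ operations in~$\F$, and each arithmetic operation in~$\mathsf{E}$ costs $\Mult{n}$ operations in~$\F$. The prover's $O(n)$ $\mathsf{E}$-valued black-box calls and $O(n^2)$ $\mathsf{E}$-operations therefore total $O(n^2 \Mult{n} + \mu n \log_2 n)$ operations in~$\F$, with $\Theta(n)$ samples from~$\mathsf{E}$ corresponding to $\Theta(n \log_2 n)$ samples from~$\F$. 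The main obstacle is keeping the verifier's black-box budget at $O(\mu)$ rather than the $\Omega(\mu n \log_2 n)$ that a naive Horner-style evaluation of $f(A) v$ would consume; this is precisely what the DKTV batching achieves, by certifying the full length-$2n$ Krylov sequence with only $O(1)$ black-box probes and $O(n)$ arithmetic operations in~$\mathsf{E}$, giving the verifier cost $O(n \Mult{n} + \mu)$ claimed in the statement.
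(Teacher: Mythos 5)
The paper never actually proves this theorem---it is imported from Dumas, Kaltofen, Thom\'{e} and Villard with only the remark that their small-field observations extend ``in a straightforward way,'' the intended extension being precisely what you describe: run their sequence-certification protocol over an extension of~$\F$ of degree $\Theta(\log_2 n)$, which is where the $\Mult{n}$ factors and the $\mu n \log_2 n$ prover term come from, so your sketch follows essentially the same (intended) approach and is consistent with the stated costs. The one soft spot, which your accounting shares with the paper's own statement, is charging the verifier only $O(\mu)$ for its constant number of black-box products even though those products are with vectors over the extension field and hence cost $O(\mu \log_2 n)$ operations in~$\F$ when $\mu$ is much larger than~$n$.
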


Soundness and perfect completeness of this protocol are easily established, provided that the protocol of
Dumas, Kaltofen, Thom\'{e} and Villard is used to ensure that any incorrect minimal polynomial of a linearly
recurrent sequence as shown at line~\eqref{eq:more_minimal_polynomials} would be detected by the
verifier with probability at least $1 - \frac{\epsilon}{2(c+1)\tau_3}$. The costs for the prover and
verifier, given in Theorem~\ref{thm:invariant_factors_summary}, follow from the fact that the cost to multiply 
$A + V_i \cdot U_i$ by a vector is in $\Theta(\mu + nk)$.

\subsection{Many Invariant Factors: Certification and\\ Verification}

In this case, the prover has detected a factor $\chi \in \F[z]$ of~$\varphi_{k+1}$ that is different from~$1$ or~$z$;
the prover should now \textbf{\emph{commit}} to this protocol by sending~$\chi$ to the verifier. If $\chi$ is
divisible by~$z^2$ then verifier should interact with the prover in order to confirm that $A$ has more
than $k$ nontrivial nilpotent blocks, as described in Section~\ref{sec:nilpotent_blocks}.

Otherwise, the verifier
should choose matrices $U_i \in \matgrp{\F}{k}{n}$ and~$V_i \in \matgrp{\F}{n}{k}$ uniformly and independently,
for $0 \le i \le \widetilde{\tau}-1$, where $\widetilde{\tau} = \lceil \log_{(1 - \rho_2(q, 1))^{-1}} ( 2 \epsilon^{-1})
\rceil$, and send these to the prover as a \textbf{\emph{challenge}}: If $\varphi_{k+1}$ is not divisible
by~$\chi$ then there will exist at least one pair of matrices $U \in \matgrp{\F}{k}{n}$ and
$V_i \in \matgrp{\F}{n}{k}$, such that the minimal polynomial of $A + V_i \cdot U_i$ is also not divisible
by~$\chi$, with probability at least $1 - \frac{\epsilon}{2}$.

Consequently, as a \textbf{\emph{response}} the prover should return vectors $u_i, v_i \in \columnspace{\F}{n}$
such that the
minimal polynomial of the linear recurrence 
\[
u_i^T v_i,\, u_i^T (A + V_i \cdot U_i) v_i,\, u_i^T (A + V_i \cdot U_i)^2 v_i,\, \dots
\]
is~$\chi$, for $0 \le i \le \widetilde{\tau}-1$. These can be obtained at sufficiently by following a process described, for
example, by Eberly~\cite{ebe00}, to compute vectors $u_i$ and~$v_i$ such the minimal polynomial of the above
linearly recurrent sequence is the minimal polynomial~$f_i$ of $A + V_i \cdot U_i$, and then replacing $v_i$
with $(f_i/\chi)(A + V_i \cdot U_i) v_i$.

The prover and verifier should apply the protocol of Dumas, Kaltofen, Thom\'{e}
and~Villard once again, in order to confirm this --- ensuring that any incorrect pair of vectors is accepted with 
probability at most $\epsilon/(2\widetilde{\tau})$, so that the total probability of failure is at most~$\epsilon$,
once again.

\section{Additional Problems}
\label{sec:additional_work}

Protocols certifying that a matrix $A \in \matring{\F}{n}$ is \emph{not} banded, with band width~$k$, or that the
rank (or displacement rank, for the types of this discussed in Section~\ref{sec:displacement_rank}) of a given
matrix exceeds~$k$, are also easily described as straightforward variants of those given in this report.

Additional properties allowing other ``superfast'' algorithms to be used might also be efficiently detected. For
example, one might be able to detect some cases when \textbf{\emph{nested dissection}} can be applied. The
detection of \textbf{\emph{Vandermonde-like}} and \textbf{\emph{Cauchy-like}} matrices might be of interest ---
and also might be more challenging than that of detecting Toeplitz-like matrices: The operator matrices for
Vandermonde-like and Cauchy-like matrices are defined using diagonal matrices whose entries can vary, and
one would need to discover these diagonal matrices as part of a detection process.

This report has focussed on the case where $k$ is extremely small. However, various ``superfast'' algorithms
would still be superior to a black box algorithm for larger~$k$ --- for example, for $k \le \sqrt{n}$. Detection and
conversion protocols that are effective, for larger~$k$, might therefore be of interest.

Protocols such that the parameter~$k$ is not selected by a client (or verifier), but is instead discovered by the
service provider (or prover), can be generally be obtained by modifying the protocols in this report in a
straightforward way.

Finally, black box algorithms are also used for various integer matrix computations, including computations
involving the \textbf{\emph{Smith form}} of a matrices. Protocols to decide and certify whether the number of
nontrivial elementary divisors of a given integer matrix would therefore be of interest.

\bibliographystyle{plain}

\end{document}